    \renewcommand*{\bm}[1]{#1}%
\numberwithin{equation}{section}
\def\int{\mbox{\rm int}}
\def\And{\mbox{\rm ~and~}}
\def\({\mbox{\rm (}}\def\){\mbox{\rm )}}
\newcommand{\Rmnum}[1]{\expandafter\@slowromancap\romannumeral #1@}
\newtheorem{theorem}{Theorem}[section]
\newaliascnt{lemma}{theorem}
\newtheorem{lemma}[lemma]{Lemma}
\newaliascnt{proposition}{theorem}
\newtheorem{proposition}[proposition]{Proposition}
\newaliascnt{fact}{theorem}
\newaliascnt{definition}{theorem}
\newtheorem{definition}[definition]{Definition}
\newaliascnt{conjecture}{theorem}
\newaliascnt{corollary}{theorem}
\newtheorem{corollary}[corollary]{Corollary}
\newaliascnt{claim}{theorem}
\newaliascnt{problem}{theorem}
\newaliascnt{question}{theorem}
\newaliascnt{remark}{theorem}
\newaliascnt{example}{theorem}
\newtheorem{example}[example]{Example}
\newaliascnt{notation}{theorem}
\begin{document}
\begin{center}
{\Large\bf
Characteristic polynomials of semimatroids and their connections to matroids, hyperplane arrangements and graph colorings}\\[7pt]
\end{center}
\begin{center}
Houshan Fu\\[5pt]
School of Mathematics and Information Science\\
 Guangzhou University\\
Guangzhou 510006, Guangdong, P. R. China\\[5pt]
Email: fuhoushan@gzhu.edu.cn\\[15pt]
\end{center}
\begin{abstract}
We primarily investigate the properties of characteristic polynomials of semimatroids. In particular, we provide a combinatorial interpretation of their coefficients, generalizing the Whitney's Broken Circuit Theorem. We also prove that the unsigned coefficients of the characteristic polynomial form a unimodal and log-concave sequence, extending the Rota-Heron-Welsh Conjecture to semimatroids. Furthermore, we present convolution identities for the multiplicative characteristic and Tutte polynomials of semimatroids using the M\"obius conjugation. Finally, motivated by Kochol's work, we introduce assigning matroids to establish connections among semimatroids, hyperplane arrangements, and graph colorings,  with a particular focus on their characteristic polynomials.
\vspace{1ex}\\
\noindent{\bf Keywords:} semimatroid, characteristic polynomial, Tutte polynomial, matroid, hyperplane arrangement
\vspace{1ex}\\
{\bf Mathematics Subject Classifications:} 05B35, 05C31, 52C35
\end{abstract}
\section{Introduction}\label{Sec1}
Chromatic polynomials or characteristic polynomials, and Tutte polynomials are the most important and extensively studied polynomial invariants in the fields of graph theory, matroid theory, hyperplane arrangements and semimatroids. The origin of the Tutte polynomial can be traced back to the chromatic polynomial, which is a graph polynomial arising from questions in graph coloring \cite{Tutte1954}. In order to solve the long-standing Four Colour Problem, the chromatic polynomial of planar graphs was first proposed by Birkhoff \cite{Birkhoff1912} in 1912, and later extended to general graphs by Whitney \cite{Whitney1932}. Subsequently, Whitney \cite{Whitney1935} introduced the concept of matroids to capture abstract properties of dependence common to graphs and vectors. In 1964, Rota \cite{Rota1964} defined the characteristic polynomial of matroids as an extension of chromatic polynomials. Further extending this work, Crapo \cite{Crapo1969} introduced the Tutte polynomial of a matroid, which serves as a generalization of the characteristic polynomial. 

In 2007, Ardila \cite{Ardila2007} extended the concepts of characteristic and Tutte polynomials to semimatroids. A semimatroid is an abstract structure that captures the properties of affine hyperplane arrangements, generalizing the concept of matroids. This was created by Wachs and Walker \cite{Wachs-Walker1986} in terms of the ``geometric lattice" of closed sets, and later developed a comprehensive theory in terms of subsets of a finite set by Ardila \cite{Ardila2007} (also discovered independently by Kawahara \cite{Kawahara2004}). Ardila's work has centered on analyzing the properties of the Tutte polynomials of semimatroids.
Specifically, Ardila established a deletion-contraction formula for the Tutte polynomials of semimatroids in \cite[Proposition 8.2]{Ardila2007}. He further showed that the coefficients $t^is^j$ of the Tutte polynomial of a semimatroid are nonnegative integers and correspond to the number of certain bases with internal activity $i$ and external activity $j$ in \cite[Theorem 9.5]{Ardila2007}, which mirrors the well-known results for matroids given by Crapo \cite{Crapo1969}. However, he overlooked the study of characteristic polynomials of semimatroids.

Our initial motivation is to address the previously overlooked study of characteristic polynomials of semimatroids. In \autoref{Sec3-2}, we characterize the characteristic polynomials of semimatroids through various methods, in analogy with matroids. The exploration of the coefficients of characteristic polynomials of matroids is a fascinating topic. Whitney \cite{Whitney1932,Whitney1932-1} pioneered this field by introducing a combinatorial interpretation for the coefficients of chromatic polynomials via the concept of broken circuits, now famously known as the Whitney's Broken Circuit Theorem. Three decades later, Rota \cite{Rota1964} extended this result to matroids, a discovery that Brylawski \cite{Brylawski1977} independently achieved. Inspired by their work, we provide a similar combinatorial interpretation for the unsigned coefficients of the characteristic polynomials of semimatroids in \autoref{Generalized-Broken-Circuit-Theorem}.

Furthermore, the investigation of the unimodality and log-concavity of the coefficients of  characteristic polynomials has become a central theme in both graph theory and matroid theory. More precisely, the unimodality of the sequence of unsigned coefficients of the characteristic polynomial of a loopless matroid was implicit in Rota \cite{Rota1971} and explicit in Heron \cite{Heron1972}, as a generalization of an earlier graph-theoretic conjecture of Read \cite{Read1968}. Subsequently, Welsh \cite{Welsh1976} conjectured that this sequence is log-concave. This is commonly known as the Rota-Heron-Welsh Conjecture and generalizes Hoggar's conjecture \cite{Hoggar1974} for chromatic polynomials. These conjectures have been confirmed by Huh et al. \cite{AHE2018,Huh2012,Huh-Katz2012}. Building on previous work, we show that the sequence of unsigned coefficients of the characteristic polynomial of a semimatroid is unimodal and log-concave in \autoref{Semi-Log-Concave}.

Another prominent subfield in graph theory and matroid theory focuses on a variety of convolution formulae associated with multiplicative chromatic polynomials, multiplicative characteristic polynomials and Tutte polynomials. Multiplicative identities for chromatic polynomials were first proposed by Tutte \cite{Tutte1967}. After more than two decades, Kook, Reiner and Stanton \cite{KRS1999} established a convolution formula for the Tutte polynomial of matroids using incidence algebra methods. This result was also independently discovered by Etienne and Las Vergnas \cite{EL1998}. Subsequently, Kung presented a number of convolution-multiplication identities for multiplicative characteristic polynomials and Tutte polynomials of  graphs and matroids in \cite{Kung2004,Kung2010}. Since then, this topic has been substantially refined and generalized \cite{BL2017, DFM2018, Reiner1999}. Recently, Wang \cite{Wang2015} introduced the concept of M\"obius conjugation of posets as a unified method to reprove previous convolution formulae, and first provided a convolution formula for multiplicative characteristic polynomials of hyperplane arrangements. A natural question arises: how can those convolution identities be extended to semimatroids?  In \autoref{Sec4}, we further explore this question using the M\"obius conjugation of posets.

Further motivation comes from the desire to establish connections among semimatroids, hyperplane arrangements and graph colorings. To this end, we introduce the concept of assigning matroids, which originates from Kochol's work. In 2022, Kochol \cite{Kochol2022} introduced assigning polynomials counting nowhere-zero chains in graphs---nonhomogeneous analogues of nowhere-zero flows (called $(A,b)$-flows in \cite{LY2006}). This is a generalization of the classical flow polynomial introduced by Tutte \cite{Tutte1954}. Subsequently, Kochol \cite{Kochol2024} extended this approach for a regular matroid to enumerate nowhere-zero chains associated with homomorphisms and zero-one assignings from its circuits to $\{0,1\}$. An {\em assigning matroid} is an ordered pair $(M,\alpha)$ consisting of an arbitrary matroid $M$ and its {\em assigning} $\alpha$ that is a mapping from circuits of $M$ to the set $\{0,1\}$. 

We show that every semimatroid can induce an assigning matroid in \autoref{Semimatroid-Inverse}. This allows us to use the assigning matroid as a bridge connecting semimatroids to hyperplane arrangements and graph colorings. For more details, refer to \autoref{Sec5} for hyperplane arrangements, and to \autoref{Sec6} for graph colorings.

The paper is organized as follows. In \autoref{Sec3}, we mainly introduce basic definitions regarding semimatroids and assigning matroids, and explore properties of the characteristic polynomials of semimatroids as well as establish a relationship between semimatroids and assigning matroids. \autoref{Sec4} is devoted to examining the convolution identities for the multiplicative characteristic and Tutte polynomials of semimatroids. Applying this to hyperplane arrangements, \autoref{Sec5} provides a convolution formula for their Tutte polynomials; and revisits the classification problem of parallel translations of a hyperplane arrangement based on relevant properties of  semimatroids and assigning matroids. In \autoref{Sec6}, our primary objective is to investigate applications of assigning matroids to assigning graphs, with a particular focus on graph colorings. 
\section{Semimatroids}\label{Sec3}
This section is concerned with two central goals. One is to explore the properties of characteristic polynomials of semimatroids. The other is to study the connection between semimatroids and assigning matroids.
\subsection{Characteristic polynomials of semimatroids}\label{Sec3-2}
In this subsection, we focus on studying the properties of characteristic polynomials of semimatroids. For this purpose, let us first review the necessary definitions and results connecting semimatroids that will be needed later. For further reading on this topic, we refer the reader to the literature \cite{Ardila2007}.

A {\em semimatroid} is a triple $(E,\mathcal{C},r_{\mathcal{C}})$ consisting of a finite set $E$, a nonempty simplicial complex $\mathcal{C}$ on $E$, and a function $r_\mathcal{C}:\mathcal{C}\to\mathbb{N}$, satisfying the following five properties:
\begin{itemize}[leftmargin=1.4cm]
\item[{\rm (SR1)}] If $X\in\mathcal{C}$, then $0\le r_{\mathcal{C}}(X)\le|X|$.
\item[{\rm (SR2)}] If $X,Y\in\mathcal{C}$ and $X\subseteq Y$, then $r_{\mathcal{C}}(X)\le r_{\mathcal{C}}(Y)$.
\item[{\rm (SR3)}] If $X,Y\in\mathcal{C}$ and $X\cup Y\in\mathcal{C}$, then 
\[
r_{\mathcal{C}}(X\cap Y)+r_{\mathcal{C}}(X\cup Y)\le r_{\mathcal{C}}(X)+r_{\mathcal{C}}(Y).
\]
\item[{\rm (SR4)}] If $X,Y\in\mathcal{C}$ and $r_{\mathcal{C}}(X)=r_{\mathcal{C}}(X\cap Y)$, then $X\cup Y\in\mathcal{C}$.
\item[{\rm (SR5)}] If $X,Y\in\mathcal{C}$ and $r_{\mathcal{C}}(X)<r_{\mathcal{C}}(Y)$, then $X\cup e\in\mathcal{C}$ for some $e\in Y-X$.
\end{itemize}
Every member in $\mathcal{C}$ is referred to as a {\em central set} of the semimatroid $(E,\mathcal{C},r_{\mathcal{C}})$. $E$, $\mathcal{C}$ and $r_{\mathcal{C}}$ are called the {\em ground set}, the {\em collection of central sets} and the {\em rank function} of $(E,\mathcal{C},r_{\mathcal{C}})$, respectively. We often write $(E,\mathcal{C},r_{\mathcal{C}})$ simply as $\mathcal{C}$ when its ground set and rank function are clear. In addition, all the maximal sets in $\mathcal{C}$ have the same rank, which is denoted by $r(\mathcal{C})$ and called the {\em rank}  of the semimatroid $\mathcal{C}$. A set $X\in\mathcal{C}$ is {\em independent} if $r_{\mathcal{C}}(X)=|X|$, and {\em dependent} otherwise. Denoted by $\mathscr{I}(\mathcal{C})$ the collection of all independent sets of $\mathcal{C}$. Moreover, a maximal independent set and a minimal dependent set of $\mathcal{C}$ are  referred to as a {\em basis} and a {\em circuit} in turn. In particular, an element $e\in\mathcal{C}$ is called a {\em bridge} if $e$ is contained in every basis, and called a {\em loop} of $\mathcal{C}$ if $e$ is a circuit (i.e., $r_\mathcal{C}(e)=0$). We use the notation $\mathscr{C}(\mathcal{C})$ to represent the set of circuits of $\mathcal{C}$. The {\em closure} of $X$ in $\mathcal{C}$ is defined as
\[
{\rm cl}_{\mathcal{C}}(X):=\big\{x\in E\mid X\cup e\in\mathcal{C},\;r_{\mathcal{C}}(X\cup e)=r_{\mathcal{C}}(X)\big\}.
\]
Particularly, a member $X\in\mathcal{C}$ is a {\em flat} of  the semimatroid $\mathcal{C}$ if ${\rm cl}_{\mathcal{C}}(X)=X$. Furthermore, the poset $\mathscr{L}(\mathcal{C})$, which consists of all flats of $\mathcal{C}$ ordered by set inclusion, forms a geometric semilattice, as shown in \cite[Theorem 6.4]{Ardila2007}. In fact, ${\rm cl}_{\mathcal{C}}(X)$ is the unique maximal element in $\mathcal{C}$ containing $X$ with rank $r_{\mathcal{C}}(X)$, as shown in the proof of {\rm (CLR1)} in \cite[Proposition 2.4]{Ardila2007}.

In this paper, we shall work with a fixed semimatroid $(E,\mathcal{C},r_{\mathcal{C}})$. Let $X\in\mathcal{C}$. The {\em restriction} $\mathcal{C}|X$ of $\mathcal{C}$ to $X$ or the {\em deletion} $\mathcal{C}\backslash (E-X)$ from $\mathcal{C}$, is a semimatroid $(X,\mathcal{C}|X,r_{\mathcal{C}|X})$ on the ground set $X$ with 
\begin{equation}\label{Deletion1}
\mathcal{C}|X=\mathcal{C}\backslash (E-X):=\{Y\subseteq X\mid Y\in\mathcal{C}\}
\end{equation}
and the rank function
\begin{equation}\label{Deletion-Rank-Semi}
r_{\mathcal{C}|X}(Y):=r_{\mathcal{C}}(Y),\quad\forall \;Y\in\mathcal{C}|X.
\end{equation}
The {\em contraction} $\mathcal{C}/X$ is a semimatroid $(E-X,\mathcal{C}/X,r_{\mathcal{C}/X})$ on the ground set $E-X$ with
\begin{equation}\label{Contraction1}
\mathcal{C}/X:=\{Y\subseteq E-X\mid Y\sqcup X\in\mathcal{C}\}
\end{equation}
and the rank function
\begin{equation}\label{Contraction-Rank-Semi}
r_{\mathcal{C}/X}(Y):=r_{\mathcal{C}}(Y\sqcup X)-r_{\mathcal{C}}(X),\quad\forall \;Y\in\mathcal{C}/X.
\end{equation}

The {\em Tutte polynomial} of $\mathcal{C}$ introduced by Ardila \cite[Definition 8.1]{Ardila2007}, is defined as
\[
T(\mathcal{C};t,s):=\sum_{X\in\mathcal{C}}(t-1)^{r(\mathcal{C})-r_{\mathcal{C}}(X)}(s-1)^{|X|-r_{\mathcal{C}}(X)}.
\]
In particular, define $T(\mathcal{C};t,s):=1$ when $E=\emptyset$. Ardila also defined the {\em characteristic polynomial} of $\mathcal{C}$ by
\[
\chi(\mathcal{C};t):=\sum_{X\in\mathcal{C}}(-1)^{|X|}t^{r(\mathcal{C})-r_{\mathcal{C}}(X)}.
\]
Likewise, define $\chi(\mathcal{C};t)=1$ for $E=\emptyset$. Obviously, $T(\mathcal{C};t,s)$ is a generalization of  $\chi(\mathcal{C};t)$ related by 
\[
\chi(\mathcal{C};t)=(-1)^{r(\mathcal{C})}T(\mathcal{C};1-t,0).
\]
Analogous to the Tutte polynomial of matroids, \cite[Proposition 8.2]{Ardila2007} demonstrated that the Tutte polynomial of semimatroids also satisfies the following deletion-contraction rule:
\begin{equation*}
T(\mathcal{C};t,s)=
\begin{cases}
sT(\mathcal{C}/e;t,s),&\text{if $e$ is a loop};\\
tT(\mathcal{C}\backslash e;t,s),&\text{if $e$ is a bridge};\\
T(\mathcal{C}\backslash e;t,s)+T(\mathcal{C}/e;t,s),&\text{if $e$ is neither a bridge nor a loop and $e\in\mathcal{C}$};\\
T(\mathcal{C}\backslash e;t,s),&\text{if $e\in E$ and $e\notin\mathcal{C}$}.
\end{cases}
\end{equation*}

We assume that readers are familiar with the basics of matroids. The matroid terminology follows Oxley's book \cite{Oxley2011}. Let $M$ be a matroid. We use notations $E(M)$, $\mathscr{L}(M)$, $\mathscr{I}(M)$, $\mathscr{C}(M)$ and $r_M$ to denote the ground set, the poset consisting of its flats, the set of all its independent sets, the set of all its circuits and its rank function, respectively.

It is well known that an alternative characterization of the characteristic polynomial $\chi(M,t)$ of a loopless matroid $M$ is associated with the M\"obius function of $\mathscr{L}(M)$ (see \cite{Rota1964}). Next, we aim to provide a similar formula for the characteristic polynomial of semimatroids. We still have some work to do before we achieve this.

Poset terminology can refer to Stanley's book \cite{Stanley2012}. Let $P$ be a poset. Define the {\em M\"obius function} $\mu_P=\mu:P\times P\to \mathbb{Z}$ as follows:
\[
\mu(X,Y)=\begin{cases}
1,&\text{if $X=Y\in P$};\\
-\sum_{X\le Z<Y}\mu(X,Z),&\text{if $X,Z,Y\in P$ and $X<Y$};\\
0,&\text{otherwise}.
\end{cases}
\]
For simplicity, we denote by $\mu(X):=-\sum_{Z<X}\mu(Z)$ if $X,Z\in P$. The following essential lemma provides the characterization of the M\"obius function $\mu$ of $\mathscr{L}(\mathcal{C})$.
 
\begin{lemma}\label{Mobius-Function}
Let $(E,\mathcal{C},r_\mathcal{C})$ be a semimatroid. For any $X,Y\in\mathscr{L}(\mathcal{C})$ with $X\subseteq Y$, let $\mathcal{C}(X,Y):=\big\{Z\in \mathcal{C}\mid X\subseteq Z\subseteq Y\And {\rm cl}_\mathcal{C}(Z)=Y\big\}$. Then
\[
\mu(X,Y)=\sum_{Z\in\mathcal{C}(X,Y)}(-1)^{|Z-X|}.
\]
\end{lemma}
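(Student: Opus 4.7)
The plan is to set $\nu(X,Y):=\sum_{Z\in\mathcal{C}(X,Y)}(-1)^{|Z-X|}$ for $X\le Y$ in $\mathscr{L}(\mathcal{C})$ and to show that $\nu$ coincides with the Möbius function $\mu$ of $\mathscr{L}(\mathcal{C})$. By the uniqueness of the Möbius function, it suffices to verify that $\nu$ satisfies the defining recurrence
$$
\sum_{Y'\in[X,Y]\cap\mathscr{L}(\mathcal{C})}\nu(X,Y')=\delta_{X,Y}
$$
for every $X\le Y$ in $\mathscr{L}(\mathcal{C})$.

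First I would dispatch the base case. When $X=Y$, the flat $Y$ itself is the only member of $\mathcal{C}(X,X)$, so $\nu(X,X)=(-1)^0=1$, as required. For the general case I would swap the order of summation on the left-hand side above. Every central set $Z$ with $X\subseteq Z\subseteq Y$ possesses a well-defined closure ${\rm cl}_\mathcal{C}(Z)$, which is a flat lying in the interval $[X,Y]$ of $\mathscr{L}(\mathcal{C})$ since $X\subseteq Z\subseteq {\rm cl}_\mathcal{C}(Z)\subseteq Y$ (using that $Y$ is its own closure). Partitioning the central sets $Z$ according to the value of their closure then gives
$$
\sum_{Y'\in[X,Y]\cap\mathscr{L}(\mathcal{C})}\nu(X,Y')=\sum_{\substack{Z\in\mathcal{C}\\ X\subseteq Z\subseteq Y}}(-1)^{|Z-X|}.
$$

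The decisive observation is that $Y\in\mathcal{C}$ and $\mathcal{C}$ is a simplicial complex, so every subset $Z$ with $X\subseteq Z\subseteq Y$ is automatically a central set. The right-hand side therefore collapses to the standard alternating sum $\sum_{S\subseteq Y-X}(-1)^{|S|}=0^{|Y-X|}$, which equals $\delta_{X,Y}$, completing the recurrence check and hence the proof.

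The main obstacle, and really the only step requiring technical care, is the inclusion ${\rm cl}_\mathcal{C}(Z)\subseteq Y$ used in the reindexing above. I would argue it by contradiction: if some $e\in{\rm cl}_\mathcal{C}(Z)$ lay outside $Y$, then $(Z\cup e)\cap Y=Z$ would have the same rank as $Z\cup e$, so (SR4) would give $Y\cup e\in\mathcal{C}$, and (SR3) applied to $Z\cup e$ and $Y$ would then force $r_\mathcal{C}(Y\cup e)\le r_\mathcal{C}(Y)$, yielding $e\in{\rm cl}_\mathcal{C}(Y)=Y$, a contradiction. With this inclusion in hand, the double sum swap is legitimate and the rest of the argument is essentially the inclusion-exclusion identity on the Boolean lattice of $Y-X$.
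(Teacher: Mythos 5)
Your proof is correct and follows essentially the same route as the paper: define $\nu(X,Y)$, partition the central sets between $X$ and $Y$ by their closures, and use the simplicial-complex property to reduce the M\"obius recurrence to the alternating sum over the Boolean lattice of $Y-X$. The only difference is that you explicitly verify the inclusion ${\rm cl}_\mathcal{C}(Z)\subseteq Y$ via (SR4) and (SR3), a detail the paper leaves implicit in its disjoint-union decomposition.
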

\begin{proof}
The case that $X=Y$, is trivial. Suppose $X\subset Y$ in $\mathscr{L}(\mathcal{C})$. Let $\nu(X,Y):=\sum_{Z\in\mathcal{C}(X,Y)}(-1)^{|Z-X|}$. From \cite[Proposition 3.4]{Ardila2007}, we know that $\mathcal{C}$ is a simplicial complex. In other words, if $S\in \mathcal{C}$, then each subset of $S$ belongs to $\mathcal{C}$ as well. It follows that 
\[
\bigsqcup_{X\subseteq Z\subseteq Y\,{\rm in}\,\mathscr{L}(\mathcal{C})}\mathcal{C}(X,Z)=\big\{S\subseteq E\mid X\subseteq S\subseteq Y\big\},
\]
where the union is disjoint. Immediately, we have
\[
\sum_{X\subseteq Z\subseteq Y\,{\rm in}\,\mathscr{L}(\mathcal{C})}\nu(X,Z)=\sum_{X\subseteq S\subseteq Y}(-1)^{|S-X|}=0,
\]
which finishes the proof.
\end{proof}

Applying the deletion-contraction formula for the Tutte polynomial $T(\mathcal{C};t,s)$ to the relation $\chi(\mathcal{C};t)=(-1)^{r(\mathcal{C})}T(\mathcal{C};1-t,0)$, we directly derive that $\chi(\mathcal{C};t)$ satisfies the following deletion-contraction  recurrence: 
\begin{equation}\label{Semi-Characteristic-DCF}
\chi(\mathcal{C};t)=
\begin{cases}
(t-1)\chi(\mathcal{C}\backslash e;t),& \text{if $e$ is a bridge};\\
0,& \text{if $e$ is a loop};\\
\chi(\mathcal{C}\backslash e;t),& \text{if $e\notin\mathcal{C}$};\\
\chi(\mathcal{C}\backslash e;t)-\chi(\mathcal{C}/e;t),& \text{otherwise}.
\end{cases}
\end{equation}
According to this formula, we conclude that the characteristic polynomial $\chi(\mathcal{C};t)$ is the zero polynomial if $\mathcal{C}$ contains a loop. Excluding this case, the poset $\mathscr{L}(\mathcal{C})$ allows us to provide an alternative expression for characteristic polynomials of semimatroids (We do not know an explicit source for this formula). 
\begin{theorem}\label{Mobius-Characteristic-Semi}
Let $(E,\mathcal{C},r_{\mathcal{C}})$ be a semimatroid without loops. Then, the characteristic polynomial $\chi(\mathcal{C};t)$ can be expressed as 
\[
\chi(\mathcal{C};t)=\sum_{X\in\mathscr{L}(\mathcal{C})}\mu(X)t^{r(\mathcal{C})-r_{\mathcal{C}}(X)}.
\]
\end{theorem}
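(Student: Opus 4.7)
The plan is to rewrite the defining sum of $\chi(\mathcal{C};t)$ by grouping its terms according to the closure of the summation index, and then to identify each inner sum with a M\"obius value via \autoref{Mobius-Function}. First, I would record a preliminary observation: since $\mathcal{C}$ is loopless, no singleton $\{x\}\in\mathcal{C}$ has rank $0$, so ${\rm cl}_{\mathcal{C}}(\emptyset)=\emptyset$; hence $\emptyset\in\mathscr{L}(\mathcal{C})$ is the minimum flat, and the notation $\mu(F)=\mu(\emptyset,F)$ is unambiguous.

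Next, I would invoke the fact recalled just before \autoref{Mobius-Function} that for every $X\in\mathcal{C}$ the set ${\rm cl}_{\mathcal{C}}(X)$ is the unique maximal element of $\mathcal{C}$ containing $X$ with $r_{\mathcal{C}}({\rm cl}_{\mathcal{C}}(X))=r_{\mathcal{C}}(X)$. In particular ${\rm cl}_{\mathcal{C}}(X)\in\mathscr{L}(\mathcal{C})$ and $X\subseteq{\rm cl}_{\mathcal{C}}(X)$, so sending $X$ to its closure partitions $\mathcal{C}$ as
\[
\mathcal{C}=\bigsqcup_{F\in\mathscr{L}(\mathcal{C})}\mathcal{C}(\emptyset,F),
\]
where $\mathcal{C}(\emptyset,F)=\{Z\in\mathcal{C}\mid Z\subseteq F \And {\rm cl}_{\mathcal{C}}(Z)=F\}$ as in \autoref{Mobius-Function}. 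Substituting this decomposition into the definition of $\chi(\mathcal{C};t)$ and using $r_{\mathcal{C}}(X)=r_{\mathcal{C}}(F)$ whenever ${\rm cl}_{\mathcal{C}}(X)=F$, I obtain
\[
\chi(\mathcal{C};t)=\sum_{F\in\mathscr{L}(\mathcal{C})}t^{r(\mathcal{C})-r_{\mathcal{C}}(F)}\sum_{X\in\mathcal{C}(\emptyset,F)}(-1)^{|X|}.
\]

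Applying \autoref{Mobius-Function} with $X=\emptyset$ and $Y=F$ then identifies the inner sum with $\mu(\emptyset,F)=\mu(F)$, which yields the claimed formula. The only point requiring any care is the legitimacy of the closure partition, and this is exactly the content of the rank-preservation property of ${\rm cl}_{\mathcal{C}}$ recalled in the excerpt. So I do not anticipate any genuine obstacle; the argument is a direct grouping of the definition followed by one application of the preceding lemma.
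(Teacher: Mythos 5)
Your proposal is correct and follows essentially the same route as the paper's proof: both rest on the decomposition $\mathcal{C}=\bigsqcup_{F\in\mathscr{L}(\mathcal{C})}\mathcal{C}(\emptyset,F)$ induced by the closure map and then invoke \autoref{Mobius-Function} to identify each inner alternating sum with $\mu(F)$. The only cosmetic difference is direction (you group the defining sum of $\chi(\mathcal{C};t)$ by closures, while the paper expands the M\"obius sum), plus your explicit justification that looplessness forces $\emptyset\in\mathscr{L}(\mathcal{C})$, which the paper simply asserts.
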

\begin{proof}
Clearly, $\emptyset\in\mathscr{L}(\mathcal{C})$.  It follows from \autoref{Mobius-Function} that
\[
\sum_{X\in\mathscr{L}(\mathcal{C})}\mu(X)t^{r(\mathcal{C})-r_{\mathcal{C}}(X)}= \sum_{X\in\mathscr{L}(\mathcal{C})}\Big(\sum_{S\in\mathcal{C}(\emptyset,X)}(-1)^{|S|}\Big)t^{r(\mathcal{C})-r_{\mathcal{C}}(X)}. 
\]
As every central subset $S\in\mathcal{C}$ corresponds to a unique flat ${\rm cl}_\mathcal{C}(S)\in\mathscr{L}(\mathcal{C})$, we have the following decomposition 
\[
\mathcal{C}=\bigsqcup_{X\in\mathscr{L}(\mathcal{C})}\mathcal{C}(\emptyset,X).
\]
This implies that 
\[
\sum_{X\in\mathscr{L}(\mathcal{C})}\mu(X)t^{r(\mathcal{C})-r_{\mathcal{C}}(X)}=\sum_{X\in\mathcal{C}}(-1)^{|X|}t^{r(\mathcal{C})-r_{\mathcal{C}}(X)}=\chi(\mathcal{C};t).
\]
This completes the proof.
\end{proof}
A natural question is whether there is a combinatorial interpretation for the coefficients of characteristic polynomials of semimatroids. To this end, let us review the Whitney's Broken Circuit Theorem for matroids, which was independently established by Brylawski \cite{Brylawski1977} and Rota \cite{Rota1964}. Let $M$ be a matroid and $\prec$ a linear ordering of $E(M)$. A {\em broken circuit} of $M$ is obtained from a circuit of $M$ by removing its minimal element with respect to $\prec$. 
\begin{proposition} [\cite{Brylawski1977,Rota1964}]\label{WBCT}
Let $M$ be a loopless matroid. Then, the unsigned coefficient $w_i(M)$ of $t^{r(M)-i}$ in the characteristic polynomial $\chi(M;t)$ equals the number of subsets of $E(M)$ that have size $i$ and do not contain broken circuits.
\end{proposition}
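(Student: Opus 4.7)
My plan is to prove Proposition~\ref{WBCT} by induction on $|E(M)|$, matching the deletion-contraction recurrence for $\chi(M;t)$ (the matroid analogue of \eqref{Semi-Characteristic-DCF}) against a corresponding recurrence for the NBC counts $w_i(M)$. Throughout I will let $e$ denote the $\prec$-maximum element of $E(M)$; this choice is essential because it forces $e$ never to be the $\prec$-minimum of a circuit containing it, so every broken circuit that touches $e$ genuinely contains $e$. The base case $E(M)=\emptyset$ is trivial, with both sides equal to $1$.

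For the inductive step, since $M$ is loopless only two sub-cases arise. If $e$ is a coloop, then $e$ lies in no circuit and the broken circuits of $M$ agree with those of $M\setminus e$; hence every NBC subset of $M$ of size $i$ is either an NBC subset of $M\setminus e$ of size $i$ or one of size $i-1$ with $e$ adjoined, giving $w_i(M)=w_i(M\setminus e)+w_{i-1}(M\setminus e)$. Combined with $\chi(M;t)=(t-1)\chi(M\setminus e;t)$ and $r(M)=r(M\setminus e)+1$, the induction hypothesis yields the desired coefficient matching after expanding the product. If $e$ is neither a loop nor a coloop, then $\chi(M;t)=\chi(M\setminus e;t)-\chi(M/e;t)$ with $r(M\setminus e)=r(M)$ and $r(M/e)=r(M)-1$, and it suffices to prove $w_i(M)=w_i(M\setminus e)+w_{i-1}(M/e)$. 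I would partition the NBC subsets of $M$ of size $i$ according to whether they contain $e$: those avoiding $e$ are exactly the NBC subsets of $M\setminus e$, because any broken circuit $C-\min_\prec C$ contained in $E\setminus\{e\}$ forces $e\notin C$; while the assignment $S\mapsto S\setminus\{e\}$ ought to give a bijection between NBC subsets of $M$ containing $e$ and NBC subsets of $M/e$.

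The main obstacle will be this second bijection. I plan to invoke the standard description of circuits of $M/e$ as the minimal nonempty members of $\{C-e:C\in\mathscr{C}(M)\}$, together with the identity $\min_\prec(C-e)=\min_\prec C$ whenever $e\in C$, which is forced by the choice of $e$ as $\prec$-maximum. The delicate direction is ``$S'$ is NBC in $M/e$ implies $S'\cup\{e\}$ is NBC in $M$'', proved by contradiction: assume $S'\cup\{e\}$ contains a broken circuit $B=C-\min_\prec C$ of $M$. If $e\in C$, then $C-e$ is dependent in $M/e$ and contains a circuit $C^{*}$ of $M/e$, and a comparison of $\prec$-minima places $C^{*}-\min_\prec C^{*}$ inside $B\setminus\{e\}\subseteq S'$, contradicting NBC of $S'$ in $M/e$. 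If $e\notin C$, then $C$ is dependent in $M/e$ but may fail to be a circuit there, so I would pick any circuit $C^{*}\subseteq C$ of $M/e$ and split on whether $\min_\prec C\in C^{*}$; in both sub-cases the broken circuit $C^{*}-\min_\prec C^{*}$ of $M/e$ lies inside $C-\min_\prec C=B\subseteq S'$, again the desired contradiction. Once this bijection is in place, the induction closes cleanly.
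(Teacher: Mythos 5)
First, note that the paper does not actually prove \autoref{WBCT}; it is cited to Brylawski and Rota, and the argument the paper does give is for the semimatroid generalization, \autoref{Generalized-Broken-Circuit-Theorem}. Your proposal is, in substance, that same argument specialized back to matroids: induction on the ground set via deletion--contraction of the $\prec$-maximum element, with the induction closed by exactly the two facts the paper labels (a) and (b) in its proof of \autoref{Generalized-Broken-Circuit-Theorem}. Where the paper dismisses (a) and (b) as ``simple facts,'' you supply the actual verification, and your case analysis (comparing $\min_\prec C^{*}$ with $\min_\prec C$, splitting on whether $\min_\prec C\in C^{*}$) is correct; the choice of $e$ as the $\prec$-maximum does exactly the work you claim.

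There is one gap you should close. In the case where $e$ is neither a loop nor a coloop, you apply the inductive hypothesis to $M/e$, but $M/e$ need not be loopless: if $e$ lies in a two-element circuit $\{e,f\}$, then $f$ is a loop of $M/e$, and the statement of \autoref{WBCT} as you are inducting on it does not cover that case. The fix is standard but must be said: either strengthen the statement to all matroids, observing that a matroid with a loop $\ell$ has $\{\ell\}$ as a circuit, hence $\emptyset$ as a broken circuit, so the NBC count is identically zero while $\chi(M/e;t)=0$ as well (this is how the paper's \autoref{Generalized-Broken-Circuit-Theorem} handles it, since that theorem imposes no looplessness hypothesis and disposes of the loop case first via \eqref{Semi-Characteristic-DCF}); or insert a one-line remark that when $M/e$ has a loop both $w_{i-1}(M/e)$ and the corresponding NBC count vanish, so the recurrences still match. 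With that amendment the induction closes and the proof is complete.
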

Extending this to semimatroids, the upcoming result provides a similar interpretation for the coefficients of characteristic polynomials of semimatroids. We could not find an explicit source for this, but the relevant concept of a {\em balanced broken circuit} of semimatroids was introduced in \cite{FZ2016}. Let $\prec$ be a linear ordering of $E$.  A subset $X$ of $E$ is called a {\em broken circuit} of $\mathcal{C}$ if $X$ is obtained by removing the minimal element from a circuit of $\mathcal{C}$ with respect to $\prec$. We sometimes express $\chi(\mathcal{C};t)$ in the form
\[
\chi(\mathcal{C};t)=\sum_{i=0}^{r(\mathcal{C})}(-1)^iw_i(\mathcal{C})t^{r(\mathcal{C})-i}.
\]
Each unsigned coefficient $w_i(\mathcal{C})$ in $\chi(\mathcal{C};t)$ is referred to as the {\em $i$-th unsigned Whitney number of the first kind} of $\mathcal{C}$. 
\begin{theorem}[Broken Circuit Theorem]\label{Generalized-Broken-Circuit-Theorem}
Let $(E,\mathcal{C},r_{\mathcal{C}})$ be a semimatroid and $\prec$ a linear ordering of $E$. Then, every unsigned coefficient $w_i(\mathcal{C})$ equals the number of central subsets in $\mathcal{C}$ that have size $i$ and do not contain broken  circuits with respect to $\prec$.
\end{theorem}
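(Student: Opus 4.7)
My plan is to adapt the classical involution argument of Whitney and Rota--Brylawski to the semimatroid setting. I first observe that every $X \in \NBC(\mathcal{C})$ is necessarily independent: any dependent central set contains a circuit $C$, and thence the broken circuit $C\setminus\min_\prec C$. So $r_\mathcal{C}(X)=|X|$ on NBC sets, and the desired identity rewrites as
\[
\sum_{X\in\mathcal{C}}(-1)^{|X|}t^{r(\mathcal{C})-r_\mathcal{C}(X)} \;=\; \sum_{X\in\NBC(\mathcal{C})}(-1)^{|X|}t^{r(\mathcal{C})-r_\mathcal{C}(X)}.
\]
If $\mathcal{C}$ has a loop $e$ then $\{e\}\in\mathscr{C}(\mathcal{C})$ forces $\emptyset$ to be a broken circuit, so $\NBC(\mathcal{C})=\emptyset$ and both sides vanish by \eqref{Semi-Characteristic-DCF}. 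Hence assume $\mathcal{C}$ is loopless. It then suffices to produce a sign-reversing, rank-preserving involution $\phi$ on $\mathcal{C}_{\mathrm{bad}}:=\{X\in\mathcal{C}: X\supseteq B\text{ for some broken circuit }B\}$, which will cancel all ``bad'' contributions and leave exactly the NBC summands.

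For each $X\in\mathcal{C}_{\mathrm{bad}}$ let $e^*(X)$ be the smallest element (under $\prec$) of the nonempty set $\{e\in E:\exists\,C\in\mathscr{C}(\mathcal{C})\text{ with }\min_\prec C=e\text{ and }C\setminus e\subseteq X\}$, and set $\phi(X):=X\triangle\{e^*(X)\}$. The four properties to verify are: (i) $\phi(X)\in\mathcal{C}$, via (SR4) applied to $X$ and any witnessing circuit $C$ for $e^*(X)$, using that $X\cap C\supseteq C\setminus e^*(X)$ is independent with $r_\mathcal{C}(X\cap C)=|C|-1=r_\mathcal{C}(C)$, so $X\cup C\in\mathcal{C}$ and in particular $X\cup\{e^*(X)\}\in\mathcal{C}$; (ii) $r_\mathcal{C}(\phi(X))=r_\mathcal{C}(X)$, since $e^*(X)\in\cl_\mathcal{C}(C\setminus e^*(X))\subseteq\cl_\mathcal{C}(X)$ by the defining circuit relation and monotonicity of closure; (iii) $|\phi(X)|=|X|\pm1$, so that the signs $(-1)^{|X|}$ cancel pairwise; and (iv) $\phi\circ\phi=\id$, equivalently $e^*(\phi(X))=e^*(X)$. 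Property (iv) is routine when $e^*(X)\in X$, where $\phi(X)=X\setminus\{e^*(X)\}\subseteq X$ has strictly fewer broken circuits so the same witness persists.

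The main obstacle will be property (iv) in the subcase $e^*(X)\notin X$, because $\phi(X)=X\cup\{e^*(X)\}$ may a priori admit a new witnessing circuit $C'$ with $\min_\prec C'=:e'\prec e^*(X)$ and $C'\setminus e'\subseteq X\cup\{e^*(X)\}$. The benign sub-subcase $e^*(X)\notin C'$ gives $C'\setminus e'\subseteq X$, directly contradicting the minimality of $e^*(X)$ for $X$; the hard sub-subcase has $e^*(X)\in C\cap C'$, where $C$ is the witness circuit for $e^*(X)$ in $X$. To rule this out, I would first establish, as a preliminary lemma, a strong circuit-elimination property for semimatroid circuits whose union is central---mirroring the matroid axiom, derivable from (SR3) combined with (SR4) used to verify that $C\cup C'\in\mathcal{C}$ (via $e'\in\cl_\mathcal{C}(X\cup\{e^*(X)\})$)---and apply it at $e^*(X)$ to produce a circuit $D$ with $e'\in D\subseteq(C\cup C')\setminus\{e^*(X)\}\subseteq X\cup\{e'\}$. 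Descending on the minimum element (iterating the elimination whenever $\min_\prec D\prec e'$ lands in $X$ but the resulting broken circuit still contains $e'$), I expect to extract a witnessing circuit with minimum strictly below $e^*(X)$ and broken circuit entirely inside $X$, contradicting the choice of $e^*(X)$. Setting up this semimatroid strong circuit-elimination lemma and verifying termination of the descent is the most delicate step, and is where I expect the bulk of the technical work to lie.
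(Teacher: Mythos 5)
Your proof is correct, but it takes a genuinely different route from the paper. The paper proves the theorem by induction on $|E|$ via the deletion--contraction recurrence \eqref{Semi-Characteristic-DCF} applied at the largest element $e_m$: it matches the recursions $w_0(\mathcal{C})=w_0(\mathcal{C}\backslash e_m)$ and $w_i(\mathcal{C})=w_i(\mathcal{C}\backslash e_m)+w_{i-1}(\mathcal{C}/e_m)$ against two facts relating broken circuits of $\mathcal{C}$ to those of $\mathcal{C}\backslash e_m$ and $\mathcal{C}/e_m$. You instead cancel the non-NBC terms of the defining sum directly by a sign-reversing, rank-preserving involution. The paper's induction is shorter because the recurrence is already in hand, but it leans on the unproved facts (a) and (b); your argument is self-contained and bijective, at the cost of two auxiliary lemmas: monotonicity of $\cl_{\mathcal{C}}$ on nested central sets (provable from (SR3) and (SR4) exactly as you indicate), and strong circuit elimination for circuits lying in a common central set. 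For the latter, the cleanest route is to note that for $S\in\mathcal{C}$ the restriction $\mathcal{C}|S$ has $2^S$ as its collection of central sets and is therefore a matroid whose circuits are the circuits of $\mathcal{C}$ contained in $S$, so the matroid axiom applies verbatim. Finally, the step you flag as the most delicate --- the descent in the hard sub-subcase --- is in fact unnecessary: since $\min_\prec C=e^*(X)$ and $\min_\prec C'=e'$, every element of $(C\cup C')\setminus\{e^*(X)\}$ is $\succeq e'$, so the eliminated circuit $D$ with $e'\in D\subseteq (C\cup C')\setminus\{e^*(X)\}$ automatically satisfies $\min_\prec D=e'$ and $D\setminus e'\subseteq (C\setminus e^*(X))\cup(C'\setminus\{e',e^*(X)\})\subseteq X$, contradicting the minimality of $e^*(X)$ in a single application. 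With that observation your four properties (i)--(iv) all hold and the involution closes the proof.
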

\begin{proof}
Assume $E=\{e_1\prec e_2\prec\cdots\prec e_m\}$. We prove the result by induction on $m$. If $m=0$,  there is nothing to prove. Suppose $m\ge 1$ and the result holds for smaller values of $m$. If $e_m$ is a loop of $\mathcal{C}$, it is trivial via \eqref{Semi-Characteristic-DCF}. If $e_m\notin\mathcal{C}$, from $\chi(\mathcal{C};t)=\chi(\mathcal{C}\backslash e_m;t)$ in \eqref{Semi-Characteristic-DCF}, the result holds in this case by the inductive hypothesis. 

If $e_m\in\mathcal{C}$ is a non-loop of $\mathcal{C}$, by the inductive hypothesis, the result holds for both semimatroids $\mathcal{C}\backslash e_m$ and $\mathcal{C}/e_m$. By \eqref{Semi-Characteristic-DCF},  for $i=1,2,\ldots,r(\mathcal{C})$, we have 
\[
w_0(\mathcal{C})=w_0(\mathcal{C} \backslash e_m)\quad\And\quad w_i(\mathcal{C},\alpha)=w_i(\mathcal{C}\backslash e_m)+w_{i-1}(\mathcal{C}/e_m).
\]
Therefore, to complete the induction step, it is sufficient to check that the above relations are true. But this is a consequence of the following two simple facts: (a) given a central subset $X\subseteq E-e_m$, $X$ contains no broken circuits of $\mathcal{C}$ if and only if $X\in\mathcal{C}\backslash e_m$ does not contain broken  circuits of $\mathcal{C}\backslash e_m$ with respect to $\prec$; (b) given a central subset $X\subseteq E$ with $e_m\in X$, $X$ contains no broken circuits of $\mathcal{C}$ if and only if $X-e_m$ does not contain broken circuits of $\mathcal{C}/e_m$ with respect to $\prec$.
\end{proof}

It is worth pointing out that the coefficients $w_i(\mathcal{C})$ in \autoref{Generalized-Broken-Circuit-Theorem} are independent of the choice of linear ordering of $E$, and \autoref{Generalized-Broken-Circuit-Theorem} aligns with \autoref{WBCT} when $\mathcal{C}$ is a matroid.  The next corollary is an immediate result of \autoref{Generalized-Broken-Circuit-Theorem}.
\begin{corollary}\label{Value-1}
Let $(E,\mathcal{C},r_\mathcal{C})$ be a semimatroid and a linear ordering $\prec$ of $E$. Then the value $(-1)^{r(\mathcal{C})}\chi(\mathcal{C};-1)$ equals the number of central subsets in $\mathcal{C}$ that do not contain broken circuits with respect to $\prec$.
\end{corollary}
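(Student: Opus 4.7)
The plan is to deduce the corollary directly from Theorem \ref{Generalized-Broken-Circuit-Theorem} by evaluating the closed form of $\chi(\mathcal{C};t)$ at $t=-1$. Starting from the expansion
\[
\chi(\mathcal{C};t)=\sum_{i=0}^{r(\mathcal{C})}(-1)^{i}w_{i}(\mathcal{C})\,t^{r(\mathcal{C})-i},
\]
substituting $t=-1$ collapses the signs: each term becomes $(-1)^{i}(-1)^{r(\mathcal{C})-i}w_{i}(\mathcal{C})=(-1)^{r(\mathcal{C})}w_{i}(\mathcal{C})$. Multiplying through by $(-1)^{r(\mathcal{C})}$ yields
\[
(-1)^{r(\mathcal{C})}\chi(\mathcal{C};-1)=\sum_{i=0}^{r(\mathcal{C})}w_{i}(\mathcal{C}).
\]

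The remaining step is to identify this sum combinatorially. By \autoref{Generalized-Broken-Circuit-Theorem}, $w_{i}(\mathcal{C})$ counts precisely the central subsets of cardinality $i$ which contain no broken circuit relative to $\prec$. Summing over all $i$ therefore counts every such central subset exactly once, regardless of its size, which gives the desired total.

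One small bookkeeping point I would include for completeness is why restricting the index to $0 \le i \le r(\mathcal{C})$ loses nothing: any central subset free of broken circuits is also free of circuits, because if $C\in\mathscr{C}(\mathcal{C})$ were contained in such a set $X$, then the broken circuit $C-\min_{\prec}C\subseteq C\subseteq X$ would also lie in $X$. Hence broken-circuit-free central sets are independent, so their size is bounded by $r(\mathcal{C})$ and the counting above is exhaustive. There is no real obstacle here; the corollary is essentially a one-line substitution into the Broken Circuit Theorem, and the only care needed is to track the sign $(-1)^{r(\mathcal{C})}$ and to confirm that no broken-circuit-free central sets are inadvertently omitted from the range of summation.
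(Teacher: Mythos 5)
Your proof is correct and matches the paper's treatment: the paper states this corollary as an immediate consequence of \autoref{Generalized-Broken-Circuit-Theorem}, and your substitution $t=-1$ together with summing the $w_i(\mathcal{C})$ is exactly that deduction. The additional bookkeeping remark (that broken-circuit-free central sets are independent, hence of size at most $r(\mathcal{C})$) is a correct and welcome clarification, though the paper leaves it implicit.
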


As noted in  \cite[Proposition 3.1]{Ardila2007}, every semimatroid can be used to construct a matroid. Specifically, by extending the rank function $r_{\mathcal{C}}$ from $\mathcal{C}$ to $2^E$ via defining 
\begin{equation}\label{Rank-Extending}
r(X):={\rm max}\big\{r_{\mathcal{C}}(Y)\mid Y\subseteq X,Y\in\mathcal{C}\big\},\quad\forall\,X\in 2^E,
\end{equation}
we obtain a matroid $(E,r)$ with the rank function $r$, denoted by $M_{\mathcal{C}}$. The following consequence further compares the coefficients of the characteristic polynomials $\chi(M_\mathcal{C};t)$ and $\chi(\mathcal{C};t)$.
\begin{corollary}\label{Comparison}
Let $(E,\mathcal{C},r_\mathcal{C})$ be a semimatroid without loops. Then 
\[
w_i(M_\mathcal{C})\le w_i(\mathcal{C}) \quad \mbox{ for }\quad i=0,1,\ldots,r(\mathcal{C}).
\]
\end{corollary}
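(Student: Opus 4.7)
The plan is to combine the Broken Circuit Theorems for matroids (\autoref{WBCT}) and semimatroids (\autoref{Generalized-Broken-Circuit-Theorem}) by verifying that, for a fixed linear ordering $\prec$ of $E$, every set counted by $w_i(M_\mathcal{C})$ is also counted by $w_i(\mathcal{C})$. The bridge is the observation that $\mathcal{C}$ and $M_\mathcal{C}$ share their independent sets, which forces $\mathscr{C}(\mathcal{C})\subseteq\mathscr{C}(M_\mathcal{C})$.

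First I would establish $\mathscr{I}(\mathcal{C})=\mathscr{I}(M_\mathcal{C})$ directly from \eqref{Rank-Extending}. If $X\in\mathscr{I}(\mathcal{C})$, then $X\in\mathcal{C}$ with $r_\mathcal{C}(X)=|X|$, so $r(X)\ge|X|$, and matroid rank bounds force equality. Conversely, if $r(X)=|X|$, then some $Y\subseteq X$ satisfies $Y\in\mathcal{C}$ and $r_\mathcal{C}(Y)=|X|$; the chain $|X|=r_\mathcal{C}(Y)\le|Y|\le|X|$ gives $Y=X\in\mathcal{C}$ with $r_\mathcal{C}(X)=|X|$. Since circuits are the minimal dependent sets and dependence is decided by $\mathscr{I}(\cdot)$ (together with membership in $\mathcal{C}$ on the semimatroid side), every circuit of $\mathcal{C}$ is a minimal dependent set of $M_\mathcal{C}$, hence $\mathscr{C}(\mathcal{C})\subseteq\mathscr{C}(M_\mathcal{C})$. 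Consequently, every broken circuit of $\mathcal{C}$ (relative to $\prec$) is also a broken circuit of $M_\mathcal{C}$.

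If $M_\mathcal{C}$ has a loop (which can happen precisely when some singleton lies outside $\mathcal{C}$), then $\chi(M_\mathcal{C};t)\equiv0$ and $w_i(M_\mathcal{C})=0$, making the inequality trivial. Otherwise, \autoref{WBCT} applies to $M_\mathcal{C}$, and $w_i(M_\mathcal{C})$ counts the subsets $X\subseteq E$ with $|X|=i$ that contain no broken circuit of $M_\mathcal{C}$. Any such $X$ contains no circuit of $M_\mathcal{C}$ (since a circuit contains its own broken circuit), so $X\in\mathscr{I}(M_\mathcal{C})=\mathscr{I}(\mathcal{C})\subseteq\mathcal{C}$; and since broken circuits of $\mathcal{C}$ are broken circuits of $M_\mathcal{C}$, the set $X$ also avoids every broken circuit of $\mathcal{C}$. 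Thus $X$ is among the central subsets counted by $w_i(\mathcal{C})$ via \autoref{Generalized-Broken-Circuit-Theorem}, and the inclusion of counting families gives $w_i(M_\mathcal{C})\le w_i(\mathcal{C})$.

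The argument is essentially a single injective comparison, so there is no serious obstacle; the only place requiring care is the identification $\mathscr{I}(\mathcal{C})=\mathscr{I}(M_\mathcal{C})$, which must be teased out of \eqref{Rank-Extending} rather than assumed. Once that is in place, the containment of broken-circuit families follows at once, and applying the two Broken Circuit Theorems term-by-term yields the coefficient-wise inequality.
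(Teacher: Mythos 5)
Your proposal is correct and follows essentially the same route as the paper: both arguments show that every size-$i$ set with no broken circuit of $M_\mathcal{C}$ is independent in $M_\mathcal{C}$, hence lies in $\mathcal{C}$, and then use $\mathscr{C}(\mathcal{C})\subseteq\mathscr{C}(M_\mathcal{C})$ to conclude it also avoids all broken circuits of $\mathcal{C}$, so the two Broken Circuit Theorems give the coefficient-wise inequality. Your explicit verification of $\mathscr{I}(\mathcal{C})=\mathscr{I}(M_\mathcal{C})$ and of the case where $M_\mathcal{C}$ has a loop only makes steps the paper leaves implicit more precise.
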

\begin{proof}
Let $\prec$ be a linear ordering of $E$. We denote by $\mathscr{C}'(M_\mathcal{C})$ and $\mathscr{C}'(\mathcal{C})$ the sets of broken circuits of $M_\mathcal{C}$ and $\mathcal{C}$ with respect to $\prec$, respectively. Let $\mathscr{C}''(M_\mathcal{C})$ and $\mathscr{C}''(\mathcal{C})$ represent the sets of  central subsets of $M_\mathcal{C}$ and $\mathcal{C}$ that contain no broken circuits with respect to $\prec$, respectively. 

By \autoref{Generalized-Broken-Circuit-Theorem}, the proof  reduces to showing that every member of $\mathscr{C}''(M_\mathcal{C})$ belongs to $\mathscr{C}''(\mathcal{C})$. Given an element $X\in\mathscr{C}''(M_\mathcal{C})$. Since $X$ does not contain broken circuits of $M_\mathcal{C}$ with respect to $\prec$, $X$ contains no circuits of $M_\mathcal{C}$. It follows that $X$ is an independent set of $M_\mathcal{C}$. This implies that $X$ is also independent set of $\mathcal{C}$ and $X\in\mathcal{C}$. Note that  $\mathscr{C}(\mathcal{C})\subseteq\mathscr{C}(M_\mathcal{C})$. This means $\mathscr{C}'(\mathcal{C})\subseteq\mathscr{C}'(M_\mathcal{C})$. It deduces that $X$ does not contain broken circuits of $\mathcal{C}$ with respect to $\prec$. Thus $\mathscr{C}''(M_\mathcal{C})\subseteq \mathscr{C}''(\mathcal{C})$. This completes the proof.
\end{proof}

Let $M$ be a loopless matroid. A well known result states that the coefficients of the characteristic polynomial $\chi(M;t)$ strictly alternate in sign. Following \autoref{Comparison}, we present a direct consequence of this below. 
\begin{corollary}\label{Alternate-Sign-Coefficient}
Let $(E,\mathcal{C},r_\mathcal{C})$ be a semimatroid without loops. Then the coefficients in $\chi(\mathcal{C};t)$ are nonzero and alternate in sign with $w_0(\mathcal{C})=1$.
\end{corollary}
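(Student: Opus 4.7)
The plan is to deduce this corollary directly from Corollary~\ref{Comparison}, after first reducing to a setting in which the associated matroid $M_{\mathcal{C}}$ is itself loopless. The logical shape is: loopless matroids have strictly alternating characteristic polynomial coefficients; the comparison inequality $w_i(M_{\mathcal{C}})\le w_i(\mathcal{C})$ then pushes positivity from $M_{\mathcal{C}}$ to $\mathcal{C}$; and the sign pattern of $\chi(\mathcal{C};t)$ is already built into the definition of the $w_i(\mathcal{C})$.

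First I would reduce to the case where $\{e\}\in\mathcal{C}$ for every $e\in E$. If there is some $e\in E$ with $\{e\}\notin\mathcal{C}$, then the deletion-contraction recurrence \eqref{Semi-Characteristic-DCF} gives $\chi(\mathcal{C};t)=\chi(\mathcal{C}\backslash e;t)$. Since $\mathcal{C}\backslash e$ inherits the rank function of $\mathcal{C}$ and is therefore still loopless, the statement follows by induction on $|E|$, with the base case $E=\emptyset$ being trivial.

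Under this reduction, every singleton has $r_{\mathcal{C}}(\{e\})=1$ (as $\mathcal{C}$ has no loops), so the extension formula \eqref{Rank-Extending} gives $r(\{e\})=1$ in $M_{\mathcal{C}}$; hence $M_{\mathcal{C}}$ is a loopless matroid, and moreover $r(M_{\mathcal{C}})=r(E)=r(\mathcal{C})$. The classical fact recalled in the paragraph preceding this corollary yields $w_i(M_{\mathcal{C}})\ge 1$ for all $0\le i\le r(M_{\mathcal{C}})$. Applying \autoref{Comparison} gives $w_i(\mathcal{C})\ge w_i(M_{\mathcal{C}})\ge 1$ for all $0\le i\le r(\mathcal{C})$. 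Because the coefficient of $t^{r(\mathcal{C})-i}$ in $\chi(\mathcal{C};t)$ is by definition $(-1)^i w_i(\mathcal{C})$, these coefficients are nonzero and strictly alternate in sign. The normalization $w_0(\mathcal{C})=1$ follows immediately from \autoref{Generalized-Broken-Circuit-Theorem}, as $\emptyset$ is the unique central subset of size zero and vacuously contains no broken circuit.

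The argument is essentially free of calculation, and the main (and only) delicate point is the preliminary reduction: if some element of $E$ were not a singleton of $\mathcal{C}$, it would be a loop of $M_{\mathcal{C}}$, forcing $\chi(M_{\mathcal{C}};t)\equiv 0$ and making $w_i(M_{\mathcal{C}})=0$, so that the inequality $w_i(M_{\mathcal{C}})\le w_i(\mathcal{C})$ would become vacuous and fail to supply the needed positivity. Once this reduction is in place, the comparison with $M_{\mathcal{C}}$ does all the remaining work.
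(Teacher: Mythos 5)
Your proof is correct and takes the route the paper intends: the corollary is presented as a direct consequence of \autoref{Comparison} combined with the classical strict alternation of coefficients for loopless matroids. Your preliminary reduction to the case where every singleton of $E$ lies in $\mathcal{C}$ is a genuine improvement rather than a detour---without it, an element $e$ with $\{e\}\notin\mathcal{C}$ is a loop of $M_{\mathcal{C}}$, so $w_i(M_{\mathcal{C}})=0$ for all $i$ and the inequality $w_i(M_{\mathcal{C}})\le w_i(\mathcal{C})$ becomes vacuous; the paper silently glosses over this case, and your induction via $\chi(\mathcal{C};t)=\chi(\mathcal{C}\backslash e;t)$ closes it cleanly.
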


In the study of semimatroids, there are many equivalent ways to define a semimatroid. We can define a semimatroid in terms of a matroid $\tilde{N}$ on the ground set $E\sqcup p$ and a distinguished element $p$ as well. The pair $(\tilde{N},p)$ is called a {\em pointed matroid}. \cite[Theorem 5.4]{Ardila2007} showed that an arbitrary pointed matroid $(\tilde{N},p)$ on the ground set  $E\sqcup p$ defines a unique semimatroid $(E,\mathcal{C},r_{\mathcal{C}})$ such that $\mathcal{C}=\big\{X\subseteq E\mid p\notin {\rm cl}_{\tilde{N}}(X)\big\}$ and $r_{\mathcal{C}}$ is the restriction of $r_{\tilde{N}}$ to $\mathcal{C}$. This was implicit in the work of Wachs and Walker \cite{Wachs-Walker1986}. Based on this, Ardila further obtained a close link between the characteristic polynomials of semimatroids and matroids, in analogy to the relationship between the characteristic polynomials of affine arrangements and their conings (see \cite[Proposition 2.51]{Orlik-Terao1992} and \cite{Stanley2007}).
\begin{proposition}[\cite{Ardila2007}, Proposition 8.7]\label{Relation-Characteristic-Matroid-Semimatroid}
For any semimatroid $\mathcal{C}$,
\[
\chi(\tilde{N};t)=(t-1)\chi(\mathcal{C};t),
\]
where a matroid $\tilde{N}$ together with its loopless element $p$ forms a pointed matroid $(\tilde{N},p)$ on the ground set  $E\sqcup p$ such that $\mathcal{C}=\big\{X\subseteq E\mid p\notin {\rm cl}_{\tilde{N}}(X)\big\}$ and $r_{\mathcal{C}}$ is the restriction of $r_{\tilde{N}}$ to $\mathcal{C}$.
\end{proposition}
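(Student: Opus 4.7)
The plan is a direct term-by-term comparison between the two defining sums. First, I would partition subsets of $E\sqcup p$ into those avoiding $p$ and those containing $p$, and pair each $Y\subseteq E$ with $Y\sqcup p$, rewriting
\[
\chi(\tilde{N};t)=\sum_{Y\subseteq E}(-1)^{|Y|}\Bigl[t^{r(\tilde{N})-r_{\tilde{N}}(Y)}-t^{r(\tilde{N})-r_{\tilde{N}}(Y\sqcup p)}\Bigr].
\]
The next step is to analyze the bracket according to whether $Y\in\mathcal{C}$ or not, using the defining characterization of $\mathcal{C}$.

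For $Y\in\mathcal{C}$, the property $p\notin{\rm cl}_{\tilde{N}}(Y)$ forces $r_{\tilde{N}}(Y\sqcup p)=r_{\tilde{N}}(Y)+1$, whereas for $Y\notin\mathcal{C}$ one has $r_{\tilde{N}}(Y\sqcup p)=r_{\tilde{N}}(Y)$. Consequently, the bracket vanishes outside $\mathcal{C}$ and equals $t^{r(\tilde{N})-r_{\tilde{N}}(Y)}-t^{r(\tilde{N})-r_{\tilde{N}}(Y)-1}$ on $\mathcal{C}$. To convert the exponent, I would verify the rank identity $r(\tilde{N})=r(\mathcal{C})+1$: since $p$ is not a loop of $\tilde{N}$, any basis $B$ of $\mathcal{C}$ satisfies $p\notin{\rm cl}_{\tilde{N}}(B)$, so $B\sqcup p$ is independent in $\tilde{N}$, giving $r(\tilde{N})\ge r(\mathcal{C})+1$; conversely, every $Y\in\mathcal{C}$ satisfies $r_{\tilde{N}}(Y)=r_{\tilde{N}}(Y\sqcup p)-1\le r(\tilde{N})-1$, so $r(\mathcal{C})\le r(\tilde{N})-1$. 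Combining these with $r_{\mathcal{C}}(Y)=r_{\tilde{N}}(Y)$ turns the bracket into $(t-1)\,t^{r(\mathcal{C})-r_{\mathcal{C}}(Y)}$, and summing over $Y\in\mathcal{C}$ produces exactly $(t-1)\chi(\mathcal{C};t)$.

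I do not anticipate a serious obstacle; the proof is essentially bookkeeping once the pairing $Y\leftrightarrow Y\sqcup p$ is identified, and both cases of the bracket follow immediately from the definition of $\mathcal{C}$. The only mildly delicate point is the rank equality $r(\tilde{N})=r(\mathcal{C})+1$, which reduces to $p$ being loopless together with the fact that central sets are precisely those to which $p$ can be added while strictly increasing the rank. An alternative route via induction on $|E|$ combined with the deletion-contraction recurrence \eqref{Semi-Characteristic-DCF} for $\chi(\mathcal{C};t)$ and the analogous recurrence for $\chi(\tilde{N};t)$ applied to an element $e\in E$ (distinguishing whether $e$ is a loop, bridge, or neither in $\tilde{N}$) would also work, but the pairing argument is considerably cleaner.
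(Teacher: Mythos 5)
Your argument is correct and complete: the pairing $Y\leftrightarrow Y\sqcup p$, the observation that the bracket vanishes exactly when $p\in{\rm cl}_{\tilde{N}}(Y)$, and the rank identity $r(\tilde{N})=r(\mathcal{C})+1$ are all justified properly, and the edge case $E=\emptyset$ is consistent with the conventions $\chi(\mathcal{C};t)=1$. The paper itself gives no proof of this statement --- it is quoted from Ardila's Proposition 8.7 --- so there is nothing to compare against, but your direct term-by-term verification is a valid self-contained proof.
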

Next, let us discuss the unimodality and log-concavity of coefficients of characteristic polynomials of semimatroids. A well known result is presented as follows.
\begin{proposition}[\cite{AHE2018}, Theorem 9.9]\label{Huh}
The unsigned coefficients of the reduced characteristic polynomial $\bar{\chi}(M;t)$ of  a matroid $M$ form a log-concave sequence, where $\bar{\chi}(M;t):=\chi(M;t)/(t-1)$. Moreover, the unsigned coefficients of the characteristic polynomial $\chi(M;t)$ of  $M$ form a log-concave sequence.
\end{proposition}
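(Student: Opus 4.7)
The plan is to follow the Hodge-theoretic framework of Adiprasito, Huh, and Katz. First I would reduce the two assertions to a single one. Writing $\bar{\chi}(M;t) = \sum_{i=0}^{r(M)-1} (-1)^i \bar{w}_i(M) t^{r(M)-1-i}$ and using $\chi(M;t) = (t-1)\bar{\chi}(M;t)$, a direct expansion gives $w_i(M) = \bar{w}_i(M) + \bar{w}_{i-1}(M)$, with the convention $\bar{w}_{-1}(M) = \bar{w}_{r(M)}(M) = 0$. The class of positive log-concave sequences without internal zeros is closed under convolution, and $(w_i(M))$ is the convolution of $(\bar{w}_i(M))$ with $(1,1)$. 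Hence the second assertion follows formally from the first, and it suffices to prove log-concavity of $(\bar{w}_i(M))$.

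For the reduced assertion, I would introduce the Chow ring $A^\bullet(M)$ of $M$: a graded $\mathbb{Z}$-algebra built from generators indexed by the proper nonempty flats of $M$ modulo the Feichtner-Yuzvinsky relations. Two distinguished degree-one classes $\alpha, \beta \in A^1(M)$ are chosen so that the mixed intersection numbers $\deg(\alpha^{r(M)-1-i}\beta^i)$ recover the unsigned coefficients $\bar{w}_i(M)$ on the nose. In this language, log-concavity of $(\bar{w}_i(M))$ becomes log-concavity of mixed intersection numbers in the pencil spanned by $\alpha$ and $\beta$, a question purely about the algebraic structure of $A^\bullet(M)$.

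The heart of the proof is establishing the K\"ahler package for $A^\bullet(M)$: Poincar\'e duality in top degree $r(M)-1$, the hard Lefschetz property with respect to ample classes, and the Hodge-Riemann bilinear relations. When $M$ is realizable over a field, $A^\bullet(M)$ is the Chow ring of a smooth projective wonderful compactification of the complement of the associated hyperplane arrangement, and the package follows from classical algebraic geometry. In the general case there is no geometric model, and one must proceed combinatorially by induction on $M$ using \emph{matroid flips} --- combinatorial analogues of birational modifications --- while simultaneously propagating all three K\"ahler properties through each flip. Once Hodge-Riemann is available in degree one, applying it to the pencil $\langle \alpha, \beta \rangle$ forces negative semidefiniteness of the intersection form on the primitive part, which is precisely the inequality $\bar{w}_i(M)^2 \ge \bar{w}_{i-1}(M)\bar{w}_{i+1}(M)$.

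The main obstacle is the combinatorial K\"ahler package itself, and specifically the Hodge-Riemann relations in the absence of an underlying variety. The flip induction is delicate because Poincar\'e duality, hard Lefschetz, and Hodge-Riemann must all be maintained jointly under each matroid flip; weakening any one of them breaks the propagation. By comparison, the reduction from $\chi$ to $\bar{\chi}$, the identification of coefficients with mixed intersection numbers, and the extraction of log-concavity from Hodge-Riemann are essentially formal once the algebraic framework is in place.
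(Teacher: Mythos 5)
The paper does not prove this statement; it is quoted directly from Adiprasito--Huh--Katz \cite{AHE2018}, and your outline faithfully reproduces their strategy: the reduction $w_i(M)=\bar{w}_i(M)+\bar{w}_{i-1}(M)$ and the stability of log-concave sequences without internal zeros under convolution with $(1,1)$, the identification $\bar{w}_i(M)=\deg(\alpha^{r(M)-1-i}\beta^i)$ in the Chow ring, and the extraction of log-concavity from the degree-one Hodge--Riemann relations established via the flip induction. This matches the cited source's argument, so there is nothing to compare against within the paper itself.
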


From \autoref{Relation-Characteristic-Matroid-Semimatroid}, we can view the characteristic polynomial of a semimatroid as the reduced characteristic polynomial of the corresponding pointed matroid. Besides, a basic fact is that a log-concave sequence must be a unimodal sequence.  It follows from  \autoref{Huh} that the unsigned coefficients of the characteristic polynomial $\chi(\mathcal{C};t)$ form a log-concave sequence, and hence a unimodal sequence. 
\begin{corollary}\label{Semi-Log-Concave}
Let $(E,\mathcal{C},r_{\mathcal{C}})$ be a semimatroid without loops. Then, the sequence $w_0(\mathcal{C})$, $w_1(\mathcal{C})$, $\ldots$, $w_{r(\mathcal{C})}(\mathcal{C})$ satisfies the following properties:
\begin{itemize}
\item The sequence $w_i(\mathcal{C})$ is unimodal, i.e., there is an index $0\le k\le r(\mathcal{C})$ such that 
\[
w_0(\mathcal{C})\le\cdots\le w_{k-1}(\mathcal{C})\le w_k(\mathcal{C})\ge w_{k+1}(\mathcal{C})\ge\cdots\ge w_{r(\mathcal{C})}(\mathcal{C}).
\]
\item The sequence $w_i(\mathcal{C})$ is log-concave, i.e., for $k=1,\ldots, r(\mathcal{C})-1$,
 \[
w_{k-1}(\mathcal{C})w_{k+1}(\mathcal{C})\le  w_k(\mathcal{C})^2.
 \]
\end{itemize}
\end{corollary}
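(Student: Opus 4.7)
The plan is to bootstrap from the matroid case via the pointed matroid construction that Ardila's \autoref{Relation-Characteristic-Matroid-Semimatroid} provides. The whole argument has essentially been set up by the paragraph immediately preceding the corollary, so I would just make each link precise.

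First, I would associate to the loopless semimatroid $(E,\mathcal{C},r_{\mathcal{C}})$ a pointed matroid $(\tilde{N},p)$ on $E\sqcup p$ with $\mathcal{C}=\{X\subseteq E\mid p\notin{\rm cl}_{\tilde{N}}(X)\}$ and $r_{\mathcal{C}}=r_{\tilde{N}}|_{\mathcal{C}}$, as in \autoref{Relation-Characteristic-Matroid-Semimatroid}. I would first check that $\tilde{N}$ is itself loopless: the distinguished element $p$ is loopless by construction, and for any $e\in E$ the fact that $e$ is not a loop of $\mathcal{C}$ (i.e.\ $r_{\mathcal{C}}(e)=1$) forces $r_{\tilde{N}}(e)=1$, so no element of $E\sqcup p$ is a loop of $\tilde{N}$. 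In particular, $(t-1)\mid\chi(\tilde{N};t)$, and \autoref{Relation-Characteristic-Matroid-Semimatroid} gives
\[
\chi(\mathcal{C};t)=\frac{\chi(\tilde{N};t)}{t-1}=\bar{\chi}(\tilde{N};t).
\]

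Next, I would apply \autoref{Huh} to the loopless matroid $\tilde{N}$, which states exactly that the unsigned coefficients of $\bar{\chi}(\tilde{N};t)$ form a log-concave sequence. Since the above identity equates $\chi(\mathcal{C};t)$ with $\bar{\chi}(\tilde{N};t)$ coefficient by coefficient, this immediately gives the log-concavity inequalities $w_{k-1}(\mathcal{C})w_{k+1}(\mathcal{C})\le w_{k}(\mathcal{C})^{2}$ for $k=1,\ldots,r(\mathcal{C})-1$.

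Finally, to deduce unimodality, I would invoke the standard fact that a log-concave sequence of positive real numbers is automatically unimodal. The positivity hypothesis is supplied by \autoref{Alternate-Sign-Coefficient} (no internal zeros, and in fact every $w_{i}(\mathcal{C})>0$), so there is no obstacle here. I do not expect any serious obstacle in this proof; the only small point requiring care is verifying that the associated pointed matroid $\tilde{N}$ inherits looplessness from $\mathcal{C}$, so that \autoref{Huh} is legitimately applicable to $\bar{\chi}(\tilde{N};t)=\chi(\mathcal{C};t)$.
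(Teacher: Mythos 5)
Your proposal is correct and follows essentially the same route as the paper: identify $\chi(\mathcal{C};t)$ with the reduced characteristic polynomial $\bar{\chi}(\tilde{N};t)$ of the associated pointed matroid via \autoref{Relation-Characteristic-Matroid-Semimatroid}, apply \autoref{Huh}, and deduce unimodality from log-concavity. Your extra care in checking that $\tilde{N}$ is loopless and that the $w_i(\mathcal{C})$ are strictly positive (via \autoref{Alternate-Sign-Coefficient}) is a welcome refinement of the paper's more terse argument.
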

\subsection{Semimatroids associated with assigning matroids}\label{Sec3-1}
This subsection is devoted to investigating the relationship between semimatroids and assigning matroids.  An {\em assigning} $\alpha$ of $M$ is a mapping from $\mathscr{C}(M)$ to the set $\{0,1\}$.  Write $\alpha\equiv0$ if $\alpha(C)=0$ for all circuits $C$ of $M$, or if $M$ contains no circuits. An {\em assigning matroid} $(M,\alpha)$ is an ordered pair consisting of a matroid $M$ and its assigning $\alpha$. Let 
\[
\mathscr{C}(M,\alpha):=\big\{C\in\mathscr{C}(M)\mid \alpha(C)=0\big\},
\] 
called the {\em compatible circuit set} of $(M,\alpha)$.  A circuit in $\mathscr{C}(M,\alpha)$, and more broadly any submatroid of $M$ or subset of $E(M)$  whose circuits are in $\mathscr{C}(M,\alpha)$, is {\em $\alpha$-compatible} (or simply {\em compatible}) and {\em incompatible} otherwise. We consistently use the notation $\mathcal{C}(M,\alpha)$ to denote the set of all compatible subsets of $(M,\alpha)$, i.e.,
\[
\mathcal{C}(M,\alpha):=\big\{X\subseteq E(M)\mid X \text{ is compatible}\big\}.
\]
It is clear that if $X$ is compatible, then every subset of $X$ is compatible. Thus,  $\mathcal{C}(M,\alpha)$ is a simplicial complex. Furthermore, we define the {\em rank function} $r_{M,\alpha}$ of $(M,\alpha)$ as the restriction of the rank function $r_M$ to the set $\mathcal{C}(M,\alpha)$.

Extending the characteristic and Tutte polynomials of matroids to assigning matroids, we propose the compatible characteristic and Tutte polynomials of assigning matroids. Indeed, the compatible characteristic and Tutte polynomials of $(M,\alpha)$ in \autoref{Compatible-Characteristic-Def} align with the classical characteristic and Tutte polynomials of $M$ whenever $M$ is compatible, respectively.
\begin{definition}\label{Compatible-Characteristic-Def}
{\rm Let $(M,\alpha)$ be an assigning matroid. 
\begin{itemize}
\item The {\em compatible characteristic polynomial}  of $(M,\alpha)$ is defined as 
\[
\chi(M,\alpha;t):=\sum_{X\in\mathcal{C}(M,\alpha)}(-1)^{|X|}t^{r(M)-r_M(X)}.
\] 
Particularly,  define $\chi(M,\alpha;t)=1$ for the empty matroid $M$ with $E(M)=\emptyset$.
\item The {\em compatible Tutte polynomial} of $(M,\alpha)$ is defined by
\[
T(M,\alpha;t,s):=\sum_{X\in\mathcal{C}(M,\alpha)}(t-1)^{r(M)-r_M(X)}(s-1)^{|X|-r_M(X)}.
\]
Particularly, we define $T(M,\alpha;t,s)=1$ for the empty matroid $M$.
\end{itemize}
}
\end{definition}
In enumerative problems concerning colorings (tensions) and flows, the compatible characteristic polynomial is crucial, analogous to the chromatic (tension) polynomial and the flow polynomial in \cite{Tutte1954}. Specifically, the compatible characteristic polynomial for a graphic matroid associated with its admissible assigning agrees with the assigning polynomial in Theorem 1 of \cite{Kochol2024}, which enumerates certain nowhere-zero chains and $(A,f)$-tensions, corresponding to the potential differences of the $(A,f)$-coloring. Dually, \cite[Theorem 1.3]{FRW2025}, \cite[Theorem 1]{Kochol2022} and \cite[Section 5]{Kochol2024} showed that the compatible characteristic polynomial for a cographic matroid related to its admissible assigning corresponds to the assigning polynomial that counts nowhere-zero chains in graphs---nonhomogeneous analogues of nowhere-zero flows.

We now start by showing how we construct an assigning matroid from a semimatroid. It is clear that the semimatroid $\mathcal{C}$ naturally induces an assigning $\alpha_{\mathcal{C}}$ of $M_{\mathcal{C}}$ such that for any circuit $C$ of $M_{\mathcal{C}}$,
\[
\alpha_\mathcal{C}(C):=
\begin{cases}
0,&\text{if $C\in\mathcal{C}$};\\
1,&\text{otherwise}.
\end{cases}
\]
In other words, the semimatroid $\mathcal{C}$ defines an assigning matroid $(M_\mathcal{C},\alpha_{\mathcal{C}})$. On the other hand, the following theorem demonstrates how to recover the given semimatroid $\mathcal{C}$ from the assigning matroid $(M_\mathcal{C},\alpha_{\mathcal{C}})$.  
\begin{theorem}\label{Semimatroid-Inverse}
Let $(E,\mathcal{C},r_{\mathcal{C}})$  be a semimatroid. Then, the triple $\big(E,\mathcal{C}(M_\mathcal{C},\alpha_\mathcal{C}),r_{M_\mathcal{C},\alpha_\mathcal{C}}\big)$ is a semimatroid and coincides with the semimatroid $(E,\mathcal{C},r_{\mathcal{C}})$.
\end{theorem}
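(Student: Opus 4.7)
The plan is to prove both assertions of the theorem simultaneously by verifying the two equalities $\mathcal{C}(M_{\mathcal{C}},\alpha_{\mathcal{C}}) = \mathcal{C}$ and $r_{M_{\mathcal{C}},\alpha_{\mathcal{C}}} = r_{\mathcal{C}}$. Once these are established, the triple $\big(E, \mathcal{C}(M_{\mathcal{C}},\alpha_{\mathcal{C}}), r_{M_{\mathcal{C}},\alpha_{\mathcal{C}}}\big)$ automatically inherits the axioms (SR1)--(SR5) from the assumed semimatroid $(E,\mathcal{C},r_{\mathcal{C}})$, bypassing any direct check of the axioms.

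The easy containment $\mathcal{C} \subseteq \mathcal{C}(M_{\mathcal{C}},\alpha_{\mathcal{C}})$ is immediate: if $X \in \mathcal{C}$, the simplicial complex property of $\mathcal{C}$ forces every subset of $X$, and in particular every circuit of $M_{\mathcal{C}}$ contained in $X$, to lie in $\mathcal{C}$, so by the definition of $\alpha_{\mathcal{C}}$ the set $X$ is compatible. For the rank identity, given $X \in \mathcal{C}$, the extension rule \eqref{Rank-Extending} gives $r(X) \ge r_{\mathcal{C}}(X)$ by choosing the witness $Y = X$, while monotonicity (SR2) shows that the maximum in \eqref{Rank-Extending} cannot exceed $r_{\mathcal{C}}(X)$; hence $r_{M_{\mathcal{C}},\alpha_{\mathcal{C}}}(X) = r_{\mathcal{C}}(X)$.

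The substantive step is the reverse containment $\mathcal{C}(M_{\mathcal{C}},\alpha_{\mathcal{C}}) \subseteq \mathcal{C}$. Fix a compatible subset $X \subseteq E$. If $X$ is already independent in $M_{\mathcal{C}}$, then $r(X) = |X|$, and unwinding \eqref{Rank-Extending} produces a witness $Y \subseteq X$ with $Y \in \mathcal{C}$ and $r_{\mathcal{C}}(Y) = |X|$; the chain $r_{\mathcal{C}}(Y) \le |Y| \le |X|$ from (SR1) then forces $Y = X$, so $X \in \mathcal{C}$. Otherwise, pick a maximal independent subset $B \subseteq X$ in $M_{\mathcal{C}}$ and, for each $e \in X - B$, let $C_e \subseteq B \cup e$ denote the fundamental circuit. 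Since $C_e \subseteq X$ and $X$ is compatible, each $C_e$ lies in $\mathcal{C}$. Enumerating $X - B = \{e_1,\ldots,e_k\}$ and writing $B_i := B \cup \{e_1,\ldots,e_i\}$, I will prove $B_i \in \mathcal{C}$ by induction on $i$, with base $B_0 = B \in \mathcal{C}$ supplied by the independent case. For the induction step I will apply (SR4) to $C_{e_i}$ and $B_{i-1}$: a short computation gives $C_{e_i} \cap B_{i-1} = C_{e_i} - e_i$, which as a proper subset of a circuit of $M_{\mathcal{C}}$ is independent in $M_{\mathcal{C}}$, hence sits in $\mathcal{C}$ with rank $|C_{e_i}| - 1 = r_{\mathcal{C}}(C_{e_i})$. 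Thus the hypothesis of (SR4) is met and $B_i = B_{i-1} \cup C_{e_i} \in \mathcal{C}$; taking $i = k$ gives $X \in \mathcal{C}$.

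The main obstacle I anticipate is precisely arranging the inductive step so that the hypothesis of (SR4) holds, which hinges on recognizing that the fundamental circuits $C_{e_i}$ are genuine circuits of $M_{\mathcal{C}}$ inside $X$ (so compatibility places them in $\mathcal{C}$), and that their $r_{\mathcal{C}}$-ranks coincide with their $M_{\mathcal{C}}$-ranks (which is guaranteed by the rank identity established in the second paragraph). Once these identifications are in place the inductive argument goes through cleanly, both equalities follow, and the theorem is complete.
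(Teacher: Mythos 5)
Your proof is correct and follows essentially the same route as the paper's: both reduce the theorem to the two identities $\mathcal{C}(M_\mathcal{C},\alpha_\mathcal{C})=\mathcal{C}$ and $r_{M_\mathcal{C},\alpha_\mathcal{C}}=r_{\mathcal{C}}$, and both settle the nontrivial inclusion by taking a maximal independent subset of a compatible set $X$ (using $\mathscr{I}(M_\mathcal{C})=\mathscr{I}(\mathcal{C})$) and exploiting that the fundamental circuits lie inside $X$ and are therefore forced into $\mathcal{C}$ by compatibility. The only difference is presentational: the paper argues by contradiction and leaves the passage from ``all fundamental circuits of $X$ lie in $\mathcal{C}$'' to ``$X\in\mathcal{C}$'' implicit, whereas your explicit induction via (SR4) supplies exactly that detail --- a welcome sharpening rather than a different idea.
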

\begin{proof}
The proof reduces to proving that \[\big(E,\mathcal{C}(M_\mathcal{C},\alpha_\mathcal{C}),r_{M_\mathcal{C},\alpha_\mathcal{C}}\big)=(E,\mathcal{C},r_{\mathcal{C}}).\]
The definition of $r_{M_{\mathcal{C}}}$ in \eqref{Rank-Extending} directly implies that the restriction of $r_{M_{\mathcal{C}}}$ to $\mathcal{C}$ coincides with $r_{\mathcal{C}}$. Since $r_{M_{\mathcal{C}},\alpha_{\mathcal{C}}}$ is also the restriction of $r_{M_{\mathcal{C}}}$ to  $\mathcal{C}(M_\mathcal{C},\alpha_{\mathcal{C}})$, it suffices to verify $\mathcal{C}=\mathcal{C}(M_\mathcal{C},\alpha_{\mathcal{C}})$.

The construction of the assigning $\alpha_{\mathcal{C}}$ directly leads to $\mathcal{C}\subseteq\mathcal{C}(M_\mathcal{C},\alpha_{\mathcal{C}})$. For the opposite inclusion,  arguing by negation, suppose there exists $X\in\mathcal{C}(M_\mathcal{C},\alpha_{\mathcal{C}})-\mathcal{C}$. Applying \eqref{Rank-Extending} again, we derive $\mathscr{I}(M_\mathcal{C})=\mathscr{I}(\mathcal{C})$. Therefore, we can choose an independent set $I$ in $\mathcal{C}$ such that $I\subseteq X$ and  $r_{M_\mathcal{C}}(X)=r_{\mathcal{C}}(I)=|I|$. As $X\notin\mathcal{C}$, there exist $e\in X-I$ and a circuit $C$ of $M_\mathcal{C}$ for which $e\in C\subseteq I\sqcup e\subseteq X$ and $C\notin\mathcal{C}$. Additionally, we can easily observe from \eqref{Rank-Extending} and the definition of $\alpha_\mathcal{C}$ that every circuit of $\mathcal{C}$ is precisely a compatible circuit of the assigning matroid $(M_\mathcal{C},\alpha_\mathcal{C})$, and vice versa, i.e., $\mathscr{C}(\mathcal{C})=\mathscr{C}(M_\mathcal{C},\alpha_{\mathcal{C}})$. It follows $C\notin \mathscr{C}(M_\mathcal{C},\alpha_{\mathcal{C}})$. This deduces $X\notin\mathcal{C}(M_\mathcal{C},\alpha_{\mathcal{C}})$, a contradiction. So, $\mathcal{C}=\mathcal{C}(M_\mathcal{C},\alpha_{\mathcal{C}})$ holds. 
\end{proof}
Following \autoref{Semimatroid-Inverse}, a semimatroid is essentially an assigning matroid. However, an assigning matroid is not necessarily a semimatroid, as illustrated in the next example. 
\begin{example}\label{Eg2}
{\rm
Consider the uniform matroid $U_{2,4}$ with rank $2$ and ground set $E=\{e_i\mid i=1,2,3,4\}$. Clearly, $\mathscr{C}(U_{2,4})=\{C_i=E-e_i\mid i=1,2,3,4\}$. \autoref{Table1} shows that $\mathcal{C}(U_{2,4},\alpha)$ is a semimatroid if at most one $C_i$ has $\alpha(C_i)=0$ or if $\alpha\equiv0$, but it is not a semimatroid if two or three $C_i$ have $\alpha(C_i)=0$.
\begin{table}[H]
\centering
\begin{tabular}{|c|c|c|c|}
\hline
Assigning Matroid $(U_{2,4},\alpha)$  & $\mathcal{C}(U_{2,4},\alpha)$ &Semimatroid \\ \hline
$\alpha(C_i)=1,i=1,2,3,4$ & $2^E-\mathscr{C}(U_{2,4})-E$&\text{Yes}  \\ \hline
$\alpha(C_1)=0,\;\alpha(C_i)=1,i=2,3,4$ & $2^E-\mathscr{C}(U_{2,4})-E\sqcup C_1$&\text{Yes}  \\ \hline 
$\alpha(C_2)=0,\;\alpha(C_i)=1,i=1,3,4$&$2^E-\mathscr{C}(U_{2,4})-E\sqcup C_2$&\text{Yes}  \\ \hline
$\alpha(C_3)=0,\;\alpha(C_i)=1,i=1,2,4$& $2^E-\mathscr{C}(U_{2,4})-E\sqcup C_3$&\text{Yes} \\ \hline
$\alpha(C_4)=0,\;\alpha(C_i)=1,i=1,2,3$& $2^E-\mathscr{C}(U_{2,4})-E\sqcup C_4$&\text{Yes} \\ \hline
$\alpha(C_1)=\alpha(C_2)=0,\;\alpha(C_3)=\alpha(C_4)=1$& $2^E-C_3-C_4-E$&\text{No} \\ \hline
$\alpha(C_1)=\alpha(C_3)=0,\;\alpha(C_2)=\alpha(C_4)=1$& $2^E-C_2-C_4-E$&\text{No}  \\ \hline
$\alpha(C_1)=\alpha(C_4)=0,\;\alpha(C_2)=\alpha(C_3)=1$& $2^E-C_2-C_3-E$&\text{No} \\ \hline
$\alpha(C_2)=\alpha(C_3)=0,\;\alpha(C_1)=\alpha(C_4)=1$& $2^E-C_1-C_4-E$ &\text{No} \\ \hline
$\alpha(C_2)=\alpha(C_4)=0,\;\alpha(C_1)=\alpha(C_3)=1$& $2^E-C_1-C_3-E$&\text{No}  \\ \hline
$\alpha(C_3)=\alpha(C_4)=0,\;\alpha(C_1)=\alpha(C_2)=1$& $2^E-C_1-C_2-E$&\text{No}  \\ \hline
$\alpha(C_i)=0,i=1,2,3,\;\alpha(C_4)=1$& $2^E-C_4-E$&\text{No} \\ \hline
$\alpha(C_i)=0,i=1,2,4,\;\alpha(C_3)=1$& $2^E-C_3-E$ &\text{No} \\ \hline
$\alpha(C_i)=0,i=1,3,4,\;\alpha(C_2)=1$& $2^E-C_2-E$&\text{No} \\ \hline
$\alpha(C_i)=0,i=2,3,4,\;\alpha(C_1)=1$& $2^E-C_1-E$ &\text{No} \\ \hline
$\alpha(C_i)=0,i=1,2,3,4$& $2^E$&\text{Yes}  \\ \hline 
\end{tabular}
\caption{Assigning matroids $(U_{2,4},\alpha)$ and the corresponding semimatroids $\mathcal{C}(U_{2,4},\alpha)$.}
\label{Table1}
\end{table}
}
\end{example}
Based on \autoref{Semimatroid-Inverse}, it is straightforward to see that the Tutte and characteristic polynomials of a semimatroid $\mathcal{C}$ align with  the compatible Tutte and characteristic polynomials of the assigning matroid $(M_{\mathcal{C}},\alpha_\mathcal{C})$, respectively. 
\begin{corollary}\label{Relation-Compatible-Semimatroid1}
Let $(E,\mathcal{C},r_{\mathcal{C}})$ be a semimatroid. Then
\begin{itemize}
\item[{\rm(a)}] The Tutte polynomial $T(\mathcal{C};t,s)$ of  the semimatroid $\mathcal{C}$ coincides with the compatible Tutte polynomial $T(M_{\mathcal{C}},\alpha_{\mathcal{C}};t,s)$ of the assigning matroid $(M_{\mathcal{C}},\alpha_\mathcal{C})$.
\item[{\rm(b)}] The characteristic polynomial $\chi(\mathcal{C};t)$ of  the semimatroid $\mathcal{C}$ coincides with the compatible characteristic polynomial $\chi(M_{\mathcal{C}},\alpha_{\mathcal{C}};t)$ of the assigning matroid $(M_{\mathcal{C}},\alpha_\mathcal{C})$.
\end{itemize}
\end{corollary}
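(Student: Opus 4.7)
The plan is to observe that this corollary is a nearly immediate consequence of \autoref{Semimatroid-Inverse} combined with the defining formulas. The corollary splits naturally into parts (a) and (b), but both reduce to the same term-by-term comparison of sums indexed over the same collection of sets with the same rank values.

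First I would invoke \autoref{Semimatroid-Inverse} to record the two key equalities:
\[
\mathcal{C} \;=\; \mathcal{C}(M_\mathcal{C},\alpha_\mathcal{C}),\qquad r_{\mathcal{C}}(X) \;=\; r_{M_\mathcal{C},\alpha_\mathcal{C}}(X) \text{ for all } X\in\mathcal{C}.
\]
Next I would verify that the \emph{total} ranks also coincide, i.e.\ $r(\mathcal{C})=r(M_\mathcal{C})$. This is the one small gap not handed to us directly: the definitions of Tutte/characteristic polynomials of a semimatroid use $r(\mathcal{C})$, while those for an assigning matroid use $r(M)$. Fortunately, the proof of \autoref{Semimatroid-Inverse} already points out that $\mathscr{I}(M_\mathcal{C})=\mathscr{I}(\mathcal{C})$ (via \eqref{Rank-Extending}); since the rank of a (semi)matroid equals the maximum size of an independent set, this forces $r(\mathcal{C})=r(M_\mathcal{C})$.

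With these three identifications in hand, part (a) follows by substituting into the definition of $T(M_\mathcal{C},\alpha_\mathcal{C};t,s)$:
\[
T(M_\mathcal{C},\alpha_\mathcal{C};t,s) \;=\; \sum_{X\in\mathcal{C}(M_\mathcal{C},\alpha_\mathcal{C})} (t-1)^{r(M_\mathcal{C})-r_{M_\mathcal{C}}(X)}(s-1)^{|X|-r_{M_\mathcal{C}}(X)} \;=\; T(\mathcal{C};t,s),
\]
and part (b) follows by the identical substitution into $\chi(M_\mathcal{C},\alpha_\mathcal{C};t)$. The empty-ground-set boundary cases match because both definitions set the polynomials to $1$ when $E=\emptyset$. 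Alternatively, one could derive (b) from (a) by using the relation $\chi(\mathcal{C};t)=(-1)^{r(\mathcal{C})}T(\mathcal{C};1-t,0)$ together with the analogous identity for assigning matroids.

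There is essentially no obstacle — the only thing to be careful about is not conflating $r_\mathcal{C}$ (a function defined only on $\mathcal{C}$) with the extended rank function $r=r_{M_\mathcal{C}}$ (defined on all of $2^E$); once we restrict attention to $X\in\mathcal{C}$, the two agree, which is exactly the content of \autoref{Semimatroid-Inverse}. Thus the proof fits comfortably in a few lines, and the corollary can be presented as a direct translation of \autoref{Semimatroid-Inverse} to the polynomial level.
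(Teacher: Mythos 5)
Your proposal is correct and matches the paper's (implicit) argument: the paper offers no written proof, stating only that the corollary is ``straightforward'' from \autoref{Semimatroid-Inverse}, and the intended reasoning is exactly your term-by-term identification of the two sums via $\mathcal{C}=\mathcal{C}(M_\mathcal{C},\alpha_\mathcal{C})$ and the agreement of the rank functions. Your extra observation that $r(\mathcal{C})=r(M_\mathcal{C})$ (via $\mathscr{I}(M_\mathcal{C})=\mathscr{I}(\mathcal{C})$) is a detail the paper leaves unstated but is needed and correctly justified.
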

According to \autoref{Relation-Compatible-Semimatroid1}, the properties of the characteristic and Tutte polynomials of semimatroids $\mathcal{C}$  are readily applicable to the compatible characteristic and Tutte polynomials of the corresponding assigning matroids $(M_\mathcal{C},\alpha_\mathcal{C})$. 
\section{Convolution formulae}\label{Sec4}
This subsection is devoted to examining convolution identities for multiplicative characteristic polynomials and Tutte polynomials of semimatroids. A key technique in our work is the Möbius conjugation of posets, first created  by Wang \cite{Wang2015}.

We start by reviewing the M\"obius conjugation. Let $P$ be a locally finite poset and $R$ be a ring with identity. An {\em interval } $[x,y]$ in $P$ is the set of all elements $z\in P$ such that $ x\le z\le y$. Define ${\rm Int}(P)$ as the interval space of $P$, and $\mathcal{I}(P,R):=\big\{\beta:{\rm Int}(P)\to R\big\}$ as the incidence algebra of $P$ whose multiplication structure is given by the convolution product. Specifically, for any $\beta,\gamma\in \mathcal{I}(P,R)$ and $x\le y$ in $P$,
\begin{equation}\label{Mobius-Product-Def}
\beta*\gamma(x,y)=\sum_{x\le z\le y}\beta(x,z)\gamma(z,y).
\end{equation}
Further define the {\em M\"obius conjugation} $\mu^*:R^P\to \mathcal{I}(P,R)$ to be
\begin{equation}\label{Mobius-Conjugation-Def}
\mu^*(f):=\mu*\delta(f)*\zeta,\quad f\in R^P,
\end{equation}
where $\zeta$, the convolution inverse of the M\"obius function $\mu$ of $P$, is given by $\zeta(x,y)=1$ for any $x\le y$ in $P$; and the mapping $\delta:R^P\to \mathcal{I}(P,R)$ is defined by that $\delta(f)(x,y)$ equals $f(x)$ if $x=y$ and $0$ otherwise for all $f\in R^P$ and $x\le y$ in $P$. Wang showed that the M\"obius conjugation is a ring homomorphism in \cite[Theorem 1.1]{Wang2015}.
\begin{proposition}[\cite{Wang2015}, Theorem 1.1]\label{Mobius-Conjugation}
With above settings, the M\"obius conjugation $\mu^*$ is a ring homomorphism, i.e.,
\[
\mu^*(fg)=\mu^*(f)*\mu^*(g),\quad\forall f,g\in R^P.
\]
\end{proposition}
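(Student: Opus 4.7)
The plan is to reduce the identity $\mu^*(fg)=\mu^*(f)*\mu^*(g)$ to a short formal computation inside the incidence algebra $\mathcal{I}(P,R)$. The key structural observation is that $\mu^*$ is built as a conjugation-type expression $\mu*\delta(\cdot)*\zeta$ with $\zeta$ the convolution inverse of $\mu$; hence in the expanded product $\mu^*(f)*\mu^*(g)$ the middle factors $\zeta*\mu$ will cancel, leaving $\mu*\delta(f)*\delta(g)*\zeta$. It therefore suffices to establish the multiplicativity property $\delta(f)*\delta(g)=\delta(fg)$ of the embedding $\delta\colon R^P\to\mathcal{I}(P,R)$.

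First I would verify multiplicativity of $\delta$ by unwinding the definition \eqref{Mobius-Product-Def}. For any $x\le y$ in $P$,
\[
\bigl(\delta(f)*\delta(g)\bigr)(x,y)=\sum_{x\le z\le y}\delta(f)(x,z)\,\delta(g)(z,y).
\]
The factor $\delta(f)(x,z)$ vanishes unless $x=z$, and the factor $\delta(g)(z,y)$ vanishes unless $z=y$; hence the sum is zero whenever $x<y$, and at $x=y$ the unique surviving summand (at $z=x=y$) contributes $f(x)g(x)=(fg)(x)$. This matches $\delta(fg)(x,y)$ exactly, so $\delta(f)*\delta(g)=\delta(fg)$.

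Second, I would invoke associativity of convolution together with the defining M\"obius identity $\zeta*\mu=\mu*\zeta$ being the Kronecker delta on ${\rm Int}(P)$---the identity element of $\mathcal{I}(P,R)$---to compute
\[
\mu^*(f)*\mu^*(g)=\bigl(\mu*\delta(f)*\zeta\bigr)*\bigl(\mu*\delta(g)*\zeta\bigr)=\mu*\delta(f)*\delta(g)*\zeta=\mu*\delta(fg)*\zeta=\mu^*(fg),
\]
which is the desired identity.

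Since the argument is essentially the formal statement that conjugation by a unit is an algebra homomorphism, I do not anticipate any substantive obstacle. The only step that is not purely formal is the multiplicativity of $\delta$, which uses both the diagonal support of $\delta$-elements and the product in $R$; the remaining manipulations depend only on associativity of convolution in $\mathcal{I}(P,R)$ and the defining recursion of $\mu$, both of which are standard facts about incidence algebras of locally finite posets.
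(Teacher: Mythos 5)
Your proof is correct. The paper does not actually prove this proposition---it is quoted from \cite{Wang2015} as a known result---and your argument, namely establishing $\delta(f)*\delta(g)=\delta(fg)$ from the diagonal support of $\delta$ and then cancelling the middle factor $\zeta*\mu$ using associativity and the fact that $\mu$ and $\zeta$ are two-sided convolution inverses, is exactly the standard conjugation argument underlying Wang's theorem; no gaps.
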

Below we describe some properties of their closure. Although these properties are similar to those of matroids, we could not find an explicit source.
\begin{proposition}\label{Contraction-Deletion-Closure}
Let $(E,\mathcal{C},r_{\mathcal{C}})$ be a semimatroid and $X\in\mathcal{C}$. Then
\begin{itemize}
\item [{\rm(a)}] ${\rm cl}_{\mathcal{C}|X}(Y)={\rm cl}_{\mathcal{C}}(Y)\cap X$ for  every $Y\in\mathcal{C}|X$.
\item [{\rm(b)}] ${\rm cl}_{\mathcal{C}/X}(Y)={\rm cl}_{\mathcal{C}}(Y\sqcup X)-X$ for  every $Y\in\mathcal{C}/X$.
\end{itemize}
\end{proposition}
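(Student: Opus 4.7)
The plan is to verify both parts purely by unfolding the definitions of closure, deletion, and contraction; no deeper structural property of semimatroids is needed beyond the formulas \eqref{Deletion1}--\eqref{Contraction-Rank-Semi} and the definition of $\mathrm{cl}_\mathcal{C}$.

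For part (a), I would start from
\[
\mathrm{cl}_{\mathcal{C}|X}(Y)=\bigl\{e\in X\mid Y\cup e\in\mathcal{C}|X,\ r_{\mathcal{C}|X}(Y\cup e)=r_{\mathcal{C}|X}(Y)\bigr\}.
\]
Since $Y\subseteq X$ and $e\in X$, one has $Y\cup e\subseteq X$ automatically, so by \eqref{Deletion1} the condition $Y\cup e\in\mathcal{C}|X$ is equivalent to $Y\cup e\in\mathcal{C}$. By \eqref{Deletion-Rank-Semi} the rank equality becomes $r_\mathcal{C}(Y\cup e)=r_\mathcal{C}(Y)$. Intersecting the defining conditions of $\mathrm{cl}_\mathcal{C}(Y)$ with $X$ then yields exactly $\mathrm{cl}_{\mathcal{C}|X}(Y)$.

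For part (b), I would proceed analogously, starting from
\[
\mathrm{cl}_{\mathcal{C}/X}(Y)=\bigl\{e\in E-X\mid Y\cup e\in\mathcal{C}/X,\ r_{\mathcal{C}/X}(Y\cup e)=r_{\mathcal{C}/X}(Y)\bigr\}.
\]
By \eqref{Contraction1}, the membership $Y\cup e\in\mathcal{C}/X$ is equivalent to $(Y\sqcup X)\cup e\in\mathcal{C}$, and by \eqref{Contraction-Rank-Semi} the $r_\mathcal{C}(X)$ terms cancel in the rank equality, reducing it to $r_\mathcal{C}((Y\sqcup X)\cup e)=r_\mathcal{C}(Y\sqcup X)$. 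Thus $\mathrm{cl}_{\mathcal{C}/X}(Y)$ is the intersection of $\mathrm{cl}_\mathcal{C}(Y\sqcup X)$ with $E-X$. To finish, I would observe that $X\subseteq\mathrm{cl}_\mathcal{C}(Y\sqcup X)$: for any $e\in X$ we have $(Y\sqcup X)\cup e=Y\sqcup X\in\mathcal{C}$ with the same rank, so $e$ lies in the closure. Consequently the intersection with $E-X$ coincides with $\mathrm{cl}_\mathcal{C}(Y\sqcup X)-X$, giving the desired equality.

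The only real care needed is bookkeeping of the disjoint-union notation and the set over which $e$ is quantified; in particular making sure that in part (b) the cancellation of $r_\mathcal{C}(X)$ is legitimate because $Y\sqcup X\in\mathcal{C}$ guarantees that $r_\mathcal{C}(X)$ is defined (as $X\subseteq Y\sqcup X\in\mathcal{C}$ and $\mathcal{C}$ is a simplicial complex). I do not anticipate a genuine obstacle: the proposition is a definitional compatibility, and both identities should follow in a few lines once the notation is expanded.
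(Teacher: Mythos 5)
Your proof is correct and follows essentially the same route as the paper, which likewise treats both parts as immediate consequences of unfolding the definitions of $\mathcal{C}|X$, $\mathcal{C}/X$, their rank functions, and the closure operator. (Your observation that $X\subseteq\mathrm{cl}_{\mathcal{C}}(Y\sqcup X)$ is not actually needed, since $A\cap(E-X)=A-X$ for any $A\subseteq E$, but it does no harm.)
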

\begin{proof}
Part (a) is an immediate consequence of the fact that the rank function $r_{\mathcal{C}|X}$ is the restriction of $r_{\mathcal{C}}$ to the set $\mathcal{C}|X$. Analogously, part (b) follows from that $r_{\mathcal{C}/X}(Y)=r_{\mathcal{C}}(Y\sqcup X)-r_{\mathcal{C}}(X)$.  This completes the proof.
\end{proof}
Furthermore, as shown in \cite[Theorem 6.4]{Ardila2007}, the poset $\mathscr{L}(\mathcal{C})$ of flats of a semimatroid $(E,\mathcal{C},r_{\mathcal{C}})$ is a geometric semilattice. By artificially adding an additional maximal element $\hat{1}$ to $\mathscr{L}(\mathcal{C})$, the poset $\mathscr{L}(\mathcal{C})\sqcup\hat{1}$ forms a geometric lattice, denoted by $\hat{\mathscr{L}}(\mathcal{C})$. We often use the notation $\hat{0}$ to denote the unique minimal element in $\mathscr{L}(C)$ and $\hat{\mathscr{L}}(C)$. From now on, we assume that $r_{\mathcal{C}}(\hat{1})=\infty$ and $t^{-\infty}=0$. Following this, the characteristic polynomial $\chi(\mathcal{C};t)\in \mathbb{Z}[t]$ of $\mathcal{C}$ can be written in the form 
\[
\chi(\mathcal{C},t)=\sum_{X\in\hat{\mathscr{L}}(\mathcal{C})}\mu(X)t^{r(\mathcal{C})-r_{\mathcal{C}}(X)}.
\]
We proceed to establish several poset isomorphisms concerning $\mathscr{L}(\mathcal{C})$ and $\hat{\mathscr{L}}(\mathcal{C})$.
\begin{lemma}\label{Cong}
Let $(E,\mathcal{C},r_{\mathcal{C}})$ be a semimatroid. Then
\begin{itemize}
\item [{\rm(a)}] For any $X\in\hat{\mathscr{L}}(\mathcal{C})$, $[\hat{0},X]$ is $\mathscr{L}(\mathcal{C}|X)$ if $X\in\mathscr{L}(\mathcal{C})$ and $\hat{\mathscr{L}}(\mathcal{C})$ otherwise.
\item [{\rm(b)}] For any $X\in\hat{\mathscr{L}}(\mathcal{C})$, $[X,\hat{1}]\cong\mathscr{L}(\mathcal{C}/X)\sqcup\hat{1}$.
\end{itemize}
\end{lemma}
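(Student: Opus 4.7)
The plan is to prove both parts by identifying the specified intervals directly with the target posets via Proposition \ref{Contraction-Deletion-Closure}, handling the artificial top $\hat{1}$ as a trivially appended case.

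For part (a), the case $X = \hat{1}$ is immediate from $[\hat{0},\hat{1}] = \hat{\mathscr{L}}(\mathcal{C})$. For $X \in \mathscr{L}(\mathcal{C})$ I would establish the set equality $[\hat{0}, X] = \mathscr{L}(\mathcal{C}|X)$, with both posets ordered by inclusion. If $Y \in \mathscr{L}(\mathcal{C})$ is contained in $X$, then Proposition \ref{Contraction-Deletion-Closure}(a) gives ${\rm cl}_{\mathcal{C}|X}(Y) = {\rm cl}_{\mathcal{C}}(Y) \cap X = Y \cap X = Y$, so $Y \in \mathscr{L}(\mathcal{C}|X)$. Conversely, given $Y \in \mathscr{L}(\mathcal{C}|X)$, the same identity gives ${\rm cl}_{\mathcal{C}}(Y) \cap X = Y$, so to conclude $Y \in \mathscr{L}(\mathcal{C})$ it suffices to verify ${\rm cl}_{\mathcal{C}}(Y) \subseteq X$.

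The main obstacle is thus a monotonicity-of-closure step: if $Y \subseteq X$ with $X \in \mathscr{L}(\mathcal{C})$, then ${\rm cl}_{\mathcal{C}}(Y) \subseteq {\rm cl}_{\mathcal{C}}(X) = X$. While this is standard in matroid theory, in the semimatroid setting it is conditioned on central-set membership and needs a short axiomatic argument. Suppose for contradiction that $e \in {\rm cl}_{\mathcal{C}}(Y) \setminus X$; then $Y \cup \{e\} \in \mathcal{C}$ with rank $r_{\mathcal{C}}(Y)$. Since $e \notin X$, the intersection $X \cap (Y \cup \{e\})$ equals $Y$, whose rank matches $r_{\mathcal{C}}(Y \cup \{e\})$; so (SR4) yields $X \cup \{e\} \in \mathcal{C}$. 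Then (SR3) applied to $X$ and $Y \cup \{e\}$ gives $r_{\mathcal{C}}(Y) + r_{\mathcal{C}}(X \cup \{e\}) \le r_{\mathcal{C}}(X) + r_{\mathcal{C}}(Y)$, and combined with (SR2) this forces $r_{\mathcal{C}}(X \cup \{e\}) = r_{\mathcal{C}}(X)$, whence $e \in {\rm cl}_{\mathcal{C}}(X) = X$, contradicting $e \notin X$.

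For part (b), the case $X = \hat{1}$ reduces both sides to $\{\hat{1}\}$. For $X \in \mathscr{L}(\mathcal{C})$ I would define $\phi : [X, \hat{1}] \to \mathscr{L}(\mathcal{C}/X) \sqcup \hat{1}$ by $\phi(Y) = Y - X$ for $Y \in \mathscr{L}(\mathcal{C})$ with $Y \supseteq X$, and $\phi(\hat{1}) = \hat{1}$. Proposition \ref{Contraction-Deletion-Closure}(b) gives ${\rm cl}_{\mathcal{C}/X}(Y - X) = {\rm cl}_{\mathcal{C}}(Y) - X = Y - X$, so $\phi(Y) \in \mathscr{L}(\mathcal{C}/X)$; the inverse sends $Z \in \mathscr{L}(\mathcal{C}/X)$ to $Z \sqcup X$, which lies in $\mathscr{L}(\mathcal{C}) \cap [X, \hat{1}]$ by the same identity applied in reverse. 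Both maps are translations by the fixed set $X$, so they preserve inclusion, and the top is sent to the top; hence $\phi$ is the desired order isomorphism.
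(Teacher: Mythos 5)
Your proof is correct and follows essentially the same route as the paper: both identify $[\hat{0},X]$ with $\mathscr{L}(\mathcal{C}|X)$ and $[X,\hat{1}]$ with $\mathscr{L}(\mathcal{C}/X)\sqcup\hat{1}$ via the closure formulas of \autoref{Contraction-Deletion-Closure}, with the top element handled trivially. The only difference is that you supply an explicit (SR3)/(SR4) argument for the monotonicity step ${\rm cl}_{\mathcal{C}}(Y)\subseteq{\rm cl}_{\mathcal{C}}(X)=X$, which the paper merely asserts; that added detail is correct and fills a genuine gap in the paper's exposition.
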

\begin{proof}
When $X\in\mathscr{L}(\mathcal{C})$ and $Y\subseteq X$,  we have ${\rm cl}_{\mathcal{C}}(Y)\subseteq X$. It follows from part (a) of \autoref{Contraction-Deletion-Closure} that ${\rm cl}_{\mathcal{C}|X}(Y)={\rm cl}_{\mathcal{C}}(Y)$. Therefore, \[\mathscr{L}(\mathcal{C}|X)=\big\{{\rm cl}_{\mathcal{C}|X}(Y)\mid Y\in\mathcal{C}|X\big\}=[\hat{0},X].\] 
The case $X=\hat{1}$ is trivial in part (a).

For part (b), applying part (b) of \autoref{Contraction-Deletion-Closure}, we arrive at \[\mathscr{L}(\mathcal{C}/X)=\big\{{\rm cl}_{\mathcal{C}}(Y\sqcup X)-X\mid Y\in\mathcal{C}/X\big\}.\] This leads to a natural order-preserving bijection from $\mathscr{L}(\mathcal{C}/X)\sqcup\hat{1}$ to $[X,\hat{1}]$, which maps $\hat{1}$ to $\hat{1}$ and each element $Y\in \mathscr{L}(\mathcal{C}/X)$ to $Y\sqcup X\in[X,\hat{1}]$. Hence, the isomorphism $[X,\hat{1}]\cong\mathscr{L}(\mathcal{C}/X)\sqcup\hat{1}$ holds.
\end{proof}
Now, let us return to the following convolution formula for the multiplicative characteristic polynomial $\chi(\mathcal{C};ts)$ of $\mathcal{C}$, which agrees with the convolution formula for the multiplicative characteristic polynomial of matroids in \cite[Theorem 4]{Kung2004}. However, \autoref{Semi-CCF} holds for semimatroids, a broader class of objects that includes matroids but is not limited to them. 
\begin{theorem}\label{Semi-CCF}
Let $(E,\mathcal{C},r_{\mathcal{C}})$ be a semimatroid without loops. Then
\[
\chi(\mathcal{C};ts)=\sum_{X\in\mathscr{L}(C)}t^{r(\mathcal{C})-r_{\mathcal{C}}(X)}\chi(\mathcal{C}|X;t)\chi(\mathcal{C}/X;s).
\]
\end{theorem}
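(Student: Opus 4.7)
The plan is to apply Wang's M\"obius conjugation (\autoref{Mobius-Conjugation}) to the augmented lattice $\hat{\mathscr{L}}(\mathcal{C})$, with the ring $R=\mathbb{Z}[t,s]$. Define two functions $f,g\in R^{\hat{\mathscr{L}}(\mathcal{C})}$ by
\[
f(X):=t^{r(\mathcal{C})-r_{\mathcal{C}}(X)},\qquad g(X):=s^{r(\mathcal{C})-r_{\mathcal{C}}(X)},
\]
using the convention $r_{\mathcal{C}}(\hat{1})=\infty$ (so $f(\hat{1})=g(\hat{1})=0$ and $(fg)(\hat{1})=0$). Then the integral formula after \autoref{Mobius-Characteristic-Semi} gives
\[
\mu^{*}(f)(\hat{0},\hat{1})=\chi(\mathcal{C};t),\quad \mu^{*}(g)(\hat{0},\hat{1})=\chi(\mathcal{C};s),\quad \mu^{*}(fg)(\hat{0},\hat{1})=\chi(\mathcal{C};ts).
\]

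Next, apply $\mu^{*}(fg)=\mu^{*}(f)*\mu^{*}(g)$ and the convolution definition \eqref{Mobius-Product-Def} at $(\hat{0},\hat{1})$:
\[
\chi(\mathcal{C};ts)=\sum_{X\in\hat{\mathscr{L}}(\mathcal{C})}\mu^{*}(f)(\hat{0},X)\,\mu^{*}(g)(X,\hat{1}).
\]
The $X=\hat{1}$ contribution vanishes because $\mu^{*}(g)(\hat{1},\hat{1})=g(\hat{1})=0$. So the sum effectively runs over $X\in\mathscr{L}(\mathcal{C})$, and it remains to identify the two factors.

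For $\mu^{*}(f)(\hat{0},X)$ with $X\in\mathscr{L}(\mathcal{C})$, \autoref{Cong}(a) gives $[\hat{0},X]=\mathscr{L}(\mathcal{C}|X)$, so that the ambient M\"obius function restricts to the M\"obius function of this lower interval. Since $r_{\mathcal{C}|X}=r_{\mathcal{C}}|_{\mathcal{C}|X}$ and $r(\mathcal{C}|X)=r_{\mathcal{C}}(X)$, factoring out $t^{r(\mathcal{C})-r_{\mathcal{C}}(X)}$ and using \autoref{Mobius-Characteristic-Semi} (applied to the loopless restriction $\mathcal{C}|X$) yields
\[
\mu^{*}(f)(\hat{0},X)=t^{r(\mathcal{C})-r_{\mathcal{C}}(X)}\chi(\mathcal{C}|X;t).
\]
For $\mu^{*}(g)(X,\hat{1})$, \autoref{Cong}(b) provides an order isomorphism $[X,\hat{1}]\cong\mathscr{L}(\mathcal{C}/X)\sqcup\hat{1}$ sending $Y\mapsto Y-X$, and along it $r(\mathcal{C})-r_{\mathcal{C}}(Y)=r(\mathcal{C}/X)-r_{\mathcal{C}/X}(Y-X)$ by \eqref{Contraction-Rank-Semi}. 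The $\hat{1}$ contribution drops out as before, so \autoref{Mobius-Characteristic-Semi} applied to $\mathcal{C}/X$ gives $\mu^{*}(g)(X,\hat{1})=\chi(\mathcal{C}/X;s)$. Combining these identifications produces the claimed formula.

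The main subtlety I expect is ensuring the right-hand side is well-defined via \autoref{Mobius-Characteristic-Semi}, i.e.\ that both $\mathcal{C}|X$ and $\mathcal{C}/X$ are loopless whenever $X\in\mathscr{L}(\mathcal{C})$. Looplessness of $\mathcal{C}|X$ is immediate since a loop of $\mathcal{C}|X$ is a loop of $\mathcal{C}$. For $\mathcal{C}/X$, any element $e\in E-X$ with $r_{\mathcal{C}/X}(e)=0$ would satisfy $e\in{\rm cl}_{\mathcal{C}}(X)=X$, contradicting $e\notin X$ (this is exactly where the hypothesis that $X$ is a flat is used). Once this is verified, the M\"obius conjugation machinery does the rest with no further calculation.
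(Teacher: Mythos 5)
Your proposal is correct and follows essentially the same route as the paper's proof: both apply the M\"obius conjugation of \autoref{Mobius-Conjugation} to $\hat{\mathscr{L}}(\mathcal{C})$ with the same choices $f(X)=t^{r(\mathcal{C})-r_{\mathcal{C}}(X)}$ and $g(X)=s^{r(\mathcal{C})-r_{\mathcal{C}}(X)}$, identify the factors via \autoref{Cong}, and discard the $\hat{1}$ term using the convention $t^{-\infty}=0$. Your closing remark on looplessness of $\mathcal{C}/X$ for a flat $X$ is a reasonable extra check (it is essentially \autoref{Semi-Contraction-Loop}, which the paper only invokes later for the Tutte convolution), but it does not change the argument.
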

\begin{proof}
We start by defining two mappings $f,g:\hat{\mathscr{L}}(\mathcal{C})\to\mathbb{Z}[t,s]$ such that $f(X)=t^{r(\mathcal{C})-r_\mathcal{C}(X)}$ and $g(X)=s^{r(\mathcal{C})-r_\mathcal{C}(X)}$. It follows from \eqref{Mobius-Product-Def} and \eqref{Mobius-Conjugation-Def} that for any $X\in\hat{\mathscr{L}}(\mathcal{C})$, we have
\begin{align*}
\mu^*(f)(\hat{0},X)&=[\mu*\delta(f)*\zeta](\hat{0},X)\\
&=\sum_{\hat{0}\le Y\le X}[\mu*\delta(f)](\hat{0},Y)\\
&=\sum_{\hat{0}\le Z\le Y\le X}\mu(\hat{0},Z)\delta(f)(Z,Y).
\end{align*}
Combining  that $\delta(f)(Z,Y)$ equals $f(Z)$ if $Z=Y$ and $0$ otherwise, we derive 
\[
\mu^*(f)(\hat{0},X)=\sum_{\hat{0}\le Z\le X}\mu(\hat{0},Z)t^{r(\mathcal{C})-r_\mathcal{C}(Z)}.
\]
Together with part (a) of \autoref{Cong} and the assumption $t^{r(\mathcal{C})-r_{\mathcal{C}}(\hat{1})}=0$, we further deduce
\begin{equation}\label{Key-Equation}
\mu^*(f)(\hat{0},X)=t^{r(\mathcal{C})-r_{\mathcal{C}}(X)}\chi(\mathcal{C}|X;t).
\end{equation}
As with $\mu^*(f)(\hat{0},X)$, applying $[X,\hat{1}]\cong\mathscr{L}(\mathcal{C}/X)\sqcup\hat{1}$ in \autoref{Cong}, we can achieve 
\[
\mu^*(g)(X,\hat{1})=\sum_{X\le Z\le \hat{1}}\mu(X,Z)s^{r(\mathcal{C})-r_\mathcal{C}(Z)}=\chi(\mathcal{C}/X;s).
\]
It follows from \autoref{Mobius-Conjugation} that
\begin{align*}
\chi(\mathcal{C};ts)&=[\mu^*(f)*\mu^*(g)](\hat{0},\hat{1})\\
&=\sum_{X\in\hat{\mathscr{L}}(\mathcal{C})}\mu^*(f)(\hat{0},X)\mu^*(g)(X,\hat{1})\\
&=\sum_{X\in\hat{\mathscr{L}}(\mathcal{C})}t^{r(\mathcal{C})-r_{\mathcal{C}}(X)}\chi(\mathcal{C}|X;t)\chi(\mathcal{C}/X;s).
\end{align*}
Applying $t^{r(\mathcal{C})-r_{\mathcal{C}}(\hat{1})}=0$ again, we obtain that $\chi(\mathcal{C};ts)$ can be written in the form
\[
\chi(\mathcal{C};ts)=\sum_{X\in\mathscr{L}(\mathcal{C})}t^{r(\mathcal{C})-r_{\mathcal{C}}(X)}\chi(\mathcal{C}|X;t)\chi(\mathcal{C}/X;s),
\]
which completes the proof.
\end{proof}

We continue to explore a convolution formula for the Tutte polynomial of semimatroids. Let $\hat{e}$ be an element  that is not contained in $E$ and $\hat{E}:=E\sqcup\hat{e}$. Similarly, we denote by $\hat{\mathcal{C}}$ the new set $\mathcal{C}\sqcup\hat{E}$, obtained from $\mathcal{C}$ by artificially adding an extra element $\hat{e}$. Likewise, we assume  $r_{\mathcal{C}}(\hat{E})=\infty$ and $t^{-\infty}=0$. Immediately, $T(\mathcal{C};t,s)$ can be expressed as
\[
T(\mathcal{C};t,s)=\sum_{X\in\hat{\mathcal{C}}}(t-1)^{r(\mathcal{C})-r_{\mathcal{C}}(X)}(s-1)^{|X|-r_{\mathcal{C}}(X)}.
\]
\begin{theorem}\label{Semi-TCF}
Let $(E,\mathcal{C},r_{\mathcal{C}})$ be a semimatroid. Then
\[
T(\mathcal{C};t,s)=\sum_{X\in\mathcal{C}}T(\mathcal{C}|X;0,s)T(\mathcal{C}/X;t,0).
\]
\end{theorem}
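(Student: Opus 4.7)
The plan is to expand the right-hand side directly using the definitions of the Tutte polynomials and the rank formulas \eqref{Deletion-Rank-Semi} and \eqref{Contraction-Rank-Semi}, then collapse the resulting triple sum via a Boolean cancellation identity. First, since $r(\mathcal{C}|X)=r_\mathcal{C}(X)$ and $r(\mathcal{C}/X)=r(\mathcal{C})-r_\mathcal{C}(X)$, one can write
\[
T(\mathcal{C}|X;0,s)=\sum_{\substack{Y\subseteq X\\Y\in\mathcal{C}}}(-1)^{r_\mathcal{C}(X)-r_\mathcal{C}(Y)}(s-1)^{|Y|-r_\mathcal{C}(Y)}
\]
and
\[
T(\mathcal{C}/X;t,0)=\sum_{\substack{Z\subseteq E-X\\X\sqcup Z\in\mathcal{C}}}(t-1)^{r(\mathcal{C})-r_\mathcal{C}(X\sqcup Z)}(-1)^{|Z|-r_\mathcal{C}(X\sqcup Z)+r_\mathcal{C}(X)}.
\]
The right-hand side of the theorem then becomes a triple sum over $(X,Y,Z)$ with $Y\subseteq X\in\mathcal{C}$ and $X\sqcup Z\in\mathcal{C}$.

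Next I would change variables by setting $W:=X\sqcup Z$, so that $Z=W-X$, $X\subseteq W$, and $|Z|=|W|-|X|$. Because $\mathcal{C}$ is a simplicial complex, the single condition $W\in\mathcal{C}$ already forces both $X\in\mathcal{C}$ and $Y\in\mathcal{C}$, so the sum reindexes cleanly over all triples $(Y,X,W)$ with $Y\subseteq X\subseteq W$ and $W\in\mathcal{C}$. Combining the four $(-1)$-contributions gives the total exponent $2r_\mathcal{C}(X)-r_\mathcal{C}(Y)+|W|-|X|-r_\mathcal{C}(W)$, which modulo $2$ equals $r_\mathcal{C}(Y)+|W|+|X|+r_\mathcal{C}(W)$; crucially the only $X$-dependence is in the factor $(-1)^{|X|}$, while the remaining factors depend only on $Y$ and $W$.

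Finally, fixing $Y$ and $W$ and applying the standard Boolean identity
\[
\sum_{Y\subseteq X\subseteq W}(-1)^{|X|}=(-1)^{|Y|}(1+(-1))^{|W-Y|}=(-1)^{|Y|}[W=Y]
\]
collapses the triple sum to its diagonal $Y=X=W$. There the sign exponent simplifies to $2r_\mathcal{C}(Y)+2|Y|\equiv 0\pmod 2$, and what remains is exactly
\[
\sum_{Y\in\mathcal{C}}(s-1)^{|Y|-r_\mathcal{C}(Y)}(t-1)^{r(\mathcal{C})-r_\mathcal{C}(Y)}=T(\mathcal{C};t,s),
\]
as required. The main obstacle is purely bookkeeping: I must track the four separate sign contributions (two from substituting $t=0$ and $s=0$ into the two Tutte polynomials, one from the contraction rank formula $r_{\mathcal{C}/X}(Z)=r_\mathcal{C}(X\sqcup Z)-r_\mathcal{C}(X)$, and one from $|Z|=|W|-|X|$) and verify that they telescope so that $X$ appears only through $(-1)^{|X|}$. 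Once this is confirmed, the simplicial-complex property of $\mathcal{C}$ makes the reindexing constraint-free, and the Boolean identity finishes the proof. One could alternatively attempt a M\"obius-conjugation argument on the lattice $2^E$ parallel to the proof of \autoref{Semi-CCF}, but because $T(\mathcal{C};t,s)$ sums over central sets rather than over flats, such an approach is less natural here.
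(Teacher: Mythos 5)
Your proof is correct, and it reaches the identity by a genuinely different route from the paper. The paper adjoins an artificial top element $\hat{E}$ to $\mathcal{C}$, defines $f(X)=(1-s)^{|X|-r_{\mathcal{C}}(X)}$ and $g(X)=(1-t)^{r(\mathcal{C})-r_{\mathcal{C}}(X)}$ on the resulting poset $\hat{\mathcal{C}}$, notes that every interval inside $\mathcal{C}$ is Boolean so that $\mu(X,Y)=(-1)^{|Y-X|}$, identifies $\mu^*(f)(\emptyset,X)$, $\mu^*(g)(X,\hat{E})$ and $\mu^*(fg)(\emptyset,\hat{E})$ with signed copies of $T(\mathcal{C}|X;0,s)$, $T(\mathcal{C}/X;t,0)$ and $T(\mathcal{C};t,s)$, and then invokes Wang's theorem (\autoref{Mobius-Conjugation}) that the M\"obius conjugation is a ring homomorphism. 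You instead expand the right-hand side directly, reindex via $W=X\sqcup Z$ (legitimate precisely because $\mathcal{C}$ is a simplicial complex, so $W\in\mathcal{C}$ forces $X,Y\in\mathcal{C}$), and collapse the triple sum with $\sum_{Y\subseteq X\subseteq W}(-1)^{|X|}=(-1)^{|Y|}[W=Y]$; your sign bookkeeping, including the identities $r(\mathcal{C}|X)=r_{\mathcal{C}}(X)$ and $r(\mathcal{C}/X)=r(\mathcal{C})-r_{\mathcal{C}}(X)$ that the paper also relies on, checks out. The two arguments are close cousins: your Boolean cancellation is exactly the relation $\mu*\zeta=\delta$ on Boolean intervals that powers the homomorphism property of $\mu^*$. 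What your version buys is self-containedness --- no appeal to \autoref{Mobius-Conjugation} and no conventions $r_{\mathcal{C}}(\hat{E})=\infty$, $t^{-\infty}=0$ --- at the cost of the conceptual uniformity the paper gains by running the same M\"obius-conjugation template for both \autoref{Semi-CCF} and \autoref{Semi-TCF}. One small correction to your closing remark: the paper does use a M\"obius-conjugation argument here, but on the poset of central sets with a top element adjoined rather than on the semilattice of flats, which sidesteps the objection you raise.
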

\begin{proof}
First note that for any element $X\in\mathcal{C}$, 
\[
T(\mathcal{C}|X;t,s)=\sum_{Y\subseteq X}(t-1)^{r_\mathcal{C}(X)-r_{\mathcal{C}}(Y)}(s-1)^{|Y|-r_{\mathcal{C}}(Y)}
\]
by \eqref{Deletion1} and \eqref{Deletion-Rank-Semi}, and
\[
T(\mathcal{C}/X;t,s)=\sum_{X\subseteq Y, \,Y\in\mathcal{C}}(t-1)^{r(\mathcal{C})-r_{\mathcal{C}}(Y)}(s-1)^{|Y-X|-r_{\mathcal{C}}(Y)+r_{\mathcal{C}}(X)}
\]
via \eqref{Contraction1} and \eqref{Contraction-Rank-Semi}.

To write the Tutte polynomial as the M\"obius conjugation, we now consider the poset $(\hat{\mathcal{C}},\subseteq)$ and define $f,g:\hat{\mathcal{C}}\to\mathbb{Z}[t,s]$ such that
\[
f(X)=(1-s)^{|X|-r_{\mathcal{C}}(X)}\quad\And\quad g(X)=(1-t)^{r(\mathcal{C})-r_{\mathcal{C}}(X)},\quad\forall\;X\in\hat{\mathcal{C}}.
\]
Since $\mathcal{C}$ is a simplicial complex, for any $X\subseteq Y$ in $\mathcal{C}$, the interval $[X,Y]$ is contained in $\mathcal{C}$ and is isomorphic to the Boolean lattice $[\emptyset,Y-X]$.  Consequently, the M\"obius function of $\hat{\mathcal{C}}$ satisfies $\mu(X,Y)=(-1)^{|Y-X|}$ for any $X\subseteq Y$ in $\mathcal{C}$. We adopt the same procedure as in the proof of  \eqref{Key-Equation}. Together with the assumptions that $r_{\mathcal{C}}(\hat{E})=\infty$ and $t^{-\infty}=0$, we can derive
\[
\mu^*(f)(\emptyset,X)=(-1)^{r(\mathcal{C}|X)}T(\mathcal{C}|X;0,s),
\]
\[
\mu^*(g)(X,\hat{E})=(-1)^{r(\mathcal{C}/X)}T(\mathcal{C}/X;t,0),
\]
\[
\mu^*(fg)(\emptyset,\hat{E})=(-1)^{r(\mathcal{C})}T(\mathcal{C};t,s).
\]
Combining $r(\mathcal{C})=r(\mathcal{C}|X)+r(\mathcal{C}/X)$ for every $X\in\mathcal{C}$ and applying \autoref{Mobius-Conjugation}, we deduce
\begin{align*}
T(\mathcal{C};t,s)&=(-1)^{r(\mathcal{C})}\mu^*(fg)(\emptyset,\hat{E})\\
&=(-1)^{r(\mathcal{C})}\sum_{X\in\hat{\mathcal{C}}}\mu^*(f)(\emptyset,X)\mu^*(g)(X,\hat{E})\\
&=\sum_{X\in\mathcal{C}}T(\mathcal{C}|X;0,s)T(\mathcal{C}/X;t,0).
\end{align*}
This completes the proof.
\end{proof}
The above convolution formula for the Tutte polynomial of semimatroids, using the following lemma, can be further simplified to the form presented in the subsequent corollary.
\begin{lemma}\label{Semi-Contraction-Loop}
Let $(E,\mathcal{C},r_{\mathcal{C}})$ be a semimatroid and $X\in\mathcal{C}$. The semimatroid $\mathcal{C}/X$ has no loops if and only if $X\in\mathscr{L}(\mathcal{C})$.
\end{lemma}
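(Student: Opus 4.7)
The plan is to unwind the definitions of contraction, loop, and closure, and observe that the loops of $\mathcal{C}/X$ are exactly the elements of ${\rm cl}_\mathcal{C}(X) - X$. From there the equivalence falls out immediately.

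First I would characterize when an element $e \in E - X$ is a loop of $\mathcal{C}/X$. By the definition of contraction in \eqref{Contraction1}, the singleton $\{e\}$ belongs to $\mathcal{C}/X$ precisely when $X \sqcup e \in \mathcal{C}$. Under this membership, \eqref{Contraction-Rank-Semi} gives $r_{\mathcal{C}/X}(\{e\}) = r_\mathcal{C}(X \sqcup e) - r_\mathcal{C}(X)$. Thus $e$ is a loop of $\mathcal{C}/X$ if and only if both $X \cup e \in \mathcal{C}$ and $r_\mathcal{C}(X \cup e) = r_\mathcal{C}(X)$ hold, which by the defining condition of ${\rm cl}_\mathcal{C}(X)$ is exactly the statement $e \in {\rm cl}_\mathcal{C}(X)$.

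Next I would assemble the biconditional. The above shows that $\mathcal{C}/X$ has no loops if and only if ${\rm cl}_\mathcal{C}(X) \cap (E - X) = \emptyset$, equivalently ${\rm cl}_\mathcal{C}(X) \subseteq X$. Since every $x \in X$ automatically satisfies $X \cup x = X \in \mathcal{C}$ and $r_\mathcal{C}(X \cup x) = r_\mathcal{C}(X)$, we always have $X \subseteq {\rm cl}_\mathcal{C}(X)$. Combining the two inclusions yields ${\rm cl}_\mathcal{C}(X) = X$, i.e., $X \in \mathscr{L}(\mathcal{C})$. Conversely, if $X \in \mathscr{L}(\mathcal{C})$ then ${\rm cl}_\mathcal{C}(X) = X$, so by the characterization above $\mathcal{C}/X$ has no loops.

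There is no genuine obstacle here; the only subtlety worth spelling out is the reminder that $X \subseteq {\rm cl}_\mathcal{C}(X)$ as a consequence of the stated definition of the closure, so ${\rm cl}_\mathcal{C}(X) \subseteq X$ actually upgrades to equality. The argument is essentially a dictionary check between the contraction rank formula \eqref{Contraction-Rank-Semi} and the closure operator, and does not require any of the semimatroid axioms (SR1)--(SR5) beyond what is already built into those definitions.
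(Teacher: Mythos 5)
Your proof is correct and follows essentially the same route as the paper: both arguments reduce to the observation that $e\in E-X$ is a loop of $\mathcal{C}/X$ exactly when $X\sqcup e\in\mathcal{C}$ and $r_{\mathcal{C}}(X\sqcup e)=r_{\mathcal{C}}(X)$, i.e.\ when $e\in{\rm cl}_{\mathcal{C}}(X)-X$. The only cosmetic difference is that you state the biconditional directly while the paper runs each direction as a proof by contradiction; your remark that $X\subseteq{\rm cl}_{\mathcal{C}}(X)$ upgrades the inclusion to equality is a reasonable detail to make explicit.
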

\begin{proof}
Assume $X\in\mathscr{L}(\mathcal{C})$. Arguing by contradiction, suppose $\mathcal{C}/X$ has a loop $e$. Then $e\notin X$ and $X\sqcup e\in \mathcal{C}$. Furthermore, we have
\begin{equation}\label{Rank-Equi}
0=r_{\mathcal{C}/X}(e)=r_\mathcal{C}(X\sqcup e)-r_\mathcal{C}(X).
\end{equation}
This implies $e\in {\rm cl}_\mathcal{C}(X)=X$, a contradiction. Therefore, $\mathcal{C}/X$ does not contain loops.

Conversely, suppose $X\notin\mathscr{L}(\mathcal{C})$. Then ${\rm cl}_\mathcal{C}(X)-X$ is nonempty. Taking an element $e\in {\rm cl}_\mathcal{C}(X)-X$, the equation \eqref{Rank-Equi} holds in this case. This means that $e$ is a loop of $\mathcal{C}/X$, a contraction. Hence, $X\in\mathscr{L}(\mathcal{C})$, which finishes the proof.
\end{proof}

\begin{corollary}\label{Semi-TCF1}
Let $(E,\mathcal{C},r_{\mathcal{C}})$ be a semimatroid. Then 
\[
T(\mathcal{C};t,s)=\sum_{X\in\mathscr{L}(\mathcal{C})}T(\mathcal{C}|X;0,s)T(\mathcal{C}/X;t,0).
\]
Moreover, $T(\mathcal{C};t,s)$ can be expressed as
\[
T(\mathcal{C};t,s)=\sum_{X\text{ is a cyclic flat of $\mathcal{C}$}}T(\mathcal{C}|X;0,s)T(\mathcal{C}/X;t,0),
\]
where a flat $X$ of $\mathcal{C}$ is a cyclic flat if it does not have bridges.
\end{corollary}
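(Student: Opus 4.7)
The plan is to deduce both identities from \autoref{Semi-TCF} by showing that many summands in $\sum_{X\in\mathcal{C}}T(\mathcal{C}|X;0,s)T(\mathcal{C}/X;t,0)$ vanish. The two reductions are handled independently: one shows that the factor $T(\mathcal{C}/X;t,0)$ kills all non-flat contributions, and the other shows that $T(\mathcal{C}|X;0,s)$ kills the non-cyclic flats.

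First I would establish the identity over $\mathscr{L}(\mathcal{C})$. By \autoref{Semi-Contraction-Loop}, for $X\in\mathcal{C}$ the semimatroid $\mathcal{C}/X$ has a loop if and only if $X\notin\mathscr{L}(\mathcal{C})$. Picking any loop $e$ of $\mathcal{C}/X$ and applying the deletion-contraction formula for the Tutte polynomial (the loop case gives $T(\mathcal{N};t,s)=sT(\mathcal{N}/e;t,s)$), one immediately gets $T(\mathcal{C}/X;t,0)=0$. Hence every $X\in\mathcal{C}\setminus\mathscr{L}(\mathcal{C})$ contributes $0$ in \autoref{Semi-TCF}, and the sum collapses to
\[
T(\mathcal{C};t,s)=\sum_{X\in\mathscr{L}(\mathcal{C})}T(\mathcal{C}|X;0,s)T(\mathcal{C}/X;t,0).
\]

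Next I would restrict further to cyclic flats. Suppose $X\in\mathscr{L}(\mathcal{C})$ admits a bridge $e$ of the restriction $\mathcal{C}|X$. Then the bridge case of the deletion-contraction formula yields
\[
T(\mathcal{C}|X;t,s)=t\,T((\mathcal{C}|X)\backslash e;t,s),
\]
so $T(\mathcal{C}|X;0,s)=0$. Therefore only flats $X$ without bridges in $\mathcal{C}|X$, i.e.\ cyclic flats, contribute to the sum, giving the second displayed identity.

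There is no serious obstacle: once \autoref{Semi-TCF} is at hand, both reductions are one-line applications of the deletion-contraction recurrence together with \autoref{Semi-Contraction-Loop}. The mildly delicate point to be careful about is simply matching conventions (that $T(\mathcal{N};t,0)$ and $T(\mathcal{N};0,s)$ make sense as polynomial evaluations and that the deletion-contraction may be applied to $\mathcal{C}/X$ and $\mathcal{C}|X$, which are themselves semimatroids by the definitions \eqref{Deletion1}--\eqref{Contraction-Rank-Semi}), but no new ideas are needed.
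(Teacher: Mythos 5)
Your proposal is correct and follows essentially the same route as the paper: both reductions come from applying the deletion--contraction recurrence to see that $T(\cdot;t,0)=0$ in the presence of a loop and $T(\cdot;0,s)=0$ in the presence of a bridge, combined with \autoref{Semi-Contraction-Loop} to identify the non-flat central sets as exactly those whose contraction has a loop. No gaps; your extra care about conventions is reasonable but the paper treats these points the same way.
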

\begin{proof}
From the deletion-contraction formula for $T(\mathcal{C};t,s)$, we directly derive that $T(\mathcal{C};t,0)=0$ if  $\mathcal{C}$ has a loop; and $T(\mathcal{C};0,s)=0$ if  $\mathcal{C}$ has a bridge. Combining \autoref{Semi-TCF} and \autoref{Semi-Contraction-Loop}, we obtain the first formula in \autoref{Semi-TCF1}. From the definition of cyclic flats, we further deduce the second formula in \autoref{Semi-TCF1}. 
\end{proof}
\section{Hyperplane arrangements}\label{Sec5}
This section primarily focuses on two aspects. One is to present a convolution formula for the Tutte polynomial of hyperplane arrangements. The other is to explore the classification problem of parallel translations of an arbitrary linear arrangement from the perspective of assigning matroids and semimatroids.
\subsection{Convolution formulae}\label{Sec5-1}
We begin with general hyperplane arrangements, and then specialize to those of parallel translations of a hyperplane arrangement. For more general setting and background on hyperplane arrangement can be found in the literature \cite{Stanley2007}. 

Let $\mathbb{F}$ be a field. A {\em hyperplane arrangement} $\mathcal{A}$ in the vector space $\mathbb{F}^n$ is a collection of (affine) hyperplanes in $\mathbb{F}^n$, indexed as follows
\[
H_e: \bm\alpha_e\cdot \bm x=a_e,\quad e\in E.
\]
The {\em complement} of $\mathcal{A}$ is defined to be $M(\mathcal{A}):=\mathbb{F}^n-\bigcup_{H_e\in \mathcal{A}}H_e$. 
For every subset $X\subseteq E$, we denote by $\mathcal{A}_X$ the collection of hyperplanes in $\mathcal{A}$ indexed by $X$, and by $\cap\mathcal{A}_X$ the intersection $\bigcap_{e\in X}H_e$ if it is nonempty. Let $\mathcal{C}(\mathcal{A},E)$ be the family of subsets $X\subseteq E$ such that the intersection $\cap\mathcal{A}_X$ is nonempty, i.e.,
\[
\mathcal{C}(\mathcal{A},E):=\{X\subseteq E\mid\cap\mathcal{A}_X\ne\emptyset\}.
\]
Every element in $\mathcal{C}(\mathcal{A},E)$ is referred to as a {\em central subset} of $E$. Thus, every member $X$ in $\mathcal{C}(\mathcal{A},E)$ precisely gives rise to a {\em central subarrangement} $\mathcal{A}_X$ of $\mathcal{A}$, and vice versa. The {\em rank function} $r_{\mathcal{A}}$ for central subarrangements $\mathcal{A}_X$ of $\mathcal{A}$ associated with the central subsets $X$, is defined as
\[
r_{\mathcal{A}}(\mathcal{A}_X):=n-\dim(\cap\mathcal{A}_X)={\rm rank}\big\{(\bm\alpha_e,a_e)\mid e\in X\big\}.
\]
Especially, the {\em rank} $r(\mathcal{A})$ of $\mathcal{A}$ equals $n-\dim\big({\rm span}\{\bm\alpha_e:e\in E\}\big)$. Moreover, we call the poset
\[
\mathscr{L}(\mathcal{A}):=\big\{\cap\mathcal{A}_X\mid X\in\mathcal{C}(\mathcal{A},E)\big\}
\]
the {\em intersection semilattice} of $\mathcal{A}$, whose partial order is the reverse of set inclusion and the unique minimal element is the whole space $\bigcap_{H\in\emptyset}H:=\mathbb{F}^n$. Members of $\mathscr{L}(\mathcal{A})$ are known as {\em flats}. The {\em characteristic polynomial} $\chi(\mathcal{A};t)$ of $\mathcal{A}$ is defined as
\begin{equation*}\label{Characteristic-Polynomial}
\chi(\mathcal{A};t):=\sum_{X\in\mathcal{C}(\mathcal{A},E)}(-1)^{|X|}t^{{\rm dim}(\cap\mathcal{A}_X)}=\sum\limits_{X\in \mathscr{L}(\mathcal{A})}\mu(X)t^{{\rm dim}(X)}.
\end{equation*}
In particular, we define $\chi(\mathcal{A};t)=1$ when $\mathcal{A}$ is the empty arrangement.

Let $X\in \mathscr{L}(\mathcal{A})$. The {\em localization} $\mathcal{A}^{X}$ of $\mathcal{A}$ at $X$ is given by
\[
\mathcal{A}^{X}:=\{H\in\mathcal{A}\mid X\subseteq H\}.
\]
To better study convolution formulae for the Tutte polynomial, we adopt the following way to define the restriction $\mathcal{A}|X$, which differs from its usual definition ``$\mathcal{A}|X=\{H\cap X\ne\emptyset\mid  H\in \mathcal{A}-\mathcal{A}^X\}$". The {\em restriction} $\mathcal{A}|X$ of $\mathcal{A}$ on $X$ is a hyperplane arrangement in $X$ defined by
\[
\mathcal{A}|X:=\{H\cap X\ne\emptyset\mid  H\in \mathcal{A}\}.
\]
We need to be especially careful that $\mathcal{A}|X$ possibly contain $X$. In this case, $X$ is called a {\em degenerate hyperplane}; and $\mathcal{A}|X$ is said to be {\em degenerate}. Consequently, its characteristic polynomial is defined by $\chi(\mathcal{A}|X;t):=0$.

Furthermore, we define the {\em rank function} $r_{\mathcal{C}(\mathcal{A},E)}$ of $\mathcal{C}(\mathcal{A},E)$ by
\[
r_{\mathcal{C}(\mathcal{A},E)}(X):=r_{\mathcal{A}}(\mathcal{A}_X),\quad\forall\, X\in\mathcal{C}(\mathcal{A},E).
\]
As noted in \cite[Proposition 2.2]{Ardila2007}, the triple of a hyperplane arrangement, the set of its central subarrangements and rank function forms a semimatroid, i.e., the triple $\big(E,\mathcal{C}(\mathcal{A},E),r_{\mathcal{C}(\mathcal{A},E)}\big)$ is a semimatroid. Notice that the geometric semilattice $\mathscr{L}\big(\mathcal{C}(\mathcal{A},E)\big)$ of $\mathcal{C}(\mathcal{A},E)$ consists of those central subsets $X$ in $\mathcal{C}(\mathcal{A},E)$ such that no other central subsets with rank $r_{\mathcal{C}(\mathcal{A},E)}(X)$ contain $X$. Immediately, the intersection semilattice $\mathscr{L}(\mathcal{A})$ can be more straightforwardly expressed in the form
\[
\mathscr{L}(\mathcal{A})=\big\{\cap\mathcal{A}_X\mid X\in\mathscr{L}\big(\mathcal{C}(\mathcal{A},E)\big)\big\}.
\]
Naturally, $\mathscr{L}(\mathcal{A})$ is isomorphic to $\mathscr{L}\big(\mathcal{C}(\mathcal{A},E)\big)$. It follows that the characteristic polynomial of $\mathcal{A}$ and the corresponding semimatroid $\mathcal{C}(\mathcal{A},E)$ are closely related by the equation:
\begin{equation}\label{Semimatroid-Arrangement}
\chi(\mathcal{A};t)=t^{n-r(\mathcal{A})}\chi\big(\mathcal{C}(\mathcal{A},E);t\big).
\end{equation}
Based on this relation, when \autoref{Semi-CCF} is restricted to the specific semimatroid arising from an arbitrary hyperplane arrangement, we directly obtain a convolution formula for the multiplicative characteristic polynomial of hyperplane arrangements, which was initially discovered by Wang \cite[Theorem 2.1]{Wang2015}. 
\begin{proposition}[\cite{Wang2015}, Theorem 2.1]
Let $\mathcal{A}$ be a hyperplane arrangement in $\mathbb{F}^n$. Then
\[
\chi(\mathcal{A};ts)=\sum_{X\in \mathscr{L}(\mathcal{A})}\chi(\mathcal{A}^X;t)\chi(\mathcal{A}|X;s).
\]
\end{proposition}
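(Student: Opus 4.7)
The plan is to deduce this convolution formula by applying \autoref{Semi-CCF} to the semimatroid $\mathcal{C}:=\mathcal{C}(\mathcal{A},E)$ and translating back to arrangements via the bridge \eqref{Semimatroid-Arrangement}. First I would verify that $\mathcal{C}$ is loopless: a loop would correspond to an $e\in E$ with $r_\mathcal{C}(\{e\})=0$, which would force $H_e=\mathbb{F}^n$, contradicting $H_e$ being a hyperplane. Therefore \autoref{Semi-CCF} applies and produces
$$
\chi(\mathcal{C};ts)=\sum_{Y\in\mathscr{L}(\mathcal{C})} t^{r(\mathcal{C})-r_\mathcal{C}(Y)}\chi(\mathcal{C}|Y;t)\,\chi(\mathcal{C}/Y;s).
$$

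Next I would identify each piece on the right with an arrangement-theoretic object through the order isomorphism $\mathscr{L}(\mathcal{C})\cong\mathscr{L}(\mathcal{A})$ sending $Y\mapsto X:=\cap\mathcal{A}_Y$. For the restriction, since $Y$ is a central flat, every subset of $Y$ has intersection containing $X$ and is thus central, so $\mathcal{C}|Y$ is exactly the matroid of the localization $\mathcal{A}^X=\mathcal{A}_Y$ in $\mathbb{F}^n$, which has rank $n-\dim X$; by \eqref{Semimatroid-Arrangement} this gives $\chi(\mathcal{A}^X;t)=t^{\dim X}\chi(\mathcal{C}|Y;t)$. For the contraction, a subset $Z\subseteq E-Y$ lies in $\mathcal{C}/Y$ iff $Z\sqcup Y\in\mathcal{C}$ iff $X\cap\bigcap_{e\in Z}H_e\ne\emptyset$, which matches the central subsets of the restricted arrangement $\mathcal{A}|X$ in the ambient space $X$ of dimension $\dim X$, with rank functions agreeing via $r_{\mathcal{C}/Y}(Z)=r_\mathcal{C}(Z\sqcup Y)-r_\mathcal{C}(Y)$; applying \eqref{Semimatroid-Arrangement} once more yields $\chi(\mathcal{A}|X;s)=s^{\dim X - r(\mathcal{C}/Y)}\chi(\mathcal{C}/Y;s)$.

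Finally I would multiply both sides of the identity above by $(ts)^{n-r(\mathcal{A})}$. The left-hand side becomes $\chi(\mathcal{A};ts)$ by \eqref{Semimatroid-Arrangement} (with $ts$ in place of $t$, using $r(\mathcal{C})=r(\mathcal{A})$). In each summand on the right, the powers of $t$ collapse as $t^{n-r(\mathcal{A})}\cdot t^{r(\mathcal{C})-r_\mathcal{C}(Y)}=t^{n-r_\mathcal{C}(Y)}=t^{\dim X}$, which pairs with $\chi(\mathcal{C}|Y;t)$ to give $\chi(\mathcal{A}^X;t)$; the factor $s^{n-r(\mathcal{A})}$ combines with $\chi(\mathcal{C}/Y;s)$ to give $\chi(\mathcal{A}|X;s)$, using $n-r(\mathcal{A})=\dim X-r(\mathcal{C}/Y)$ which follows from $r(\mathcal{C}/Y)=r(\mathcal{C})-r_\mathcal{C}(Y)=r(\mathcal{A})-(n-\dim X)$. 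Reindexing the sum by $X$ via the flat bijection delivers the stated formula.

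The main obstacle is the careful matching between the contraction semimatroid $\mathcal{C}/Y$ and the restricted arrangement $\mathcal{A}|X$ interpreted in the correct ambient space, together with the correct ranks; once this identification is in hand the argument reduces to algebraic bookkeeping of powers of $t$ and $s$ alongside an invocation of \autoref{Semi-CCF}.
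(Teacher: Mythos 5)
Your argument is correct and takes exactly the route the paper intends: the paper presents this proposition as a direct specialization of \autoref{Semi-CCF} to the semimatroid $\mathcal{C}(\mathcal{A},E)$ via the relation \eqref{Semimatroid-Arrangement}, and your identification of $\mathcal{C}|Y$ with $\mathcal{A}^X$, of $\mathcal{C}/Y$ with $\mathcal{A}|X$, and the bookkeeping of the powers of $t$ and $s$ simply supply the details the paper leaves implicit. Note only that you are (rightly) reading $\mathcal{A}|X$ as the usual restriction by hyperplanes not containing $X$ --- consistent with $\mathcal{C}/Y$ having ground set $E-Y$ --- rather than the paper's modified definition from \autoref{Sec5-1}, under which every proper flat would contribute a degenerate factor.
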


As a generalization of the characteristic polynomial $\chi(\mathcal{A};t)$, the {\em Tutte polynomial} $T(\mathcal{A};t,s)$ of $\mathcal{A}$ is defined to be
\[
T(\mathcal{A};t,s):=\sum_{X\in\mathcal{C}(\mathcal{A},E)}(t-1)^{r(\mathcal{A})
-r_{\mathcal{A}}(\mathcal{A}_X)}(s-1)^{|X|-r_{\mathcal{A}}(\mathcal{A}_X)}.
\]
In particular, we define $T(\mathcal{A};t,s)=1$ when $\mathcal{A}$ is the empty arrangement. Immediately, the specialization to hyperplane arrangements of  \autoref{Semi-TCF} is \autoref{HCF}.
\begin{corollary}\label{HCF}
Let $\mathcal{A}$ be a hyperplane arrangement in $\mathbb{F}^n$. Then
\begin{align*}
T(\mathcal{A};t,s)&=\sum_{X\in\mathcal{C}(\mathcal{A},E)}T(\mathcal{A}_X;0,s)T(\mathcal{A}|\cap\mathcal{A}_X;t,0)\\
&=\sum_{X\in\mathscr{L}(\mathcal{C}(\mathcal{A},E))}T(\mathcal{A}_X;0,s)T(\mathcal{A}|\cap\mathcal{A}_X;t,0).
\end{align*}
\end{corollary}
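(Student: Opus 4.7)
The plan is to derive \autoref{HCF} as a direct specialization of \autoref{Semi-TCF} and \autoref{Semi-TCF1} to the semimatroid $\bigl(E,\mathcal{C}(\mathcal{A},E),r_{\mathcal{C}(\mathcal{A},E)}\bigr)$ canonically attached to $\mathcal{A}$. The bulk of the work is simply to match each ingredient of the semimatroid identities with its arrangement-theoretic counterpart.

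First, comparing the two definitions term by term, $r_{\mathcal{C}(\mathcal{A},E)}(X)=r_{\mathcal{A}}(\mathcal{A}_X)$ and the rank of the semimatroid equals $r(\mathcal{A})$, so
\[
T\bigl(\mathcal{C}(\mathcal{A},E);t,s\bigr)=T(\mathcal{A};t,s).
\]
Next I would identify, for each $X\in\mathcal{C}(\mathcal{A},E)$, the restriction and contraction of this semimatroid. By the definition \eqref{Deletion1} of the restriction, $\mathcal{C}(\mathcal{A},E)|X$ records exactly those $Y\subseteq X$ with $\cap\mathcal{A}_Y\ne\emptyset$, which is the semimatroid $\mathcal{C}(\mathcal{A}_X,X)$ attached to the central subarrangement $\mathcal{A}_X$; hence $T\bigl(\mathcal{C}(\mathcal{A},E)|X;0,s\bigr)=T(\mathcal{A}_X;0,s)$. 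For the contraction, by \eqref{Contraction1} a subset $Y\subseteq E-X$ lies in $\mathcal{C}(\mathcal{A},E)/X$ iff $(\cap\mathcal{A}_X)\cap(\cap\mathcal{A}_Y)\ne\emptyset$, i.e., iff the restricted hyperplanes $\{H_e\cap(\cap\mathcal{A}_X)\mid e\in Y\}$ have nonempty common intersection in $\cap\mathcal{A}_X$; the rank formula \eqref{Contraction-Rank-Semi} then yields exactly the codimension in $\cap\mathcal{A}_X$. Thus $\mathcal{C}(\mathcal{A},E)/X$ is the semimatroid of the restricted arrangement $\mathcal{A}|\cap\mathcal{A}_X$, and so
\[
T\bigl(\mathcal{C}(\mathcal{A},E)/X;t,0\bigr)=T(\mathcal{A}|\cap\mathcal{A}_X;t,0).
\]
Plugging these three identifications into \autoref{Semi-TCF} yields the first formula of \autoref{HCF}, and plugging them into \autoref{Semi-TCF1} (together with the fact that $\mathscr{L}(\mathcal{C}(\mathcal{A},E))$ is the geometric semilattice of flats of this semimatroid, mirroring $\mathscr{L}(\mathcal{A})$) yields the second.

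The one point that needs care, and is the mild obstacle in the translation, is the behaviour of degenerate hyperplanes in $\mathcal{A}|\cap\mathcal{A}_X$. A degenerate hyperplane arises precisely when some $H_e$ with $e\in E-X$ contains $\cap\mathcal{A}_X$, which by \autoref{Semi-Contraction-Loop} is exactly the condition that $\mathcal{C}(\mathcal{A},E)/X$ has a loop, equivalently that $X\notin\mathscr{L}(\mathcal{C}(\mathcal{A},E))$. In that situation both $T\bigl(\mathcal{C}(\mathcal{A},E)/X;t,0\bigr)$ and $T(\mathcal{A}|\cap\mathcal{A}_X;t,0)$ vanish, so the identification of contractions above remains consistent and the non-flat contributions drop out of the sum, which is precisely the passage from the first to the second formula of \autoref{HCF}.
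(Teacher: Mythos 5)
Your proposal is correct and follows exactly the route the paper takes: the paper presents \autoref{HCF} as an immediate specialization of \autoref{Semi-TCF} and \autoref{Semi-TCF1} to the semimatroid $\big(E,\mathcal{C}(\mathcal{A},E),r_{\mathcal{C}(\mathcal{A},E)}\big)$, and you have simply written out the dictionary (restriction $\leftrightarrow$ central subarrangement $\mathcal{A}_X$, contraction $\leftrightarrow$ restricted arrangement $\mathcal{A}|\cap\mathcal{A}_X$) that the paper leaves implicit. Your observation that degenerate hyperplanes of $\mathcal{A}|\cap\mathcal{A}_X$ correspond via \autoref{Semi-Contraction-Loop} to loops of $\mathcal{C}(\mathcal{A},E)/X$, so that both Tutte evaluations at $s=0$ vanish together and the sum collapses to $\mathscr{L}\big(\mathcal{C}(\mathcal{A},E)\big)$, is precisely the point that makes the specialization go through.
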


\subsection{Parallel translations}\label{Sec5-2}
In this subsection, we shall revisit the classification problem of parallel translations of hyperplane arrangements from the perspective of semimatroids and assigning matroids. From now on, we shall work with a fixed linear arrangement $\mathcal{A}_{\bm o}:=\{H_e:\bm\alpha_e\cdot\bm x=0\mid e\in E\}$ in $\mathbb{F}^n$. For every vector $\bm a=(a_e)_{e\in E}\in\mathbb{F}^{|E|}$, it naturally leads to a hyperplane arrangement 
\[
\mathcal{A}_{\bm a}:=\{H_{e,a_e}:\bm \alpha_e\cdot\bm x=a_e\mid e\in E\},
\]
in $\mathbb{F}^n$, which is referred to as a {\em parallel translation} of $\mathcal{A}_{\bm o}$. Conversely, we also refer to the linear arrangement $\mathcal{A}_{\bm o}$ as the {\em centralization} of an arbitrary hyperplane arrangement $\mathcal{A}_{\bm a}$ in $\mathbb{F}^n$.

It is well known that the linear correlation among normal vectors of an arbitrary central arrangement naturally determines a matroid. Thus, the semimatroid $\big(E,\mathcal{C}(\mathcal{A}_{\bm o},E),r_{\mathcal{C}(\mathcal{A}_{\bm o},E)}\big)$ is precisely a matroid given by the linear arrangement $\mathcal{A}_{\bm o}$, denoted by $M_{\mathcal{A}_{\bm o}}$. 

From a geometric perspective, a subset $C\subseteq E$ is a circuit of $M_{\mathcal{A}_{\bm o}}$ if and only if  the intersection  $\cap\mathcal{A}_{\bm o,C}$ is nonempty, and removing any element $e\in C$ results in $\cap\mathcal{A}_{\bm o,C-e}=\cap\mathcal{A}_{\bm o,C}$. Hence, we sometimes refer to a circuit of $M_{\mathcal{A}_{\bm o}}$ as a {\em circuit} of $\mathcal{A}_{\bm o}$ as well. Naturally, the concept of a circuit can be extended to general hyperplane arrangements that may not be central. An {\em affine circuit} of $\mathcal{A}_{\bm a}$ is a subset $C\subseteq E$ such that $\cap\mathcal{A}_{\bm a,C}\ne\emptyset$ and $\cap\mathcal{A}_{\bm a,C-e}=\cap\mathcal{A}_{\bm a,C}$ for all $e\in C$. We denote the set of all affine circuits of $\mathcal{A}_{\bm a}$ by $\mathscr{C}(\mathcal{A}_{\bm a})$. In fact, every affine circuit of $\mathcal{A}_{\bm a}$ agrees with a circuit of the corresponding semimatroid $\big(E,\mathcal{C}(\mathcal{A}_{\bm a},E),r_{\mathcal{C}(\mathcal{A}_{\bm a},E)}\big)$.

From the linear algebra viewpoint, a subset $C\subseteq E$ is an affine circuit of $\mathcal{A}_{\bm a}$ if and only if the vectors $(\bm\alpha_e,a_e)$ indexed by $C$ form a minimal linearly dependent set. Immediately, when restricted to the linear arrangement $\mathcal{A}_{\bm o}$, there exists a unique vector $\bm c_C=(c_e)_{e\in E}\in\mathbb{F}^{|E|}$ (up to a nonzero scalar multiple) associated with a circuit $C$ of $\mathcal{A}_{\bm o}$ such that
\begin{equation}\label{Circuit-Vector}
\sum_{e\in E}c_e\bm\alpha_e=\bm0\quad\And\quad c_e\ne 0\text{ if and only if } e\in C.
\end{equation}
This vector  $\bm c_C$ is referred to as a {\em circuit vector} of $C$. Then these hyperplanes with the circuit vectors as normal vectors form a linear arrangement in $\mathbb{F}^{|E|}$, called  the {\em discriminantal arrangement} or {\em derived arrangement} of $\mathcal{A}_{\bm o}$ and denoted by $\delta\mathcal{A}_{\bm o}$. More specifically, the discriminantal arrangement $\delta\mathcal{A}_{\bm o}$ is defined by
\begin{equation}\label{Derived-Arrangement}
\delta\mathcal{A}_{\bm o}:=\big\{H_C:\bm c_C\cdot \bm x=0\mid C\in\mathscr{C}(\mathcal{A}_{\bm o})\big\}.
\end{equation}

In 1989, Manin and Schechtman \cite{MS1989} investigated parallel translations of a linear arrangement in general position by introduced  the discriminantal arrangement (also called Manin--Schechtman arrangement). Subsequently, Falk \cite{Falk1994} demonstrated that neither combinatorial nor topological structure of the discriminantal arrangement is independent of the original arrangement. In 1997, Bayer and Brandt \cite{BB1997} studied the discriminantal arrangements without general position assumptions on the original arrangement. Recently, Oxley and Wang \cite{OW2019} observed that the vector matroid associated to the discriminantal arrangement  is nothing but the {\em derived matroid}, which was first proposed by Rota at the Bowdoin College Summer 1971 NSF Conference on Combinatorics to investigate ``dependencies among dependencies" of matroids. Not long after, Chen, Fu and Wang \cite{CFW2021} used the discriminantal arrangement $\delta\mathcal{A}_{\bm o}$ of  $\mathcal{A}_{\bm o}$ to classify intersection semilattices of all parallel translations $\mathcal{A}_{\bm a}$. For more related work, please see \cite{FW2023}.

\begin{proposition}[\cite{CFW2021}, Theorem 1.1]\label{Classification1}
Let $\bm a,\bm a'\in\mathbb{F}^{|E|}$. If there exists a flat $X\in\mathscr{L}(\delta\mathcal{A}_{\bm o})$ such that $\bm a,\bm a'\in M(\delta\mathcal{A}_{\bm o}/X)$, then $\mathscr{L}(\mathcal{A}_{\bm a})\cong\mathscr{L}(\mathcal{A}_{\bm a}')$.
\end{proposition}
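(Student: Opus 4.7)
The plan is to show that the hypothesis forces the two semimatroids $\mathcal{C}(\mathcal{A}_{\bm a},E)$ and $\mathcal{C}(\mathcal{A}_{\bm a'},E)$ to be literally equal, at which point the desired conclusion is free: by the isomorphism $\mathscr{L}(\mathcal{A}_{\bm b})\cong\mathscr{L}\bigl(\mathcal{C}(\mathcal{A}_{\bm b},E)\bigr)$ recalled earlier in the excerpt, equality of the semimatroids implies equality (a fortiori isomorphism) of their intersection semilattices. Since every such semimatroid shares the same ground set $E$ and inherits its rank function from the common matroid $M_{\mathcal{A}_{\bm o}}$, the entire problem reduces to proving that $\mathcal{C}(\mathcal{A}_{\bm a},E)=\mathcal{C}(\mathcal{A}_{\bm a'},E)$ as collections of subsets of $E$.

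The key linear-algebraic ingredient is a ``circuit-vector translation'' of affine circuits. Given $\bm a\in\mathbb{F}^{|E|}$ and a circuit $C\in\mathscr{C}(\mathcal{A}_{\bm o})$ with circuit vector $\bm c_C$ satisfying \eqref{Circuit-Vector}, consistency of the system $\{\bm\alpha_e\cdot\bm x=a_e\}_{e\in C}$ is equivalent, by the Fredholm alternative, to the single scalar equation $\sum_{e\in C}c_ea_e=\bm c_C\cdot\bm a=0$, i.e., to $\bm a\in H_C$ in the discriminantal arrangement $\delta\mathcal{A}_{\bm o}$. Moreover, minimality of $C$ (so that $C$ really is an \emph{affine} circuit of $\mathcal{A}_{\bm a}$ rather than merely dependent) is automatic once $C$ is a circuit of $\mathcal{A}_{\bm o}$, because proper subsets of $C$ are independent in $M_{\mathcal{A}_{\bm o}}$. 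Consequently,
\[
\mathscr{C}(\mathcal{A}_{\bm a})=\bigl\{C\in\mathscr{C}(\mathcal{A}_{\bm o})\;\big|\;\bm a\in H_C\bigr\}.
\]
This identification turns the set of affine circuits of $\mathcal{A}_{\bm a}$ into a combinatorial function of which hyperplanes of $\delta\mathcal{A}_{\bm o}$ pass through $\bm a$.

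Now interpret the hypothesis. The condition $\bm a\in M(\delta\mathcal{A}_{\bm o}/X)$ means that $X$ is precisely the minimal flat of $\delta\mathcal{A}_{\bm o}$ containing $\bm a$: equivalently, $\bm a$ lies on every $H_C$ that contains $X$ and on no $H_C$ that does not. Thus $\{C:\bm a\in H_C\}=\{C:X\subseteq H_C\}$, a set depending only on $X$, and the same equality holds for $\bm a'$. Combining with the previous paragraph gives $\mathscr{C}(\mathcal{A}_{\bm a})=\mathscr{C}(\mathcal{A}_{\bm a'})$. It then remains to promote equality of circuits to equality of central subsets: a subset $Y\subseteq E$ lies in $\mathcal{C}(\mathcal{A}_{\bm a},E)$ iff the system $\{\bm\alpha_e\cdot\bm x=a_e\}_{e\in Y}$ is consistent, and by Fredholm again this holds iff every linear dependence among $\{\bm\alpha_e\}_{e\in Y}$—equivalently, every circuit of $\mathcal{A}_{\bm o}$ contained in $Y$—already belongs to $\mathscr{C}(\mathcal{A}_{\bm a})$. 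Since $\mathcal{A}_{\bm a}$ and $\mathcal{A}_{\bm a'}$ have identical circuit sets, the resulting simplicial complexes of central subsets coincide, finishing the proof. The main obstacle, and the place where one must be careful, is pinning down the precise convention for $\delta\mathcal{A}_{\bm o}/X$: reading ``$\bm a\in M(\delta\mathcal{A}_{\bm o}/X)$'' as ``$X$ is the minimal flat of $\delta\mathcal{A}_{\bm o}$ through $\bm a$'' is exactly what is needed to make the set $\{C:\bm a\in H_C\}$ a \emph{function of $X$ alone}; any weaker interpretation only guarantees an inclusion between the two circuit sets and breaks the symmetry required for equality.
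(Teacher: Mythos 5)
Your proposal is correct and follows essentially the same route the paper takes: it establishes \autoref{Classification1} as a consequence of the equivalences in \autoref{Classification2}, namely that $\bm a\in M(\delta\mathcal{A}_{\bm o}/X)$ pins down $\{C:\bm a\in H_C\}=\{C:X\subseteq H_C\}$ via the circuit-vector computation \eqref{Circuit-Vector}, so that $\bm a$ and $\bm a'$ in the same stratum yield the same affine circuits, hence (by the argument underlying \autoref{Semimatroid-Inverse}) the same semimatroid and isomorphic intersection semilattices. Your only departure is cosmetic: you rederive ``central subsets are determined by the compatible circuits'' directly by linear algebra instead of quoting \autoref{Semimatroid-Inverse}.
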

\autoref{Classification1} may not provide a complete characterization for the isomorphic classification of the intersection semilattices of all parallel translations $\mathcal{A}_{\bm a}$, because $\mathscr{L}(\mathcal{A}_{\bm a})$ is possibly isomorphic to $\mathscr{L}(\mathcal{A}_{\bm a}')$ even if $\bm a$ and $\bm a'$ come from different flats of $\delta\mathcal{A}_{\bm o}$. However, the method in \autoref{Classification1} is a very good way to characterize distinct semimatroids $\big(E,\mathcal{C}(\mathcal{A}_{\bm a},E),r_{\mathcal{C}(\mathcal{A}_{\bm a},E)}\big)$ arising from  parallel translations $\mathcal{A}_{\bm a}$. This will be presented in the upcoming theorem. 

It is easy to see that every parallel translation $\mathcal{A}_{\bm a}$ automatically leads to an assigning $\alpha_{\bm a}$ of the matroid $M_{\mathcal{A}_{\bm o}}$ defined by that for every circuit $C\in \mathscr{C}(M_{\mathcal{A}_{\bm o}})=\mathscr{C}(\mathcal{A}_{\bm o})$,
\begin{equation}\label{Assigning-Arrangement}
\alpha_{\bm a}(C):=
\begin{cases}
0,&\text{if $C\in\mathscr{C}(\mathcal{A}_{\bm a})$};\\
1,&\text{otherwise}.
\end{cases}
\end{equation}
Recall from \autoref{Semimatroid-Inverse} that the triple $\big(E,\mathcal{C}(M_{\mathcal{A}_{\bm o}},\alpha_{\bm a}),r_{M_{\mathcal{A}_{\bm o}},\alpha_{\bm a}}\big)$ determined by the assigning matroid $(M_{\mathcal{A}_{\bm o}},\alpha_{\bm a})$ is a semimatroid and aligns with the semimatroid $\big(E,\mathcal{C}(\mathcal{A}_{\bm a},E),r_{\mathcal{C}(\mathcal{A}_{\bm a},E}\big)$. Following this, we establish the next result: every flat of the discriminantal arrangement $\delta\mathcal{A}_{\bm o}$ gives rise to a unique semimatroid (assigning matroid), which implies \autoref{Classification1} directly.

\begin{theorem}\label{Classification2}
Let $\bm a,\bm a'\in\mathbb{F}^{|E|}$.  Then the following statements are equivalent:
\begin{itemize}
\item[\rm 1.] $\big(E,\mathcal{C}(\mathcal{A}_{\bm a},E),r_{\mathcal{C}(\mathcal{A}_{\bm a},E)}\big)=\big(E,\mathcal{C}(\mathcal{A}_{\bm a'},E),r_{\mathcal{C}(\mathcal{A}_{\bm a'},E)}\big)$;
\item[\rm 2.] $(M_{\mathcal{A}_{\bm o}},\alpha_{\bm a})=(M_{\mathcal{A}_{\bm o}},\alpha_{\bm a'})$;
\item[\rm 3.] $\alpha_{\bm a}=\alpha_{\bm a'}$;
\item[\rm 4.] $\bm a$ and $\bm a'$ belong to $M(\delta\mathcal{A}_{\bm o}/X)$ for some flat $X$ of $\delta\mathcal{A}_{\bm o}$. 
\end{itemize}
Moreover, all parallel translations $\mathcal{A}_{\bm a}$ of $\mathcal{A}_{\bm o}$ correspond exactly to $|\mathscr{L}(\delta\mathcal{A}_{\bm o})|$ distinct semimatroids $\big(E,\mathcal{C}(\mathcal{A}_{\bm a},E),r_{\mathcal{C}(\mathcal{A}_{\bm a},E)}\big)$, and to $|\mathscr{L}(\delta\mathcal{A}_{\bm o})|$ distinct assigning matroids $(M_{\mathcal{A}_{\bm o}},\alpha_{\bm a})$.
\end{theorem}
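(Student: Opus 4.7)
The plan is to establish the cyclic chain $(1) \Leftrightarrow (2) \Leftrightarrow (3) \Leftrightarrow (4)$ and then read the counting statement off from $(3) \Leftrightarrow (4)$. The equivalences $(1) \Leftrightarrow (2) \Leftrightarrow (3)$ are largely formal. Since both assigning matroids in $(2)$ share the same underlying matroid $M_{\mathcal{A}_{\bm o}}$, equality of the pairs reduces to equality of the assignings, giving $(2) \Leftrightarrow (3)$. For $(1) \Leftrightarrow (2)$, I would invoke \autoref{Semimatroid-Inverse}: it identifies the semimatroid of $\mathcal{A}_{\bm a}$ with the triple constructed from $(M_{\mathcal{A}_{\bm o}}, \alpha_{\bm a})$, and conversely $\alpha_{\bm a}$ is recovered from the semimatroid by testing which circuits of $M_{\mathcal{A}_{\bm o}}$ lie in $\mathcal{C}(\mathcal{A}_{\bm a}, E)$; hence each of $(1)$ and $(2)$ determines the other.

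The crux is therefore $(3) \Leftrightarrow (4)$. The first step is to prove the pointwise identity
\[
\alpha_{\bm a}(C) = 0 \iff \bm a \in H_C, \qquad \forall\, C \in \mathscr{C}(M_{\mathcal{A}_{\bm o}}),
\]
where $H_C$ is the hyperplane from \eqref{Derived-Arrangement}. By \eqref{Assigning-Arrangement}, the left side says the subsystem $\bm\alpha_e \cdot \bm x = a_e$ with $e \in C$ is consistent. Since $C$ is a circuit of $\mathcal{A}_{\bm o}$, the unique (up to a nonzero scalar) linear dependence $\sum_{e \in C} c_e \bm\alpha_e = \bm 0$ provided by \eqref{Circuit-Vector} shows, by Rouché–Capelli, that solvability is equivalent to the single scalar condition $\sum_{e \in C} c_e a_e = 0$, i.e., $\bm c_C \cdot \bm a = 0$, which is exactly $\bm a \in H_C$. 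The scalar ambiguity in $\bm c_C$ is harmless because it preserves the hyperplane $H_C$.

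With this identity in hand, introduce $\mathcal{B}(\bm a) := \{H_C \in \delta\mathcal{A}_{\bm o} : \bm a \in H_C\}$ and the flat $X(\bm a) := \bigcap \mathcal{B}(\bm a) \in \mathscr{L}(\delta\mathcal{A}_{\bm o})$. A short argument shows that the localization $\delta\mathcal{A}_{\bm o}^{X(\bm a)}$ coincides with $\mathcal{B}(\bm a)$: any $H \in \delta\mathcal{A}_{\bm o}$ containing $X(\bm a)$ contains $\bm a$ a fortiori, so lies in $\mathcal{B}(\bm a)$, and the reverse inclusion is definitional. Consequently $\bm a \in M(\delta\mathcal{A}_{\bm o}/X(\bm a))$, and $X(\bm a)$ is the unique flat $X$ for which $\bm a \in M(\delta\mathcal{A}_{\bm o}/X)$. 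Combining these observations, $\alpha_{\bm a} = \alpha_{\bm a'}$ is equivalent in turn to $\mathcal{B}(\bm a) = \mathcal{B}(\bm a')$, to $X(\bm a) = X(\bm a')$, and to the existence of a common flat $X$ with $\bm a, \bm a' \in M(\delta\mathcal{A}_{\bm o}/X)$, which establishes $(3) \Leftrightarrow (4)$. The disjoint decomposition
\[
\mathbb{F}^{|E|} = \bigsqcup_{X \in \mathscr{L}(\delta\mathcal{A}_{\bm o})} M(\delta\mathcal{A}_{\bm o}/X)
\]
then yields exactly $|\mathscr{L}(\delta\mathcal{A}_{\bm o})|$ equivalence classes under the common relation $(1)$–$(4)$, proving the counting statement.

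The step I expect to be most delicate is the circuit-vector identity $\alpha_{\bm a}(C) = 0 \iff \bm a \in H_C$: the reduction of solvability of the subsystem indexed by $C$ to the single linear condition must be argued carefully from the fact that $C$ is a \emph{minimal} dependent set of normals, so that any obstruction to solvability is forced to be a scalar multiple of $\bm c_C$. Once that is in place, the remaining ingredients are standard linear algebra on the intersection semilattice of $\delta\mathcal{A}_{\bm o}$ and a direct appeal to \autoref{Semimatroid-Inverse}.
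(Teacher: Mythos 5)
Your proposal is correct and follows essentially the same route as the paper: the formal equivalences $(1)\Leftrightarrow(2)\Leftrightarrow(3)$ via \autoref{Semimatroid-Inverse}, the reduction of $(3)\Leftrightarrow(4)$ to the pointwise identity $\alpha_{\bm a}(C)=0\iff\bm a\in H_C$ proved through the circuit vector of \eqref{Circuit-Vector}, and the counting claim via the stratification $\mathbb{F}^{|E|}=\bigsqcup_{X}M(\delta\mathcal{A}_{\bm o}/X)$. Your treatment of the solvability criterion and of the uniqueness of the flat $X(\bm a)$ is slightly more explicit than the paper's, but the argument is the same.
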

\begin{proof}
It is an immediate result of \autoref{Semimatroid-Inverse} that statement $1$ is equivalent to statement $2$. Trivially, statement $2$ is equivalent to statement $3$. The remaining task is to prove the equivalence of statements $3$ and $4$. According to the definition of the discriminantal arrangement $\delta\mathcal{A}_{\bm o}$ in \eqref{Derived-Arrangement}, this reduces to verifying that $\alpha_{\bm a}(C)=0$ if and only if $\bm a\in H_C$ for every circuit $C\in\mathscr{C}(M_{\mathcal{A}_{\bm o}})$.  Let $C$ be a circuit of $M_{\mathcal{A}_{\bm o}}$. From the definition of $\alpha_{\bm a}$ in \eqref{Assigning-Arrangement}, $\alpha_{\bm a}(C)=0$ if and only if $C$ is an affine circuit of $\mathcal{A}_{\bm a}$. From the linear algebra, this is equivalent to both sets $\{\bm\alpha_e\mid e\in C\}$ and $\big\{(\bm\alpha_e,a_e)\mid e\in C\big\}$ being minimal linearly dependent sets. Using \eqref{Circuit-Vector}, this is further equivalent to $\bm a\in H_C$. Thus, the equivalence of statements $3$ and $4$ holds.

Moreover, let $\bm a,\bm a'\in\mathbb{F}^{|E|}$. The equivalence of statements 1 and 4 illustrates that $\big(E,\mathcal{C}(\mathcal{A}_{\bm a},E),r_{\mathcal{C}(\mathcal{A}_{\bm a},E)}\big)=\big(E,\mathcal{C}(\mathcal{A}_{\bm a'},E),r_{\mathcal{C}(\mathcal{A}_{\bm a'},E)}\big)$ if and only if $\bm a$ and $\bm a'$ come from the same stratum $M(\delta\mathcal{A}_{\bm o}/X)$ for some flat $X$ of $\delta\mathcal{A}_{\bm o}$. Together with the  basic decomposition formula 
\[
\mathbb{F}^{|E|}=\bigsqcup_{X\in\mathscr{L}(\delta\mathcal{A}_{\bm o})}M(\delta\mathcal{A}_{\bm o}/X),
\]
we conclude that all parallel translations $\mathcal{A}_{\bm a}$ yield precisely $|\mathscr{L}(\delta\mathcal{A}_{\bm o})|$ distinct semimatroids $\big(E,\mathcal{C}(\mathcal{A}_{\bm a},E),r_{\mathcal{C}(\mathcal{A}_{\bm a},E)}\big)$. Similarly, statements 2 and 4 can derive that all parallel translations $\mathcal{A}_{\bm a}$ define $|\mathscr{L}(\delta\mathcal{A}_{\bm o})|$ distinct assigning matroids $(M_{\mathcal{A}_{\bm o}},\alpha_{\bm a})$.
\end{proof}

Furthermore, we obtain a unified order-preserving relation from assignings $\alpha_{\bm a}$ to compatible characteristic polynomials $\chi(M_{\mathcal{A}_{\bm o}},\alpha_{\bm a};t)$ when both are naturally ordered. 
\begin{corollary}\label{Comparison1}
Let $\bm a,\bm a'\in\mathbb{F}^{|E|}$. 
\begin{itemize}
\item[{\rm (a)}] If $\mathscr{C}(\mathcal{A}_{\bm a},E)\supseteq\mathscr{C}(\mathcal{A}_{\bm a'},E)$, then 
\[
w_i\big(\mathcal{C}(\mathcal{A}_{\bm a},E)\big)\le w_i\big(\mathcal{C}(\mathcal{A}_{\bm a'},E)\big)\quad\text{for}\quad i=0,1,\ldots,r(\mathcal{A}_{\bm o}).
\]
\item[{\rm (b)}] If  $\alpha_{\bm a}(C)\le\alpha_{\bm a'}(C)$ for all circuits $C$ of $M_{\mathcal{A}_{\bm o}}$, then
\[
w_i(M_{\mathcal{A}_{\bm o}},\alpha_{\bm a})\le w_i(M_{\mathcal{A}_{\bm o}},\alpha_{\bm a'})\quad\text{for}\quad i=0,1,\ldots,r(M_{\mathcal{A}_{\bm o}}).
\]
\end{itemize}
\end{corollary}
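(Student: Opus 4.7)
The plan is to derive both inequalities from the Broken Circuit Theorem for semimatroids (\autoref{Generalized-Broken-Circuit-Theorem}) by a monotonicity argument, then reduce part (b) to part (a) via the semimatroid--assigning-matroid correspondence provided by \autoref{Semimatroid-Inverse} and \autoref{Relation-Compatible-Semimatroid1}. The single linear-algebraic ingredient needed is the Rouché-Capelli observation that a linearly independent set of normals $\{\bm\alpha_e\}_{e\in X}$ produces a consistent inhomogeneous system $\bm\alpha_e\cdot\bm x = b_e$ for any choice of right-hand side, so that an independent subset of $E$ is automatically central in every parallel translation of $\mathcal{A}_{\bm o}$.

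For part (a), I would fix any linear ordering $\prec$ of $E$ and let $\mathcal{B}_i(\bm b)$ denote the family of size-$i$ central subsets of $\mathcal{A}_{\bm b}$ containing no broken circuit of $\mathcal{A}_{\bm b}$ with respect to $\prec$. By \autoref{Generalized-Broken-Circuit-Theorem}, $w_i(\mathcal{C}(\mathcal{A}_{\bm b}, E)) = |\mathcal{B}_i(\bm b)|$, so it suffices to establish the inclusion $\mathcal{B}_i(\bm a) \subseteq \mathcal{B}_i(\bm a')$. The hypothesis implies that every matroid circuit of $M_{\mathcal{A}_{\bm o}}$ central in $\mathcal{A}_{\bm a'}$ is also central in $\mathcal{A}_{\bm a}$, so the circuits of the semimatroid $\mathcal{C}(\mathcal{A}_{\bm a'}, E)$ (and in turn its broken circuits) form subsets of those of $\mathcal{C}(\mathcal{A}_{\bm a}, E)$. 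Given $X \in \mathcal{B}_i(\bm a)$, the absence of broken circuits of $\mathcal{A}_{\bm a}$ in $X$ forces $X$ to contain no circuit of the semimatroid $\mathcal{C}(\mathcal{A}_{\bm a}, E)$ at all (each such circuit strictly contains a broken circuit upon deletion of its $\prec$-minimum), so $X$ is independent in $\mathcal{C}(\mathcal{A}_{\bm a}, E)$. Hence $\{\bm\alpha_e\}_{e\in X}$ is linearly independent in $\mathbb{F}^n$, and by Rouché-Capelli the system $\bm\alpha_e\cdot\bm x = a'_e$ ($e \in X$) is solvable, giving $X \in \mathcal{C}(\mathcal{A}_{\bm a'}, E)$. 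Since the broken circuits of $\mathcal{A}_{\bm a'}$ are contained in those of $\mathcal{A}_{\bm a}$, $X$ contains no broken circuit of $\mathcal{A}_{\bm a'}$ either, so $X \in \mathcal{B}_i(\bm a')$.

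For part (b), the pointwise inequality $\alpha_{\bm a}(C) \le \alpha_{\bm a'}(C)$ on circuits of $M_{\mathcal{A}_{\bm o}}$ immediately gives $\mathscr{C}(M_{\mathcal{A}_{\bm o}}, \alpha_{\bm a}) \supseteq \mathscr{C}(M_{\mathcal{A}_{\bm o}}, \alpha_{\bm a'})$; since a subset is compatible iff every matroid circuit it contains is compatible, this upgrades to $\mathcal{C}(M_{\mathcal{A}_{\bm o}}, \alpha_{\bm a}) \supseteq \mathcal{C}(M_{\mathcal{A}_{\bm o}}, \alpha_{\bm a'})$. By \autoref{Semimatroid-Inverse} these collections coincide with $\mathcal{C}(\mathcal{A}_{\bm a}, E)$ and $\mathcal{C}(\mathcal{A}_{\bm a'}, E)$ respectively, and by \autoref{Relation-Compatible-Semimatroid1} the compatible characteristic polynomials of the assigning matroids agree with the characteristic polynomials of the associated semimatroids, so part (b) is reduced to part (a).

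The only substantive step is the passage from $X \in \mathcal{B}_i(\bm a)$ to $X \in \mathcal{C}(\mathcal{A}_{\bm a'}, E)$, which fuses the standard fact that broken-circuit-free subsets are independent with the linear-algebraic invariance of independence under arbitrary translation. I do not anticipate a serious obstacle here, since the argument is a direct relative analogue of the inclusion chain used in the proof of \autoref{Comparison}; the only care required is to verify that the correspondence between parallel translations and assignings lines up cleanly enough for part (b) to piggyback on part (a), which is exactly the content of \autoref{Semimatroid-Inverse} and \autoref{Relation-Compatible-Semimatroid1}.
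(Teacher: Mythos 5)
Your proposal is correct and follows essentially the same route as the paper: part (a) is the relative analogue of the inclusion argument in the proof of \autoref{Comparison} (broken-circuit-free central sets are independent, independence of the normals makes the set central in every parallel translation, and the broken circuits of $\mathcal{A}_{\bm a'}$ sit inside those of $\mathcal{A}_{\bm a}$), and part (b) is reduced to part (a) via the equivalence of the two hypotheses and \autoref{Relation-Compatible-Semimatroid1}(b), exactly as in the paper. You merely spell out the details that the paper compresses into ``by the same argument as in the proof of \autoref{Comparison}.''
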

\begin{proof}
By the same argument as in the proof of \autoref{Comparison}, we derive that 
\[
w_i\big(\mathcal{C}(\mathcal{A}_{\bm a},E)\big)\le w_i\big(\mathcal{C}(\mathcal{A}_{\bm a'},E)\big).
\]
Note that for any $\bm a,\bm a'\in\mathbb{F}^{|E|}$, 
\[
\mathscr{C}(\mathcal{A}_{\bm a},E)\supseteq \mathscr{C}(\mathcal{A}_{\bm a'},E)\Longleftrightarrow\mathscr{C}(\mathcal{A}_{\bm a})\supseteq\mathscr{C}(\mathcal{A}_{\bm a'})\Longleftrightarrow \alpha_{\bm a}(C)\le\alpha_{\bm a'}(C),\;\forall\, C\in \mathscr{C}(\mathcal{A}_{\bm o}).
\]
It follows from part (b) of \autoref{Relation-Compatible-Semimatroid1} that the second part holds.
\end{proof}

Using assignings of $M_{\mathcal{A}_{\bm o}}$, we now restate the unified comparison relations among unsigned coefficients of the characteristic polynomials for parallel translations, established in \cite[Theorem 1.2]{CFW2021}. Let $\bm a\in\mathbb{F}^{|E|}$. Write 
\[
 \chi(\mathcal{A}_{\bm a};t)=w_0(\mathcal{A}_{\bm a})t^n- w_1(\mathcal{A}_{\bm a})t^{n-1}+\cdots+(-1)^{n-r(\mathcal{A}_{\bm a})} w_{r(\mathcal{A}_{\bm a})}(\mathcal{A}_{\bm a}).
\]
Immediately, \autoref{Comparison1} directly yields \cite[Theorem 1.2]{CFW2021}.
\begin{proposition}[\cite{CFW2021}, Theorem 1.2]
Let $\bm a,\bm a'\in\mathbb{F}^{|E|}$. If $\alpha_{\bm a}(C)\le\alpha_{\bm a'}(C)$ for all circuits $C$ of $M_{\mathcal{A}_{\bm o}}$, then
\[ 
w_i(\mathcal{A}_{\bm a})\le w_i(\mathcal{A}_{\bm a'}),\quad i=0,1,\ldots,r(\mathcal{A}_{\bm o}).
\]
\end{proposition}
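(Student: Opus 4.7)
The plan is to reduce the inequality to the analogous inequality for the induced semimatroids $\mathcal{C}(\mathcal{A}_{\bm a},E)$ and $\mathcal{C}(\mathcal{A}_{\bm a'},E)$, to which part (a) of \autoref{Comparison1} applies. The identity \eqref{Semimatroid-Arrangement} provides the bridge between the coefficients $w_i(\mathcal{A}_{\bm a})$ of the arrangement's characteristic polynomial and the Whitney numbers $w_i(\mathcal{C}(\mathcal{A}_{\bm a},E))$ of the semimatroid.

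First, I would verify that $r(\mathcal{A}_{\bm a})=r(\mathcal{A}_{\bm o})$ for every $\bm a\in\mathbb{F}^{|E|}$. Indeed, any independent set of $M_{\mathcal{A}_{\bm o}}$ yields a consistent linear system in $\mathcal{A}_{\bm a}$, so it remains central and its rank in $\mathcal{A}_{\bm a}$ agrees with its rank in $M_{\mathcal{A}_{\bm o}}$; taking $X$ to be a basis of $M_{\mathcal{A}_{\bm o}}$ shows that the maximum rank among central subsets of $\mathcal{A}_{\bm a}$ equals $r(M_{\mathcal{A}_{\bm o}})=r(\mathcal{A}_{\bm o})$. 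Substituting this into \eqref{Semimatroid-Arrangement},
\[
\chi(\mathcal{A}_{\bm a};t)=t^{n-r(\mathcal{A}_{\bm o})}\,\chi\big(\mathcal{C}(\mathcal{A}_{\bm a},E);t\big),
\]
and comparing coefficients of $t^{n-i}$ on both sides gives
\[
w_i(\mathcal{A}_{\bm a})=w_i\big(\mathcal{C}(\mathcal{A}_{\bm a},E)\big),\qquad i=0,1,\ldots,r(\mathcal{A}_{\bm o}).
\]
The same identities hold with $\bm a$ replaced by $\bm a'$, because the shift exponent $n-r(\mathcal{A}_{\bm o})$ depends only on the centralization.

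Next, the chain of equivalences derived in the proof of \autoref{Comparison1},
\[
\mathscr{C}(\mathcal{A}_{\bm a},E)\supseteq\mathscr{C}(\mathcal{A}_{\bm a'},E)\iff\alpha_{\bm a}(C)\le\alpha_{\bm a'}(C)\;\forall\, C\in\mathscr{C}(\mathcal{A}_{\bm o}),
\]
translates the hypothesis of the proposition into the hypothesis of \autoref{Comparison1}(a). Applying that part then yields $w_i(\mathcal{C}(\mathcal{A}_{\bm a},E))\le w_i(\mathcal{C}(\mathcal{A}_{\bm a'},E))$, and combining this with the coefficient identities above completes the proof.

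Since every step is direct bookkeeping with already-established results, I do not anticipate any substantive obstacle; the only point requiring attention is confirming that the exponent $n-r(\mathcal{A}_{\bm o})$ in \eqref{Semimatroid-Arrangement} is independent of $\bm a$, so that the indexing of the unsigned coefficients lines up on both sides of the inequality.
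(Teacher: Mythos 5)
Your proposal is correct and follows essentially the same route as the paper, which presents the statement as an immediate consequence of \autoref{Comparison1} together with the coefficient identification coming from \eqref{Semimatroid-Arrangement}. You merely spell out the bookkeeping (that $r(\mathcal{A}_{\bm a})=r(\mathcal{A}_{\bm o})$ so the shift $t^{n-r(\mathcal{A}_{\bm a})}$ is the same for $\bm a$ and $\bm a'$, hence $w_i(\mathcal{A}_{\bm a})=w_i\big(\mathcal{C}(\mathcal{A}_{\bm a},E)\big)$), which the paper leaves implicit.
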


Furthermore, analogous to matroid,  we consider the poset $\mathscr{L}(M,\alpha)$, which consists of compatible subsets $X$ of $E(M)$ such that no other element $Y\in\mathcal{C}(M,\alpha)$ contains $X$ with rank $r_{M}(X)$, ordered by set inclusion. 
Particularly, $\mathscr{L}(M_{\mathcal{A}_{\bm o}},\alpha_{\bm a})=\mathscr{L}\big(\mathcal{C}(\mathcal{A}_{\bm a},E)\big)$ follows from \autoref{Semimatroid-Inverse}. We end this section with an alternative expression for $\chi(M_{\mathcal{A}_{\bm o}},\alpha_{\bm a};t)$ by the M\"obius function of $\mathscr{L}(M_{\mathcal{A}_{\bm o}},\alpha_{\bm a})$, and with the relation between $\chi(\mathcal{A}_{\bm a};t)$ and $\chi(M_{\mathcal{A}_{\bm o}},\alpha_{\bm a};t)$. 
\begin{corollary}\label{Characteristic-Arrangement-Matroid}
Let $\mathcal{A}=\{H_e:\bm\alpha_e\cdot\bm x=a_e\mid e\in E\}$ be a hyperplane arrangement in $\mathbb{F}^n$ and $\bm a=(a_e)_{e\in E}\in\mathbb{F}^{|E|}$. Then, 
\begin{itemize}
\item[{\rm (a)}]$
\chi(M_{\mathcal{A}_{\bm o}},\alpha_{\bm a};t)=\sum_{X\in\mathscr{L}(M_{\mathcal{A}_{\bm o}},\alpha_{\bm a})}\mu(X)t^{r(M_{\mathcal{A}_{\bm o}})-r_{M_{\mathcal{A}_{\bm o}}}(X)}.
$
\item[{\rm (b)}]$\chi(\mathcal{A};t)=t^{n-r(\mathcal{A})}\chi(M_{\mathcal{A}_{\bm o}},\alpha_{\bm a};t).$
\end{itemize}
\end{corollary}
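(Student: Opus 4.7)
The plan is to deduce both parts of the corollary by chaining together results already established in the paper, using Theorem~\ref{Semimatroid-Inverse} as the bridge between the assigning-matroid formulation and the semimatroid formulation.

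For part (a), I would first invoke Theorem~\ref{Semimatroid-Inverse} to identify the triple $\big(E,\mathcal{C}(M_{\mathcal{A}_{\bm o}},\alpha_{\bm a}),r_{M_{\mathcal{A}_{\bm o}},\alpha_{\bm a}}\big)$ with the semimatroid $\big(E,\mathcal{C}(\mathcal{A}_{\bm a},E),r_{\mathcal{C}(\mathcal{A}_{\bm a},E)}\big)$. In particular, the poset $\mathscr{L}(M_{\mathcal{A}_{\bm o}},\alpha_{\bm a})$ coincides with the geometric semilattice $\mathscr{L}\big(\mathcal{C}(\mathcal{A}_{\bm a},E)\big)$, as the author remarks just before the corollary. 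Since the covectors $\bm\alpha_e$ are nonzero, each singleton $\{e\}$ has semimatroid rank one, so $\mathcal{C}(\mathcal{A}_{\bm a},E)$ is loopless and Theorem~\ref{Mobius-Characteristic-Semi} applies, giving
\[
\chi\big(\mathcal{C}(\mathcal{A}_{\bm a},E);t\big)=\sum_{X\in\mathscr{L}(\mathcal{C}(\mathcal{A}_{\bm a},E))}\mu(X)\,t^{r(\mathcal{C}(\mathcal{A}_{\bm a},E))-r_{\mathcal{C}(\mathcal{A}_{\bm a},E)}(X)}.
\]
Then Corollary~\ref{Relation-Compatible-Semimatroid1}(b) rewrites the left-hand side as $\chi(M_{\mathcal{A}_{\bm o}},\alpha_{\bm a};t)$, and the rank identities $r(\mathcal{C}(\mathcal{A}_{\bm a},E))=r(\mathcal{A}_{\bm a})=r(M_{\mathcal{A}_{\bm o}})$ (which depends only on $\{\bm\alpha_e\}$, not on $\bm a$) together with $r_{\mathcal{C}(\mathcal{A}_{\bm a},E)}(X)=r_{M_{\mathcal{A}_{\bm o}}}(X)$ for $X\in\mathcal{C}(\mathcal{A}_{\bm a},E)$ convert the right-hand side into the desired expression.

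For part (b), I would apply equation~\eqref{Semimatroid-Arrangement} to write $\chi(\mathcal{A};t)=t^{n-r(\mathcal{A})}\chi\big(\mathcal{C}(\mathcal{A},E);t\big)$, and then use Corollary~\ref{Relation-Compatible-Semimatroid1}(b) together with the identification $M_{\mathcal{C}(\mathcal{A}_{\bm a},E)}=M_{\mathcal{A}_{\bm o}}$ and $\alpha_{\mathcal{C}(\mathcal{A}_{\bm a},E)}=\alpha_{\bm a}$ to rewrite $\chi\big(\mathcal{C}(\mathcal{A},E);t\big)$ as $\chi(M_{\mathcal{A}_{\bm o}},\alpha_{\bm a};t)$. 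The only subtle point is the identification $M_{\mathcal{C}(\mathcal{A}_{\bm a},E)}=M_{\mathcal{A}_{\bm o}}$; this follows from equation~\eqref{Rank-Extending} together with the linear-algebra fact that $\{(\bm\alpha_e,a_e):e\in Y\}$ is linearly independent iff $\{\bm\alpha_e:e\in Y\}$ is (one direction is trivial; the other uses that linear independence of $\{\bm\alpha_e\}$ forces the affine system to be consistent, hence $Y\in\mathcal{C}(\mathcal{A}_{\bm a},E)$). This ensures that the two matroids have the same independent sets, and hence the same rank function, so that in particular $\alpha_{\bm a}$ really is the assigning associated with the semimatroid via the construction of Section~\ref{Sec3-1}.

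Overall there is no substantive obstacle: the corollary is a bookkeeping consequence of Theorem~\ref{Semimatroid-Inverse}, Corollary~\ref{Relation-Compatible-Semimatroid1}, Theorem~\ref{Mobius-Characteristic-Semi}, and~\eqref{Semimatroid-Arrangement}. The only step that requires any care is verifying that the matroid $M_{\mathcal{C}(\mathcal{A}_{\bm a},E)}$ extracted from the semimatroid via~\eqref{Rank-Extending} literally equals $M_{\mathcal{A}_{\bm o}}$, so that the assigning and compatible-set language on the matroid side lines up exactly with the semimatroid side of the paper.
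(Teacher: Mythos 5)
Your proposal is correct and follows essentially the same route as the paper: part (a) from Theorem~\ref{Mobius-Characteristic-Semi} via the identification in Theorem~\ref{Semimatroid-Inverse}, and part (b) by combining Corollary~\ref{Relation-Compatible-Semimatroid1}(b) with equation~\eqref{Semimatroid-Arrangement}. You additionally spell out two points the paper leaves implicit --- the looplessness of $\mathcal{C}(\mathcal{A}_{\bm a},E)$ needed to invoke Theorem~\ref{Mobius-Characteristic-Semi}, and the verification that $M_{\mathcal{C}(\mathcal{A}_{\bm a},E)}=M_{\mathcal{A}_{\bm o}}$ --- which is careful but does not change the argument.
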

\begin{proof}
Part (a) is an immediate result of \autoref{Mobius-Characteristic-Semi}. By integrating the two relations: 
\[
\chi(M_{\mathcal{A}_{\bm o}},\alpha_{\bm a};t)=\chi\big(\mathcal{C}(\mathcal{A},E);t\big)
\]
from part (b) of \autoref{Relation-Compatible-Semimatroid1} and
\[
\chi(\mathcal{A};t)=t^{n-r(\mathcal{A})}\chi\big(\mathcal{C}(\mathcal{A},E);t\big)
\] 
from \eqref{Semimatroid-Arrangement}, we deduce $\chi(\mathcal{A};t)=t^{n-r(\mathcal{A})}\chi(M_{\mathcal{A}_{\bm o}},\alpha_{\bm a};t)$. 
\end{proof}

\section{Assigning graphs}\label{Sec6}
In this section, our primary objective is to explore applications of assigning matroids to assigning graphs, with a particular focus on their polynomial invariants and graph colorings. 
\subsection{Compatible chromatic polynomials of assigning graphs}\label{Sec6-1}
This subsection aims to define an assigning graph and its compatible chromatic polynomial, and further to establish a close link between the compatible chromatic polynomial of assigning graphs and the compatible characteristic polynomial of their associated cycle matroids.

Graph terminology can refer to Bondy's book  \cite{Bondy2008}. Let $G=\big(V(G),E(G)\big)$ be a finite graph with vertex set $V(G)$ and edge set $E(G)$, allowing multiple edges and loops. An {\em orientation} of a graph is an assignment of direction to each edge. In this context, each oriented edge is referred to as an {\em arc}. An edge with identical ends is called a {\em loop}, and with distinct ends is a {\em link}. A graph $H$ is a {\em subgraph} of $G$ if $V(H)\subseteq V(G)$ and $E(H)\subseteq E(G)$. When $V(H)=V(G)$, the subgraph $H$ of $G$ is referred to as a {\em spanning subgraph}. Let $X\subseteq E(G)$. The subgraph $G-X$ is obtained from $G$ by removing all edges in $X$. Denote by $G|X$ the spanning subgraph with edges $X$, that is, $G|X:=G-\big(E(G)-X\big)$. The graph $G/X$ is obtained from $G$ by contracting all edges in $X$. The {\em rank function} $r_G$ of $G$ is given by that for any spanning subgraph $H$ of $G$,
\[
r_G(H):=|V(G)|-c(H),
\]
where $c(H)$ is the number of (connected) components of $H$. Throughout this section, we assume that the notation $S^E$ represents the collection of mappings from a finite set $E$ to a set $S$, unless otherwise stated. Members in $S^E$ are considered as vectors indexed by $E$.
  
Graph coloring is a significant subfield in graph theory that originated in the middle of the 19th century with the famous Four Color Problem. A {\em$k$-vertex coloring}, or simply a {\em$k$-coloring}, of $G$ is a mapping $c:V(G)\to S$, where $S$ is a set of distinct $k$ colors, typically $S=\{1, 2, . . . , k\}$. A vertex coloring is said to be {\em proper} if no two adjacent vertices share the same color. More generally, Jaeger, Linial, Payan and Tarsi \cite{JLPT1992} generalized the concept to the group coloring in 1992, which is the dual of group connectivity in graphs. Group connectivity and group colorings of graphs are nicely surveyed in \cite{LSZ2011}. Consider a finite additive Abelian group $A$ with the identity element $0$ as the coloring set. 
Let $ f: E(G)\to A$ be an edge-value function. A vertex coloring $c:V(G)\to A$ is called an {\em$(A, f)$-coloring} if $c(v)-c(u)\ne f(e)$ for each arc $D(e):=uv$ directed from $u$ to $v$ with respect to the orientation $D$. Naturally, the $(A,0)$-coloring agrees with the ordinary proper coloring. 

It is worth noting that there is a close link between $(A, f)$-colorings and zero-free colorations of gain graphs that often goes unnoticed. Specifically, $(A, f)$-colorings essentially correspond to zero-free colorations of gain graphs described in \cite[Section III.4]{Zaslavsky1995}. Consequently, the balanced chromatic polynomial of gain graphs, as introduced by Zaslavsky \cite{Zaslavsky1995}, enumerates the $(A,f)$-colorings. In this section, we introduce compatible chromatic polynomials of an assigning graph counting the $(A, f)$-colorings associated with zero-one assignings of its cycles  (For more details, please refer to see \autoref{Sec7-3}).

A {\em cycle } $C$ of $G$ is a connected 2-regular subgraph of $G$. Its corresponding edge set $E(C)$ is referred to as a {\em circuit} of $G$. Let $\mathscr{C}(G)$ be the collection of cycles of $G$. An {\em assigning} $\alpha$ of $G$ is a mapping from $\mathscr{C}(G)$ to the set $\{0,1\}$. Especially, we define $\alpha\equiv0$ if $G$ contains no cycles, i.e., $\mathscr{C}(G)=\emptyset$. 

With an arbitrary assigning $\alpha$ of $G$, we call the pair $(G,\alpha)$ an {\em assigning graph}. Denote by 
\[
\mathscr{C}(G,\alpha):=\big\{C\in\mathscr{C}(G)\mid \alpha(C)=0\big\},
\] 
which is called the {\em compatible cycle set} of $(G,\alpha)$. A cycle in $\mathscr{C}(G,\alpha)$, and more broadly any subgraph of $G$ whose cycles are in $\mathscr{C}(G,\alpha)$, is {\em $\alpha$-compatible} (simply {\em compatible}) and {\em incompatible} otherwise. Let $\mathcal{C}(G,\alpha)$ be the set of all compatible spanning subgraphs of $(G,\alpha)$.  

The {\em compatible chromatic polynomial} of an assigning graph $(G,\alpha)$ is defined by
\[
\chi(G,\alpha;t):=\sum_{H\in\mathcal{C}(G,\alpha)}(-1)^{|E(H)|}t^{c(H)}.
\] 
In particular, $\chi(G,0;t)$ coincides with the classical chromatic polynomial in \cite{Whitney1932,Whitney1932-1}. 

In the upcoming theorem, we describe a connection between the compatible chromatic polynomial of assigning graphs and the compatible characteristic polynomial of their assigning cycle matroids. The {\em cycle matroid} $M_G$ of $G$ has the edge set $E(G)$ as its ground set, and the rank function $r_{M_G}$ is given by
\[
r_{M_G}(X):=r_G(G|X),\quad\forall\, X\subseteq E(G),
\]
with circuits corresponding to edge sets of all cycles  (see \cite[Proposition 1.1.7]{Oxley2011}). Associated with any assigning $\alpha$ of $G$, we define the assigning $\alpha_{M_G}$ of $M_G$ by that
\[
\alpha_{M_G}\big(E(C)\big):=\alpha(C), \quad\forall\,C\in\mathscr{C}(G).
\]
\begin{theorem}\label{Semimatroid-Graph}
Let $(G,\alpha)$ be an assigning graph. Then
\[
\chi(G,\alpha;t)=t^{c(G)}\chi(M_G,\alpha_{M_G};t).
\]
\end{theorem}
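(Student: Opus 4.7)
The plan is to rewrite both sides of the claimed identity as sums over the same indexing set via the natural bijection between spanning subgraphs $H$ of $G$ and subsets $X\subseteq E(G)$, given by $H\leftrightarrow E(H)=X$, i.e.\ $H=G|X$. Under this bijection the exponents in the two polynomials match after the factor $t^{c(G)}$ is extracted, so the problem reduces to verifying a term-by-term equality.

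First I would translate the numerical data. With $H=G|X$ we have $|E(H)|=|X|$, and by the definition of $r_{M_G}$ quoted right before the theorem,
\[
r_{M_G}(X)=r_G(G|X)=|V(G)|-c(H).
\]
Since $r(M_G)=r_{M_G}(E(G))=|V(G)|-c(G)$, it follows that
\[
r(M_G)-r_{M_G}(X)=c(H)-c(G).
\]
Hence for every subset $X\subseteq E(G)$ the summand $(-1)^{|X|}t^{r(M_G)-r_{M_G}(X)}$ on the matroid side equals $t^{-c(G)}(-1)^{|E(H)|}t^{c(H)}$ on the graph side.

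Next I would verify that the index sets $\mathcal{C}(G,\alpha)$ and $\mathcal{C}(M_G,\alpha_{M_G})$ correspond under the bijection $H\leftrightarrow E(H)$. The key point, which I would isolate as a short observation, is that the circuits of $M_G$ contained in a subset $X\subseteq E(G)$ are precisely the edge sets of cycles of the spanning subgraph $G|X$; this is immediate from the fact that circuits of $M_G$ are exactly edge sets of cycles of $G$. Combining this with the definition $\alpha_{M_G}(E(C))=\alpha(C)$, the condition ``every circuit of $M_G$ contained in $X$ lies in $\mathscr{C}(M_G,\alpha_{M_G})$'' becomes ``every cycle of $G|X$ lies in $\mathscr{C}(G,\alpha)$,'' which is exactly $\alpha$-compatibility of $H=G|X$. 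Thus $X\in\mathcal{C}(M_G,\alpha_{M_G})$ iff $H\in\mathcal{C}(G,\alpha)$.

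Putting these together, I would finish by the direct computation
\[
t^{c(G)}\chi(M_G,\alpha_{M_G};t)=\sum_{X\in\mathcal{C}(M_G,\alpha_{M_G})}(-1)^{|X|}t^{c(G)+r(M_G)-r_{M_G}(X)}=\sum_{H\in\mathcal{C}(G,\alpha)}(-1)^{|E(H)|}t^{c(H)}=\chi(G,\alpha;t).
\]
The proof is essentially bookkeeping; the only part worth flagging is the compatibility correspondence, since it relies on the fact that the assigning $\alpha_{M_G}$ has been defined precisely so as to make the bijection cycle-by-cycle faithful. Once that is observed, no induction or deletion-contraction argument is needed.
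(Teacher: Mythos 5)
Your proof is correct and follows essentially the same route as the paper: both identify $\mathcal{C}(G,\alpha)$ with $\mathcal{C}(M_G,\alpha_{M_G})$ via $H\mapsto E(H)$, translate $r(M_G)-r_{M_G}(X)$ into $c(G|X)-c(G)$ using $r_{M_G}(X)=r_G(G|X)=|V(G)|-c(G|X)$, and conclude by a term-by-term comparison. Your explicit verification that circuits of $M_G$ inside $X$ are exactly the edge sets of cycles of $G|X$ is a detail the paper leaves implicit, but the argument is the same.
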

\begin{proof}
According to $\alpha_{M_G}\big(E(C)\big)=\alpha(C)$ for any $C\in\mathscr{C}(G,\alpha)$, there exists a natural one-to-one correspondence between $\mathcal{C}(G,\alpha)$ and $\mathcal{C}(M_G,\alpha_{M_G})$, with every $H\in \mathcal{C}(G,\alpha)$ corresponding to its edge set $E(H)\in \mathcal{C}(M_G,\alpha_{M_G})$. By $r_G(G|X)=|V(G)|-c(G|X)$ and $r_{M_G}(X)=r_G(G|X)$ for all $X\subseteq E(G)$, we arrive at 
\begin{align*}
\chi(G,\alpha;t)&=\sum_{X\in\mathcal{C}(M_G,\alpha_{M_G})}(-1)^{|X|}t^{c(G|X)}\\
&=t^{c(G)}\sum_{X\in\mathcal{C}(M_G,\alpha_{M_G})}(-1)^{|X|}t^{r(M_G)-r_{M_G}(X)}\\
&=t^{c(G)}\chi(M_G,\alpha_{M_G};t).
\end{align*}
This completes the proof.
\end{proof}
\subsection{Colorings}\label{Sec7-3}
In this subsection, we revisit general graph colorings from the perspective of assigning matroids and hyperplane arrangements.

For convenience, we shall work with a fixed field $\mathbb{F}$ as the coloring set and a fixed reference orientation $D$ of $G$. Let $D(G)$ represent the set of all arcs of $G$ with respect to $D$. For simplicity, we assume the vertex set $V(G)$ of $G$ to be the set $\{1,2,\ldots,n\}$. We denote the arc from $i$ to $j$ as  $e_{ij}$ and its opposite orientation  $-e_{ij}$. Associated with the orientation $D$,  a {\em graphic arrangement} $\mathcal{A}_G$ ($\mathcal{A}_G$ may be a multi-arrangement) in $\mathbb{F}^n$ is defined as
\[
\mathcal{A}_G:=\big\{H_{e_{ij}}: x_j-x_i=0\mid e_{ij}\in D(G)\big\}.
\]
We need to be especially careful when $G$ contains a loop $e_{ii}\in D(G)$. Each such loop $e_{ii}$ corresponds precisely to the degenerate hyperplane $H_{e_{ii}}:x_i-x_i=0$, which is exactly the whole space $\mathbb{F}^n$. In this case, the characteristic polynomial $\chi(\mathcal{A}_G;t)$ is equal to zero. 

Given a vector or a function $\bm a=(a_{ij})_{e_{ij}\in D(G)}\in\mathbb{F}^{|D(G)|}$, we now consider parallel translations $\mathcal{A}_{G,\bm a}$ of the graphic arrangement $\mathcal{A}_G$ as follows:
\[
\mathcal{A}_{G,\bm a}:=\big\{H_{e_{ij},a_{ij}}: x_j-x_i=a_{ij}\mid e_{ij}\in D(G)\big\},
\]
which is called the {\em affinographic hyperplane arrangement} by Forge and Zaslavsky \cite{Zaslavsky2003}. Remarkable, the matroid $M_{\mathcal{A}_G}$ given by $\mathcal{A}_G$ coincides precisely with the cycle matroid $M_G$ defined by $G$. Recall from \eqref{Assigning-Arrangement} that every parallel translation $\mathcal{A}_{G,\bm a}$ naturally defines an assigning $\alpha_{\bm a}$ of $G$ as follows: for any cycle $C$ of $G$,
\[
\alpha_{\bm a}(C):=
\begin{cases}
0,&\text{if $E(C)\in\mathscr{C}(\mathcal{A}_{G,\bm a})$};\\
1,&\text{otherwise}.
\end{cases}
\]
Such an assigning $\alpha_{\bm a}$ is known as an {\em admissible assigning} of $G$. In this case, we call the pair $(G,\alpha_{\bm a})$ an {\em admissible assigning graph}. 

Furthermore, we define the poset $\mathscr{L}(G,\alpha)$ as the collection of compatible spanning subgraphs $H$ of $(G,\alpha)$ such that there is no other compatible spanning subgraph $H'$ with rank $r_G(H)$ satisfying $E(H)\subseteq E(H')$, ordered by edge set inclusion. The following result shows that the $(\mathbb{F},\bm a)$-colorings can be geometrically realized through the parallel translation $\mathcal{A}_{G,\bm a}$ of the graphic arrangement $\mathcal{A}_G$, and also provides an alternative expression for the compatible chromatic polynomial $\chi(G,\alpha_{\bm a};t)$ using the M\"obius function of $\mathscr{L}(G,\alpha_{\bm a})$. 
\begin{proposition}\label{Arrangement-Graph-Chromatic}
Let $\bm a=(a_{ij})_{e_{ij}\in D(G)}\in\mathbb{F}^{|D(G)|}$. Then,
\begin{itemize}
\item [{\rm(a)}] $\chi(G,\alpha_{\bm a};t)=\chi(\mathcal{A}_{\bm a};t)$.
\item [{\rm(b)}] $\chi(G,\alpha_{\bm a};t)=\sum_{H\in\mathscr{L}(G,\alpha_{\bm{a}})}\mu(X)t^{c(H)}$.
\item [{\rm(c)}]Let $p$ be a large enough prime number. If $\mathbb{F}$ is the finite field $\mathbb{F}_q$ of $q=p^r$ elements, then the number of $(\mathbb{F}_q,\bm a)$-colorings of $G$ equals the number of points in $M(\mathcal{A}_{G,\bm a})$.
\end{itemize}
\end{proposition}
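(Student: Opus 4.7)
The plan is to exploit the two bridges already built in the paper: \autoref{Semimatroid-Graph} relates the compatible chromatic polynomial of $(G,\alpha)$ to the compatible characteristic polynomial of the cycle matroid $M_G$, while \autoref{Characteristic-Arrangement-Matroid}(b) relates $\chi(\mathcal{A};t)$ to $\chi(M_{\mathcal{A}_{\bm o}},\alpha_{\bm a};t)$. For the affinographic setting the centralization of $\mathcal{A}_{G,\bm a}$ is the graphic arrangement $\mathcal{A}_G$, whose underlying matroid is $M_{\mathcal{A}_G}=M_G$, with rank $r(\mathcal{A}_G)=n-c(G)$. Moreover, the two candidate assignings of $M_G$ — namely $\alpha_{M_G}$ coming from the graph $(G,\alpha_{\bm a})$ and $\alpha_{\bm a}$ coming from the arrangement $\mathcal{A}_{G,\bm a}$ via \eqref{Assigning-Arrangement} — agree, because the affine circuits of $\mathcal{A}_{G,\bm a}$ are exactly those cycles $C$ of $G$ for which $\bm a$ satisfies the consistency relation along $C$. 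Feeding these identifications into the two cited formulas gives part (a) immediately: $\chi(\mathcal{A}_{G,\bm a};t)=t^{n-r(\mathcal{A}_{G,\bm a})}\chi(M_G,\alpha_{M_G};t)=t^{c(G)}\chi(M_G,\alpha_{M_G};t)=\chi(G,\alpha_{\bm a};t)$.

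For part (b), the plan is to apply \autoref{Mobius-Characteristic-Semi} to the semimatroid $\mathcal{C}(M_G,\alpha_{M_G})$, which has no loops whenever $\mathscr{C}(G,\alpha_{\bm a})$ is loop-free, obtaining
\[
\chi(M_G,\alpha_{M_G};t)=\sum_{X\in\mathscr{L}(\mathcal{C}(M_G,\alpha_{M_G}))}\mu(X)\,t^{r(M_G)-r_{M_G}(X)}.
\]
The assignment $X\mapsto G|X$ is an order isomorphism from $\mathscr{L}(\mathcal{C}(M_G,\alpha_{M_G}))$ onto $\mathscr{L}(G,\alpha_{\bm a})$: both posets single out the compatible objects that are maximal among compatible objects of their rank. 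Using $r_{M_G}(X)=|V(G)|-c(G|X)$ together with $r(M_G)=|V(G)|-c(G)$, the exponent becomes $c(H)-c(G)$, and multiplying by the factor $t^{c(G)}$ supplied by \autoref{Semimatroid-Graph} absorbs the shift and yields the claimed formula.

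For part (c), the argument reduces to a direct bijection. Identify a map $c\colon V(G)\to\mathbb{F}_q$ with the vector $(c(1),\dots,c(n))\in\mathbb{F}_q^n$. Then $c$ is an $(\mathbb{F}_q,\bm a)$-coloring exactly when $c(j)-c(i)\neq a_{ij}$ for every arc $e_{ij}\in D(G)$, which is precisely the condition that the vector misses every hyperplane $H_{e_{ij},a_{ij}}$ of $\mathcal{A}_{G,\bm a}$, i.e. lies in $M(\mathcal{A}_{G,\bm a})$. The hypothesis that $p$ is a sufficiently large prime is used to guarantee that the semimatroid determined by $\mathcal{A}_{G,\bm a}$ over $\mathbb{F}_q$ coincides with the one over $\mathbb{F}$, so that no accidental collapse among the normal vectors $(\bm\alpha_{e_{ij}},a_{ij})$ changes the set of affine circuits; beyond that, the counting identity is tautological.

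The main obstacle I foresee is part (b): one has to be careful that the poset isomorphism $\mathscr{L}(\mathcal{C}(M_G,\alpha_{M_G}))\cong\mathscr{L}(G,\alpha_{\bm a})$ really follows from the stated definitions (and not just from intuition), and that the rank-to-component-count conversion lines up exactly with the prefactor $t^{c(G)}$ from \autoref{Semimatroid-Graph}. Parts (a) and (c) are each a couple of lines once the conventions are unwound.
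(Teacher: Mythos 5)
Your proposal is correct and follows essentially the same route as the paper: parts (a) and (b) are obtained by chaining \autoref{Semimatroid-Graph} with \autoref{Characteristic-Arrangement-Matroid} after identifying $M_{\mathcal{A}_G}=M_G$ and the two induced assignings, exactly as the paper does. For part (c) the paper formally invokes Athanasiadis's finite-field theorem, whereas you observe (correctly) that the stated identity is already the tautological bijection between colorings and complement points; both readings are fine.
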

\begin{proof}
Parts (a) and (b) are established by \autoref{Characteristic-Arrangement-Matroid} and \autoref{Semimatroid-Graph}. Together with \cite[Theorem 2.2]{Athanasiadis1996} that $\chi(\mathcal{A}_{G,\bm a},q)$ is the number of points in the complement $M(\mathcal{A}_{G,\bm a})$,  we arrive at that part (c) holds.
\end{proof}

It should be noted that an admissible assigning graph $(G,\alpha_{\bm a})$ corresponds to a gain graph $(G,\bm a)$ that was first introduced by Zaslavsky in \cite{Zaslavsky1989}. In this context, the compatible chromatic polynomial aligns with the balanced chromatic polynomial proposed by Zaslavsky in \cite{Zaslavsky1995} (see also \cite[(2.1)]{FZ2007}). Therefore, part (a) of \autoref{Arrangement-Graph-Chromatic} can be derived from \cite[Theorem III.5.2]{Zaslavsky1995} and \cite[Corollary IV.4.5]{Zaslavsky2003}; and part (b) of \autoref{Arrangement-Graph-Chromatic} is adaptation of results in \cite[Sections III.4 and III.5]{Zaslavsky1995} (see also \cite[(2.2)]{FZ2007}).
\section*{Acknowledgements}
The work is supported by National Natural Science Foundation of China (12301424).


\begin{thebibliography}{99}\setlength{\itemsep}{-.0mm}
\bibitem{AHE2018}
K. Adiprasito, J. Huh, E. Katz. Hodge theory for combinatorial geometries. Ann. of Math. (2) 188 (2018), 381--452.

\bibitem{Ardila2007}
F. Ardila. Semimatroids and their Tutte polynomials. Rev. Colombiana Mat. 41 (2007), 39--66.

\bibitem{Athanasiadis1996}
C. A. Athanasiadis. Characteristic polynomials of subspace arrangements and finite fields. Adv. Math. 122 (1996), 193--233.

\bibitem{BB1997} M. Bayer, K. A. Brandt. Discriminantal arrangements, fiber polytopes and formality. J. Algebraic Combin. 6 (1997), 229--246.

\bibitem{BL2017}
S. Backman, M. Lenz. A convolution formula for Tutte polynomials of arithmetic matroids and other combinatorial structures. S\'em.
Lothar. Combin. 78B (2017), Art. 4, 12pp.

\bibitem{Birkhoff1912}
G. D. Birkhoff. A determinant formula for the number of ways of coloring a map. Ann. of Math. 14 (1912), 42--46.

\bibitem{Bondy2008}
J. A. Bondy, U. S. R. Murty. Graph Theory. Graduate Texts in Mathematics, vol. 244, Springer, 2008.

\bibitem{Brylawski1977}
T. Brylawski. The broken-circuit complex. Trans. Amer. Math. Soc. 234 (1977), 417--433.

\bibitem{CFW2021}
B. Chen, H. Fu, S. Wang. Parallel translates of represented matroids. Adv. in Appl. Math. 127 (2021),  Paper No. 102176.

\bibitem{Crapo1969}
H. H. Crapo. The Tutte polynomial. Aequationes Math. 3 (1969), 211--229.

\bibitem{DFM2018}
C. Dupont, A. Fink, L. Moci. Universal Tutte characters via combinatorial coalgebras. Algebr. Comb. 1 (2018), 603--651.

\bibitem{EL1998}
G. Etienne, M. Las Vergnas. External and inrenal elements of a matroid basis. Discrete Math. 179 (1998), 111--119.

\bibitem{Falk1994}
M. Falk. A note on discriminantal arrangements. Proc. Amer. Math. Soc. 122 (1994), 1221--1227.

\bibitem{FZ2007}
D. Forge, T. Zaslavsky. Lattice point counts for the Shi arrangement and other affnographic hyperplane arrangements. J. Comb. Theory, Ser. B 114 (2007), 97--109.

\bibitem{FZ2016}
D. Forge, T. Zaslavsky. Lattice points in orthotopes and a huge polynomial Tutte invariant of weighted gain graphs. J. Comb. Theory, Ser. B 118 (2016), 186--227.

\bibitem{FRW2025}
H. Fu, X. Ren, S. Wang. Counting flows of b-compatible graphs. Adv. in Appl. Math. 168 (2025),  Paper No. 102901.

\bibitem{FW2023}
H. Fu, S. Wang. Modifications of hyperplane arrangements. J. Combin. Theory Ser. A  200 (2023),  Paper No. 105797.

\bibitem{Heron1972}
A. P. Heron. Matroid polynomials. In: Combinatorics, Institute of Math. and its Applications, Southend-on-Sea, 1972, pp. 164--202.

\bibitem{Hoggar1974}
S. G. Hoggar. Chromatic polynomials and logarithmic concavity. J. Combin. Theory Ser. B 16 (1974), 248--254.

\bibitem{Huh2012}
J. Huh. Milnor numbers of projective hypersurfaces and the chromatic polynomial of graphs. J. Amer. Math. Soc. 25 (2012), 907--927.

\bibitem{Huh-Katz2012}
J. Huh, E. Katz. Log-concavity of characteristic polynomials and the Bergman fan of matroids. Math. Ann. 354 (2012), 1103--1116.

\bibitem{JLPT1992}
F. Jaeger, N. Linial, C. Payan, M. Tarsi. Group connectivity of graphs--a nonhomongenous analogue of nowhere-zero flow properties.  J. Combin. Theory Ser. B 56 (1992), 165--182.

\bibitem{Kawahara2004}
Y. Kawahara. On matroids and Orlik-Solomon algebras. Ann. Comb. 8 (2004), 63--88.

\bibitem{Kochol2022}
M. Kochol. Polynomials counting nowhere-zero chains in graphs. Electron. J. Combin. 29 (2022), P1.19.

\bibitem{Kochol2024}
M. Kochol. Polynomials counting nowhere-zero chains associated with homomorphisms. Mathematics 12 (2024), 3218.

\bibitem{KRS1999}
W. Kook, V. Reiner, D. Stanton. A convolution formula for the Tutte polynomial. J. Combin. Theory Ser. B 76 (1999), 297--300.

\bibitem{Kung2004}
J. P. S. Kung. A multiplication identity for characteristic polynomials of matroids. Adv. in Appl. Math. 32 (2004), 319--326.

\bibitem{Kung2010}
J. P. S. Kung. Convolution-multiplication identities for Tutte polynomials of graphs and ma-troids. J. Combin. Theory Ser. B 100 (2010), 617--624.

\bibitem{LSZ2011}
H.-J. Lai, X. Li, Y. Shao, M. Zhan. Group connectivity and group colorings of graphs-a survey. Acta Math. Sin. (Engl. Ser.) 27 (2011), 405--434.

\bibitem{LY2006}
H.-J. Lai, X. Yao. Group connectivity of graphs with diameter at most 2. European J. Combin. 27 (2006), 436--447.

\bibitem{MS1989}
Y. I. Manin, V. V. Schechtman. Arrangements of hyperplanes, higher braid groups and higher bruhat orders. Algebraic Number Theory, Adv. Stud. Pure Math. vol. 17, Academic Press, Boston, Mass, 1989, pp. 289--308.

\bibitem{Orlik-Terao1992}
P. Orlik, H. Terao. Arrangements of Hyperplanes. Springer--Verlag, Berlin, 1992.

\bibitem{Oxley2011}
J. Oxley. Matroid Theory. Second edition, Oxford University Press, New York, 2011.

\bibitem{OW2019}
J. Oxley, S. Wang. Dependencies among dependencies in matroids. Electron. J. Combin. 26 (2019), No. 3. 46, 12pp.

\bibitem{Read1968}
R. C. Read. An introduction to chromatic polynomials. J. Combin. Theory 4 (1968), 52--71.

\bibitem{Reiner1999}
V. Reiner. An interpretation for the Tutte polynomial. European J. Combin. 20 (1999), 149--161.

\bibitem{Rota1964}
G.-C. Rota. On the foundations of combinatorial theory. I. Theory of M\"{o}bius functions. Z. Wahrscheinlichkeitstheorie und Verw. Gebiete, 2 (1964), 340--368.

\bibitem{Rota1971}
G.-C. Rota. Combinatorial theory, old and new. In: Actes du Congr\`{e}s International des Math\'{e}maticiens. Tome 3 (Nice, 1970), Gauthier-Villars, Paris, 1971, pp. 229--233.

\bibitem{Stanley2007}
R. P. Stanley. An introduction to hyperplane arrangements. In: E. Miller, V. Reiner, B. Sturmfels (Eds.), Geometric Combinatorics, IAS/Park City Math. Ser. vol. 13, Amer. Math. Soc. Providence, RI, 2007, pp. 389--496.

\bibitem{Stanley2012}
R. P. Stanley. Enumerative Combinatorics, Volumn I. Second Edition, Cambridge University Press, 2012.

\bibitem{Tutte1954}
W. T. Tutte. A contribution to the theory of chromatic polynomials. Canad. J. Math. 6 (1954), 89--91.

\bibitem{Tutte1967}
W. T. Tutte. On dichromatic polynomials. J. Combin. Theory 2 (1967), 301--320.

\bibitem{Wachs-Walker1986}
M. L. Wachs, J. W. Walker. On geometric semilattices. Order 2 (1986), 367--385.

\bibitem{Wang2015}
S. Wang. M\"obius conjugation and convolution formulae. J. Combin. Theory Ser. B 115 (2015), 117--131.

\bibitem{Welsh1976}
D. J. A. Welsh. Matroid Theory. Academic Press, London (1976). Reprinted (2010), Dover, Mineola.

\bibitem{Whitney1932}
H. Whitney. A logical expansion in mathematics. Bull. Amer. Math. Soc. 38 (1932), 572--579.

\bibitem{Whitney1932-1}
H. Whitney. The coloring of graphs. Ann. Math. 33 (1932), 688--718.

\bibitem{Whitney1935}
H. Whitney. On the abstract properties of linear dependence. Amer. J. Math. 57 (1935), 509--533.

\bibitem{Zaslavsky1989}
T. Zaslavsky. Biased graphs, I. Bias, balance, and gains. J. Combin. Theory Ser. B 47 (1989), 32--52.

\bibitem{Zaslavsky1995}
T. Zaslavsky. Biased graphs, III. Chromatic and dichromatic invariants. J. Combin. Theory Ser. B 64 (1995), 17--88.

\bibitem{Zaslavsky2003}
T. Zaslavsky. Biased graphs, IV. Geometrical realizations. J. Combin. Theory Ser. B 89 (2003), 231--297.
\end{thebibliography}
\end{document}